\newcommand\T{\rule{0pt}{2.7ex}}       
\newcommand{\tcblue}{\textcolor{blue}}
\newcommand{\svw}[1]{{\color{black}#1}}
\newcommand{\bK}{\mathbb{K}}
\newcommand{\bQ}{\mathbb{Q}}
\newtheorem{theorem}{Theorem}[section]
\newtheorem{lemma}[theorem]{Lemma}
\newtheorem{corollary}[theorem]{Corollary}
\newtheorem{proposition}[theorem]{Proposition}
\theoremstyle{definition}
\newtheorem{definition}[theorem]{Definition}
\newtheorem{example}[theorem]{Example}
\newtheorem{remark}[theorem]{Remark}
\newcommand{\poRI}{\preccurlyeq_{\mathcal{R}{\mathfrak{S}}^\ast _\alpha}}  
\newcommand{\poRIcover}{\prec_{\mathcal{R}{\mathfrak{S}}^\ast _\alpha}}
\newcommand{\poIcover}{\prec_{{\mathfrak{S}}^\ast _\alpha}} 
\newcommand{\pocover}{\prec} 
\newcommand{\po}{\preccurlyeq} 
\newlength{\cellsize}
\newcommand\tableau[1]{
\vcenter{
\let\\=\cr
\baselineskip=-16000pt
\lineskiplimit=16000pt
\lineskip=0pt
\halign{&\tableaucell{##}\cr#1\crcr}}}
\newcommand{\tableaucell}[1]{{%
\def \arg{#1}\def \void{}%
\ifx \void \arg
\vbox to \cellsize{\vfil \hrule width \cellsize height 0pt}%
\else
\unitlength=\cellsize
\begin{picture}(1,1)
\put(0,0){\makebox(1,1)[c]{$#1$}}
\put(0,0){\line(1,0){1}}
\put(0,1){\line(1,0){1}}
\put(0,0){\line(0,1){1}}
\put(1,0){\line(0,1){1}}
\end{picture}%
\fi}}
\DeclareMathOperator{\comp}{comp}
\DeclareMathOperator{\set}{set}
\newcommand{\dI}{\mathfrak{S}^*}
\newcommand{\rdI}{\mathcal{R}\mathfrak{S}^*}
\DeclareMathOperator{\QSym}{QSym}
\DeclareMathOperator{\Des}{Des}
\DeclareMathOperator{\DesI}{Des_{\dI}}
\DeclareMathOperator{\rw}{rw}
\DeclareMathOperator{\inv}{inv}
\DeclareMathOperator{\chr}{\mathrm{ch}}  
\DeclareMathOperator{\bA}{\overline{\mathcal{A}}} 
\newcommand{\Qsym}{\ensuremath{\operatorname{QSym}}}
\newcommand{\SIT}{\ensuremath{\operatorname{SIT}}}
\newcommand{\SET}{\ensuremath{\operatorname{SET}}} 
\newcommand{\NSET}{\ensuremath{\operatorname{NSET}}}
\newcommand{\pia}{\pi _i^a}
\newcommand{\hn}{H_n(0)}
\newcommand{\ninv}{\mathrm{inv}} 
\newcommand{\suchthat}{\;|\;}
\newcommand{\spam}{\operatorname{span}}
\DeclareFontFamily{U}{rcjhbltx}{}  
\DeclareFontShape{U}{rcjhbltx}{m}{n}{<->rcjhbltx}{}
\DeclareSymbolFont{hebrewletters}{U}{rcjhbltx}{m}{n}
\DeclareMathSymbol{\shin}{\mathord}{hebrewletters}{152}
\author[Lafreni\`ere, Orellana,  Pun, Sundaram, van Willigenburg, Whitehead McGinley]{Nadia Lafreni\`ere,  Rosa Orellana, Anna Pun, Sheila Sundaram, Stephanie van Willigenburg and Tamsen Whitehead McGinley}
\address{Nadia Lafreni\`ere: Concordia University, \svw{Montr\'{e}al, QC, Canada}} 
\email{nadia.lafreniere@concordia.ca}
\address{Rosa Orellana: Dartmouth College, Hanover, NH, USA}
\email{rosa.c.orellana@dartmouth.edu}
\address{Anna Pun: Baruch College, New York, NY, USA}
\email{annapunying@gmail.com}
\address{Sheila Sundaram: University of Minnesota, Minneapolis, MN, USA}
\email{shsund@umn.edu}
\address{Stephanie van Willigenburg: University of British Columbia, Vancouver, BC, Canada}
\email{steph@math.ubc.ca}
\address{\svw{Tamsen Whitehead {McGinley:}} Santa Clara University, Santa Clara, CA, USA}
\email{tmcginley@scu.edu}
\title[The skew immaculate Hecke poset]{The  skew immaculate Hecke poset and 0-Hecke modules}
\date{\today}
\begin{document}

\subjclass{05E05, 05E10, 06A07, 16T05, 20C08.}
\keywords{Branching rule,  dual immaculate function, {quasisymmetric function,} row-strict dual immaculate function, skew 0-Hecke  module, immaculate Hecke poset.}

\begin{abstract}{ The immaculate Hecke poset was introduced and investigated by  Niese, Sundaram, van Willigenburg, Vega and Wang, who established the full poset structure, and determined modules for the 0-Hecke algebra action on immaculate and row-strict immaculate tableaux.
 In this paper, we extend their results by introducing the skew immaculate Hecke poset. We investigate the poset structure, and construct  modules for the 0-Hecke algebra action on skew immaculate and skew row-strict immaculate tableaux, thus showing that the skew immaculate Hecke poset  captures representation-theoretic information analogous  to the immaculate Hecke poset.  We also describe branching rules for the resulting skew modules. }
\end{abstract}

\maketitle
\tableofcontents

\section{Introduction}

This work continues the investigations of two papers \cite{NSvWVW2023, NSvWVW2024}  concerned with  the discovery of the row-strict dual immaculate function, and the construction of a \svw{0-Hecke module} whose quasisymmetric characteristic is this function. All of this revolves around  standard immaculate tableaux, which were first defined by Berg, Bergeron, Saliola, Serrano and Zabrocki in \cite{BBSSZ2014}. Niese, Sundaram, van Willigenburg, Vega and Wang showed  in the paper \cite{NSvWVW2024}  that the poset induced by the action of the 0-Hecke algebra on standard immaculate tableaux  contains a wealth of information about various 0-Hecke modules and quotient modules whose quasisymmetric characteristics can be defined purely combinatorially. \svw{These quasisymmetric characteristics are}  the (row-strict) dual immaculate functions and the (row-strict) extended Schur functions.  By establishing that the immaculate Hecke poset is bounded, the authors were able to show in \cite{NSvWVW2024} that many of the associated modules are cyclic and  indecomposable.

The present paper  extends this program  to the 
 \emph{skew immaculate Hecke poset}, 
   the natural generalisation of the immaculate Hecke poset to \emph{skew} standard immaculate tableaux defined in \cite{BBSSZ2014} and \cite{NSvWVW2023}. 
The extended  Schur functions defined  by Assaf and Searles in \cite{AS2019},  and their   row-strict counterparts defined in \cite{NSvWVW2024}, also have skew versions.  We show  that the  skew immaculate Hecke poset is ranked 
and bounded.  
We then examine the module associated  to the skew row-strict dual immaculate function,  and the submodule associated to the skew 
 row-strict extended Schur function; these are  the skew analogues of the corresponding  functions  defined in \cite{NSvWVW2024}.  
  \svw{As in} \cite{NSvWVW2024}, the technical arguments are cast primarily in terms of the row-strict 0-Hecke modules.   By reversing the arrows in the skew immaculate Hecke poset,   we deduce analogous results  for  the modules associated to the skew dual immaculate  functions.   In contrast to the ordinary (non-skew) case in \cite{NSvWVW2024}, it turns out that the subposet of the skew immaculate Hecke poset corresponding to the skew standard extended tableaux  does not always have a unique minimal element for the row-strict dual immaculate action, and hence for this action   the skew extended 0-Hecke module  is in general not cyclic. 

The paper is organised as follows.  \Cref{sec:background} contains a summary of the necessary prerequisites, encapsulated in Table~\ref{table:All4Imm}. \Cref{sec:quasisym-charS} summarises the pertinent combinatorial information from \cite{NSvWVW2023} on skew immaculate tableaux and their quasisymmetric generating functions. Also defined here are the skew analogues of the extended and row-strict extended tableaux and (extended, row-strict extended) Schur \svw{functions.}  \Cref{sec:Skew-poset-moduleS} examines the immaculate and row-strict immaculate 0-Hecke actions on the set of skew standard immaculate tableaux, culminating in the construction of 0-Hecke skew modules (\Cref{{the:4skewmodules}}) whose quasisymmetric characteristics are the 
 generating functions described in \Cref{sec:quasisym-charS}. 
\Cref{sec:Skew-branching} follows Tewari and van Willigenburg \cite{TvW2015} to derive branching rules for the skew modules. Finally  \Cref{sec:Skew-Hecke-poset}  establishes that the skew immaculate Hecke poset is almost always bounded above and below,  thus giving cyclic generators (\Cref{thm-rdI-skew}, \Cref{thm-dI-skew}) for the appropriate 0-Hecke \svw{skew modules.}  The notable and interesting exception is the subposet of skew standard extended tableaux, which does not in general admit a unique bottom element, as can be seen in Figure~\ref{fig:SkewSETPoset}. 

\section{Background}\label{sec:background}

We now recall the relevant algebra and combinatorics that will be useful to us in due course.

\subsection{Quasisymmetric functions}\label{sec:quasisym}
We recall the pertinent definitions below, and refer the reader to \cite{LMvW2013} for further background on quasisymmetric functions.

A {\emph{composition}} of $n$ is a  sequence of positive  integers $\alpha = (\alpha_1, \alpha_2, \ldots,\alpha_k)$ summing to $n$. {For convenience we sometimes denote $k$ by $\ell(\alpha)$, and $n$ by $|\alpha|$.}
Write $\alpha\vDash n$ for a composition $\alpha = (\alpha_1, \alpha_2,\ldots,\alpha_k)$ of $n$, {and denote by $\emptyset$ the empty composition of 0}.

It is well known that compositions of $n$ are in bijection with subsets of $\{1,2,\ldots,n-1\} =[n-1]$. 
For a composition $\alpha\vDash n$, the corresponding set is $\set(\alpha) = \{\alpha_1,\alpha_1+\alpha_2,\ldots,\alpha_1+\cdots+\alpha_{k-1}\}$. 
Given a subset $S=\{s_1,s_2,\ldots,s_j\}$ of $\{1,\ldots,n-1\}$, the corresponding composition of $n$ is $\comp(S)=(s_1,s_2-s_1,\ldots,s_j-s_{j-1},n-s_j)$. 

A function $f\in \bQ[[x_1,x_2,\ldots]]$ is {\em quasisymmetric} if the coefficient of $x_1^{\alpha_1}x_2^{\alpha_2}\cdots x_k^{\alpha_k}$ is the same as the coefficient of $x_{i_1}^{\alpha_1}x_{i_2}^{\alpha_2}\cdots x_{i_k}^{\alpha_k}$ for every $k$-tuple of positive integers  $(\alpha_1, \alpha_2,\ldots,\alpha_k)$ and 
subscripts $i_1<i_2<\cdots <i_k$.  The set of all quasisymmetric functions forms a ring graded by degree, $\QSym = \bigoplus_{{n \geq0}} \QSym_n$, where each $\QSym_n$ is a vector space over the field  $\bQ$ of rationals, with bases indexed by compositions of $n$. 

Given a composition $\alpha=(\alpha_1,\alpha_2,\ldots,\alpha_k)$ of $n$, the {\em fundamental quasisymmetric function} indexed by $\alpha$ is 
\[F_\alpha = \sum_{\substack{i_1\leq i_2\leq \cdots \leq i_n\\i_j<i_{j+1} \text{ if } j\in\set(\alpha)}} x_{i_1}x_{i_2}\cdots x_{i_n}.\]
The set 
$\{F_\alpha:\alpha\vDash n\}$ is a basis for $\QSym_n$, called the \emph{fundamental} basis.

The {\em complement} of a composition $\alpha \vDash n$ \svw{is the composition} $\alpha^c= \comp(\set(\alpha)^c) \vDash n$.
 In $\QSym$ we have the involutive automorphism   $\psi,$ defined on the fundamental basis by
\begin{equation}
\psi(F_\alpha) = F_{\alpha^c} \label{eqn:psiF}.
\end{equation}

The \emph{diagram} of a composition $\alpha$ is a geometric 
depiction of $\alpha$  as left-justified cells, with 
$\alpha_i$ cells in the $i$th row from the bottom. {Given two compositions \svw{$\alpha, \beta$,} we say that $\beta \subseteq \alpha$ if $\beta _j \leq \alpha _j$ for all $1\leq j \leq \ell (\beta) \leq \ell (\alpha)$, and given $\alpha, \beta$ such that $\beta \subseteq \alpha$, the \emph {skew diagram}  $\alpha / \beta$ is the array of \svw{cells} in $\alpha$ but not $\beta$ when $\beta$ is placed in the bottom-left corner of $\alpha$.  } For example, the diagram of $\alpha={(4,2,3)}$  { and $\alpha/\beta$ where $\beta = (2,1)$ are the following, respectively.}
\[\tableau{\phantom{} &\phantom{} &\phantom{} \\\phantom{} &\phantom{}\\ \phantom{} &\textbf{} &\phantom{} & \phantom{}} \qquad {\tableau{\phantom{} &\phantom{} &\phantom{} \\  &\phantom{}\\   &  &\phantom{} & \phantom{}} }\]

\begin{definition}\cite[Definition 2.1]{BBSSZ2015}
Let $\alpha\vDash n$. An \emph{immaculate tableau} of {\emph{shape}} $\alpha$ is a filling $D$ of the cells of the diagram of $\alpha$ with positive integers such that 
\begin{enumerate}[itemsep=1pt]
\item {the} leftmost column entries strictly increase from bottom to top;
\item {the} row entries weakly increase from left to right. 
\end{enumerate}
A \emph{standard}  immaculate tableau  of shape $\alpha\vDash n$ is an immaculate tableau that is filled with the distinct entries $1,2,\ldots,n.$ \svw{A \emph{standard  extended tableau}  of shape $\alpha\vDash n$ is a standard immaculate tableau where the column entries in \emph{every} column strictly increase from bottom to top.}
\end{definition}

\begin{definition} \label{def:dIfunction}
The {\em dual immaculate function} indexed by $\alpha\vDash n$ is $\dI_\alpha = \sum_D x^D$, summed over all immaculate tableaux {$D$} of shape $\alpha$, \svw{with} {$x^D=x_1^{d_1}x_2^{d_2}\cdots $}, 
 \svw{where}  $d_i$ is the number of $i$'s in the tableau $D$.
\end{definition}
\begin{theorem}\label{the:qsbasis}\cite[Definition~2.3, Proposition~3.1]{BBSSZ2015}  
The set $\{\dI_\alpha\}_{\alpha\vDash n}$ is a basis for $\Qsym_n.$
Given a standard immaculate tableau $T$, its $\dI$-{\em descent set}  $\DesI(T)$ of $T$ is 
$$\DesI(T)=\{i: i+1 \text{ appears strictly above }i \text{ in } T \}.$$Then 
$$\dI_\alpha = \sum_{\svw{T}} F_{\comp(\DesI(T))}$$summed  over all standard immaculate tableaux {$T$} of shape $\alpha$.  
\end{theorem}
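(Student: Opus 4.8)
The plan is to prove the statement in two independent parts: first the fundamental expansion $\dI_\alpha = \sum_T F_{\comp(\DesI(T))}$, and then, by a separate triangularity argument, that $\{\dI_\alpha\}_{\alpha\vDash n}$ is a basis of $\Qsym_n$. For the expansion I would use standardization. Given an immaculate tableau $D$ of shape $\alpha$, I assign the labels $1,\ldots,n$ to its cells by processing the entries in increasing order and, among cells sharing a common entry, reading the higher rows first and proceeding left to right within a row; call the result $\stdz(D)$. The first thing to check is that $\stdz(D)$ is a \emph{standard} immaculate tableau: the leftmost column of $D$ has strictly increasing entries, so the labels increase up the first column, and within each row the labels increase left to right, so both immaculate conditions persist. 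One then groups the defining sum $\dI_\alpha = \sum_D x^D$ according to the value of $\stdz(D)$.

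The heart of the argument is to identify, for a fixed standard immaculate tableau $T$, the fibre $\stdz^{-1}(T)$. I claim the immaculate tableaux $D$ with $\stdz(D)=T$ are in bijection with the sequences of positive integers $i_1\le i_2\le\cdots\le i_n$ subject to $i_j<i_{j+1}$ whenever $j\in\DesI(T)$, the bijection placing $i_k$ in the cell of $T$ labelled $k$. Weak monotonicity is forced because $\stdz$ processes smaller entries first; strictness at a descent $j$ is forced because, when $j+1$ lies strictly above $j$ in $T$, the reading order would assign the smaller label to the higher cell if the two entries of $D$ agreed, contradicting $\stdz(D)=T$, while a non-descent imposes no such constraint. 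Granting the bijection, the inner sum $\sum_{D\in\stdz^{-1}(T)} x^D$ is exactly $F_{\comp(\DesI(T))}$ by the definition of the fundamental function (note $\set(\comp(\DesI(T)))=\DesI(T)$), and summing over $T$ gives the result.

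The step I expect to be the main obstacle is verifying that the inverse (destandardization) map really lands among immaculate tableaux, and concretely that the reconstructed first column strictly increases, a condition not visibly tied to consecutive descents. The key observation is that the row of the cell labelled $j+1$ in $T$ is strictly higher than that of the cell labelled $j$ exactly when $j\in\DesI(T)$, and is weakly lower otherwise. Hence for two first-column cells of $T$, with the lower labelled $p$ and the upper labelled $q$, the row strictly increases overall from $p$ to $q$, so there is at least one descent $j$ with $p\le j<q$. Combined with the weak chain $i_p\le\cdots\le i_q$ and the strict jump $i_j<i_{j+1}$ at that descent, this yields $i_p<i_q$, so the first column of $D$ is strict; weak increase along rows is immediate from $p<q$ implying $i_p\le i_q$. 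This closes the bijection.

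Finally, for the basis statement I would expand $\dI_\alpha$ in the monomial quasisymmetric basis $\{M_\beta\}_{\beta\vDash n}$: writing $K_{\alpha\beta}$ for the number of immaculate tableaux of shape $\alpha$ with $\beta_i$ entries equal to $i$, we have $\dI_\alpha = \sum_\beta K_{\alpha\beta} M_\beta$. Because the first column strictly increases and rows weakly increase, every entry of row $i$ is at least $i$; thus entries of value at most $k$ occupy only the first $k$ rows, forcing $\sum_{i\le k}\beta_i \le \sum_{i\le k}\alpha_i$ for all $k$. This confines the support to compositions $\beta$ dominated by $\alpha$, while the content $\beta=\alpha$ is realised by the single tableau whose $i$th row consists entirely of $i$'s, so $K_{\alpha\alpha}=1$. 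The transition matrix is therefore unitriangular with respect to a linear extension of dominance order, hence invertible, and $\{\dI_\alpha\}_{\alpha\vDash n}$ is a basis of $\Qsym_n$.
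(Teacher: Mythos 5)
The paper itself contains no proof of this statement: it is quoted as background, with the proof deferred to \cite{BBSSZ2015} (Definition~2.3, Proposition~3.1), so there is no internal argument to compare against. Your proposal is a correct, self-contained proof, and it takes a genuinely different route from the one the citation points to. In the literature the basis property comes essentially for free from duality: the $\dI_\alpha$ arise as the basis of $\Qsym$ dual to the immaculate basis of $\Nsym$ constructed via creation operators, and only the fundamental expansion requires a combinatorial argument (a standardization/descent analysis close to your Part~1). Your replacement of the duality step by the monomial expansion $\dI_\alpha=\sum_\beta K_{\alpha\beta}M_\beta$ together with unitriangularity in dominance order (using that every entry in row $i$ of an immaculate tableau is at least $i$, whence $K_{\alpha\beta}\ne 0$ forces partial-sum domination, and $K_{\alpha\alpha}=1$) is elementary and stays entirely inside $\Qsym$; what it gives up is the additional structure that the $\Nsym$ duality provides, but as a proof of this theorem it is complete in spirit and arguably more transparent.

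Two points deserve tightening. First, the two parts are not quite independent as claimed: writing $\dI_\alpha=\sum_\beta K_{\alpha\beta}M_\beta$ already presupposes that $\dI_\alpha$ is quasisymmetric, which you should either quote from Part~1 or obtain directly from the order-preserving relabeling bijection (values $i_1<\cdots<i_k$ replaced by $1,\ldots,k$), noting that this relabeling preserves both immaculate conditions. Second, in the fibre argument you verify that destandardization lands in immaculate tableaux, but you should also confirm that the reconstructed filling $D$ satisfies $\stdz(D)=T$: within a maximal run of equal values the labels of $T$ carry no $\DesI$-descents, so along the run the rows weakly decrease and, within a fixed row, the cells proceed left to right, which matches your tie-breaking order exactly. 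Both verifications are routine, and with them your argument is airtight.
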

For example, if $\alpha=(1,2),$ the unique standard immaculate tableau $\tableau{2&3\\1}$ 
 has $\dI$-descent set $\{1\}$; {thus} $\dI_{(1,2)}={F_{\comp( \{1\})}}=F_{(1,2)}.$
 \begin{definition}\cite{NSvWVW2023}\label{def:rdI} Let $\alpha\vDash n$.  A {\emph{row-strict immaculate tableau} of \emph{shape}} $\alpha$ is a filling $U$ of the diagram of $\alpha$ with positive integers such that 
\begin{enumerate}[itemsep=1pt]
\item {the} leftmost column entries weakly increase from bottom to top;
\item {the} row entries strictly increase from left to right.
\end{enumerate}
 {A \emph{standard}  row-strict immaculate tableau  of shape $\alpha\vDash n$ is a row-strict immaculate tableau that is filled with the distinct entries $1,2,\ldots,n.$}
 
The {\em row-strict dual immaculate function} indexed by $\alpha \vDash n$ is $ \rdI_\alpha = \sum_U x^U$   summed  over all row-strict immaculate tableaux $U$ of shape $\alpha$, \svw{with} {$x^U=x_1^{u_1}x_2^{u_2}\cdots $}, 
\svw{where}  {$u_i$} is the number of $i$'s in the tableau $U$. 
\end{definition}
Note that standard row-strict  immaculate tableaux coincide with  standard immaculate tableaux. 
We now establish some notation and definitions that include these tableaux.
\begin{definition}\label{def:SIT-SET-SIT*-NSET} Let $\alpha\vDash n$. 
\begin{enumerate}
    \item A {\emph{standard tableau} of \emph{shape}} $\alpha$ is an arbitrary bijective filling of the diagram of $\alpha$ with the distinct entries $\{1,2,\ldots,n\}$. 
    \item $\SIT(\alpha)$ is the set of standard immaculate tableaux of shape $\alpha$; i.e., the subset of standard tableaux where the {leftmost} column entries increase from bottom to top, and all row entries increase from left to right.
    \item $\SET(\alpha)$ \svw{is the set of all standard extended tableau of shape $\alpha$; i.e., the subset of $\SIT(\alpha)$ where all column entries increase from bottom to top.}
\end{enumerate}
 \end{definition}
  
We also have an analogous theorem to Theorem~\ref{the:qsbasis} for row-strict dual immaculate functions. 

\begin{theorem}\cite{NSvWVW2023} \label{thm:rsfunddecomp}  
The set $\{ \rdI_\alpha\}_{\alpha\vDash n}$ is a basis for $\QSym_n$. Given a standard immaculate tableau $T$, its  $\rdI$-descent set  $\Des_{\rdI}(T)$ of $T$ is $$\Des_{\rdI}(T)=\{i:i+1\text{ is weakly below } i \text{ in } T\}.$$Then $$\rdI_\alpha = \sum_{T} F_{\comp(\Des_{\rdI}(T))}$$ 
summed over all standard  immaculate tableaux {$T$} of shape $\alpha$.
\end{theorem}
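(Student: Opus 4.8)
The plan is to prove the fundamental expansion first by a standardization argument, and then to read off the basis property and the relation $\rdI_\alpha=\psi(\dI_\alpha)$ as formal consequences. The starting observation is that, as noted just before the statement, standard row-strict immaculate tableaux coincide with standard immaculate tableaux, so both $\dI_\alpha$ and $\rdI_\alpha$ are indexed by the same set $\SIT(\alpha)$. For a fixed $T\in\SIT(\alpha)$ and each $i\in[n-1]$, exactly one of ``$i+1$ is strictly above $i$'' and ``$i+1$ is weakly below $i$'' holds, so the two descent sets are complementary:
\[
\Des_{\rdI}(T)=[n-1]\setminus \DesI(T).
\]
Using $\set(\comp(D))=D$ together with the definition $\alpha^c=\comp(\set(\alpha)^c)$, this translates to the composition identity $\comp(\Des_{\rdI}(T))=(\comp(\DesI(T)))^c$, which is what will let me invoke \eqref{eqn:psiF} at the end.

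The core step is a direct proof that $\rdI_\alpha=\sum_{T\in\SIT(\alpha)}F_{\comp(\Des_{\rdI}(T))}$. I would define a standardization map $\stdz$ sending a row-strict immaculate tableau $U$ of shape $\alpha$ to a standard immaculate tableau $T$, by relabeling the entries $1,\dots,n$ in weakly increasing order and breaking ties among equal entries from the bottom row to the top row (this tie-break is forced and well defined, since strict rows already place equal entries in distinct rows). The claim to verify is that the row-strict immaculate tableaux $U$ with $\stdz(U)=T$ are exactly the fillings obtained from the cells of $T$ by a content $i_1\le i_2\le\cdots\le i_n$ subject to $i_j<i_{j+1}$ whenever $j\in\Des_{\rdI}(T)$. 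Summing $x_{i_1}\cdots x_{i_n}$ over all such contents gives precisely $F_{\comp(\Des_{\rdI}(T))}$ by definition of the fundamental basis, and summing over $T$ yields the expansion.

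The main obstacle is checking that $\stdz$ is genuinely a bijection: that every admissible content produces a valid row-strict tableau with no overcounting. Preservation of the weakly increasing leftmost column is immediate, since the column labels increase and the content is weakly increasing in the label index. The delicate point is row-strictness: if two cells in a common row of $T$ carried labels $d<d'$, then the constraints only force strict jumps at descents, so one must separately rule out $i_d=i_{d'}$. Here I would use a chain argument: if every consecutive pair between $d$ and $d'$ were a non-descent, then $d+1$ would be strictly above $d$, and so on up to $d'$ strictly above $d'-1$, placing $d'$ strictly above $d$ and contradicting their lying in a common row. Hence some intermediate position must be a descent, forcing $i_d<i_{d'}$ and keeping rows strict; this simultaneously validates $\stdz$ and shows no spurious fillings are counted.

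Finally, the basis property and the involution identity follow formally. Combining the fundamental expansion with the descent complementarity and \eqref{eqn:psiF} gives
\[
\rdI_\alpha=\sum_{T\in\SIT(\alpha)}F_{(\comp(\DesI(T)))^c}=\psi\!\left(\sum_{T\in\SIT(\alpha)}F_{\comp(\DesI(T))}\right)=\psi(\dI_\alpha),
\]
using \Cref{the:qsbasis} for the inner expansion. Since $\psi$ is a linear automorphism of $\QSym$ and $\{\dI_\alpha\}_{\alpha\vDash n}$ is a basis of $\QSym_n$ by \Cref{the:qsbasis}, its image $\{\rdI_\alpha\}_{\alpha\vDash n}=\{\psi(\dI_\alpha)\}_{\alpha\vDash n}$ is again a basis, which completes the argument.
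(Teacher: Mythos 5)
Your proposal is correct. Note that the paper contains no proof of this statement at all—it is imported from \cite{NSvWVW2023}—so there is nothing internal to compare against beyond the remark the authors make immediately afterwards; your argument is sound and follows the expected route: a standardization bijection (with ties among equal entries broken bottom-to-top, which is exactly the tie-break forced by the $\rdI$-descent condition, and whose validity your chain argument on consecutive non-descents correctly establishes in both directions) gives the expansion $\rdI_\alpha=\sum_T F_{\comp(\Des_{\rdI}(T))}$, and then the descent-complementarity identity $\Des_{\rdI}(T)=[n-1]\setminus\DesI(T)$ together with \eqref{eqn:psiF}—precisely the observation the paper records right after the theorem—yields $\rdI_\alpha=\psi(\dI_\alpha)$, so the basis property follows from \Cref{the:qsbasis} since $\psi$ is an automorphism.
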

For example, if $\alpha=(1,2),$ the unique standard immaculate tableau $\tableau{2&3\\1}$ 
has $\rdI$-descent set $\{2\}$; thus $\rdI_{(1,2)}={F_{\comp( \{2\})}}=F_{(2,1)}.$

Finally  observe that since the descent sets $\Des_{\dI}(T)$ and $\Des_{\rdI}(T)$ are complements of each other, \svw{by \eqref{eqn:psiF}} we have $$\rdI_\alpha=\psi(\dI_\alpha).$$

\subsection{The 0-Hecke algebra}\label{sec:0-Hecke-algebra}
Recall that the symmetric group $S_n$ can be defined via generators $s_i, 1\le i\le n-1,$ that satisfy\begin{center}${s_i}^2 =1;  \quad  s_i s_{i+1}s_i =s_{i+1}s_is_{i+1};\quad    s_is_j =s_js_i, \ |i-j|\ge 2.$\end{center}
\begin{definition}\cite{MathasHecke1999} Let $\bK$ be any field.  The {\emph{0-Hecke algebra}} $H_n(0)$ is the $\bK$-algebra with 
generators $\pi_i, 1\le i\le n-1$ and relations 
\begin{center}$ {\pi_i}^2 =\pi_i; \quad 
          \pi_i\pi_{i+1}\pi_i =\pi_{i+1}\pi_i\pi_{i+1};\quad
              \pi_i\pi_j =\pi_j\pi_i, \ |i-j|\ge 2.$\end{center}\end{definition}
              
The algebra $H_n(0)$ has dimension $n!$ over  $\bK$, with basis elements $\{\pi_\sigma: \sigma\in S_n\},$ where $\pi_\sigma=\pi_{i_1}\cdots \pi_{i_m}$ if $\sigma=s_{i_1}\cdots s_{i_m}$  is a reduced word.  This is well-defined by standard Coxeter group theory, see \cite{MathasHecke1999}.

It is known   \cite{PamelaBromwichNorton1979} that the 0-Hecke algebra {$H_n(0)$} admits  precisely $2^{n-1}$  simple modules $L_\alpha$, 
one for each composition $\alpha\vDash n$, and all {are} one-dimensional.  The module $L_\alpha$ is defined as $L_\alpha = \spam\{v_\alpha\}$ where for each $i=1, \ldots, n$, we have 
$\pi_i(v_\alpha)=\begin{cases} 0 , &  i\in \set(\alpha),\\
                                                v_\alpha, &\text{otherwise}.\end{cases}$

For the remainder of this paper we take the ground field to be the field  $\mathbb{C}$ of complex numbers.   

The well-known  Frobenius characteristic defined on the Grothendieck ring of the symmetric groups has the following analogue for finite-dimensional $\hn$-modules.  See  \cite{DKLT1996} for further details.  
\begin{definition} \cite[Section 5.4]{KrobThibon1997} Let $M$ be a finite-dimensional $\hn$-module;  let $M=M_1\supset M_2\supset \cdots \supset M_k\supset M_{k+1}=\{0\}$ be a composition series of submodules for $M$, where  each successive quotient $M_i/M_{i+1}$ is simple, and thus equal to $L_{\alpha^i}$ for some composition $\alpha^i\vDash n$. The \emph{quasisymmetric characteristic} of the module $M$, $\chr(M)$, is then defined to be the quasisymmetric function  equal to the sum of fundamental quasisymmetric functions $\sum_{i=1}^k F_{\alpha^i}$.
In particular $\chr(L_\alpha)=F_\alpha$ for each $\alpha\vDash n$.
\end{definition}

\subsection{0-Hecke modules  on   $\SIT(\alpha)$}\label{sec:0-Hecke-moduleS-straight}
We begin with a general proposition underlying the arguments in much of the literature on the \svw{subject.} 

\begin{proposition}\label{prop:poset-to-composition-serieS}  
Let $V$ be a 0-Hecke module with finite $\mathbb{C}$-basis $\mathcal{T}$, 
and suppose  $\hn$ acts on the basis $\mathcal{T}$ so that
 for each generator $\pi_i$ of $\hn$, either 
 $\pi_i(T)\in \mathcal{T}$ or $\pi_i(T)=0$.
 Define a  relation  on $\mathcal{T}$ by setting, for $S,T\in \mathcal{T}$,   \[S\po T\] if 
there is a sequence of $\hn$-generators $\pi_{s_1},\ldots, \pi_{s_r}$ such that \[T=\pi_{s_1}\cdots \pi_{s_r}(S).\] 
\svw{Suppose further that the relation $\po$ makes 
$\mathcal{T}$ into a partially ordered set}. 

\begin{enumerate}
    \item 
Let $T_1 \pocover^t\cdots \pocover^t T_m $ be an arbitrary  total order on $\mathcal{T}$ that extends the partial order defined by $\po$. 
Let $V_{T_{m+1}}=\{0\}$, and let $V_{T_i}$ be the $\mathbb{C}$-linear span 
\[V_{T_i}=\mathrm{span} \{T_j: T_i{\po^t }T_j\}, \quad 1\le i\le {m.}\]
Then each $V_{T_i}$ is an $\hn$-module, and the filtration 
\[\{0\}=V_{T_{m+1}}\subset V_{T_m}\subset\cdots \subset V_{T_1}=V\]
has one-dimensional, hence simple, quotient modules, i.e., it is a composition series for $V_{T_1}=V$. 
The quasisymmetric characteristic of $V$ is then 
\[\chr(V)=\sum_{i=2}^{m+1} \chr(V_{T_{i-1}}/V_{T_i}).\]
In particular, for each $i$,  $\chr(V_{T_{i-1}}/V_{T_i})$ is a fundamental quasisymmetric function $F_{\alpha^i}$ for some composition $\alpha^i\vDash n$.  
\item
 If the partial order $\po$ admits a unique least element $T_{bot}$, then the module $V=\mathrm{span} \{T_j\}_{j=1}^m$ is cyclic and generated by  $T_{bot}$.
 \item If the Hasse diagram of the partial order $\po$ is disconnected, then the module $V=\mathrm{span} \{T_j\}_{j=1}^m$ decomposes into a sum of at least two proper \svw{submodules.}
\end{enumerate}
\end{proposition}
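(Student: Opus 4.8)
The plan is to isolate a single monotonicity principle and derive all three parts from it. The principle: if $\pi_k(T)=T'\ne 0$ for a basis element $T\in\mathcal{T}$ and a generator $\pi_k$, then $T\po T'$ directly from the definition of $\po$ (via the length-one word $\pi_k$), and hence $T\po^t T'$ because $\po^t$ extends $\po$. So on basis vectors every generator either kills $T$ or moves it weakly upward in the total order. Everything below is a reading of this one observation.

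First I would establish (1). To see each $V_{T_i}=\spam\{T_j:T_i\po^t T_j\}$ is an $\hn$-submodule, I check closure under each $\pi_k$ on basis vectors: for $T_j\in V_{T_i}$, either $\pi_k(T_j)=0\in V_{T_i}$, or $\pi_k(T_j)=T_l$ with $T_j\po^t T_l$, so that $T_i\po^t T_j\po^t T_l$ and $T_l\in V_{T_i}$ by transitivity. Since $T_{i-1}\pocover^t T_i$ are adjacent in the total order, $V_{T_{i-1}}$ and $V_{T_i}$ differ by the single basis vector $T_{i-1}$, so each quotient $V_{T_{i-1}}/V_{T_i}$ is one-dimensional, spanned by the image $\bar T_{i-1}$.

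Next I would identify the quotients. On $\bar T_{i-1}$ a generator $\pi_k$ acts by $\overline{\pi_k(T_{i-1})}$: this equals $\bar T_{i-1}$ if $\pi_k$ fixes $T_{i-1}$, and equals $0$ otherwise---either $\pi_k(T_{i-1})=0$, or $\pi_k(T_{i-1})=T_l$ with $T_{i-1}\po^t T_l$, forcing $l\ge i$ and hence $T_l\in V_{T_i}$, so $\bar T_l=0$. Thus each $\pi_k$ acts as $0$ or $1$ on the one-dimensional quotient; any such scalar assignment satisfies the defining relations of $\hn$ and realizes the simple module $L_{\alpha^i}$ with $\set(\alpha^i)=\{k:\pi_k(T_{i-1})\ne T_{i-1}\}$. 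Hence the filtration is a composition series, and the definition of $\chr$ gives $\chr(V)=\sum_i\chr(V_{T_{i-1}}/V_{T_i})=\sum_i F_{\alpha^i}$, the asserted formula.

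Finally, parts (2) and (3) fall out of the same principle. For (2), a unique least element $T_{bot}$ satisfies $T_{bot}\po T$ for all $T\in\mathcal{T}$, so each $T=\pi_{s_1}\cdots\pi_{s_r}(T_{bot})$ lies in $\hn\cdot T_{bot}$; as $\mathcal{T}$ spans $V$ this forces $V=\hn\cdot T_{bot}$, i.e.\ $V$ is cyclic on $T_{bot}$. For (3), the connected components of the Hasse diagram partition $\mathcal{T}$ into nonempty classes in which elements of distinct classes are incomparable (a comparability $x\po y$ would yield a saturated chain of covers joining them, placing them in one component); writing $\mathcal{T}=\mathcal{T}_1\sqcup\mathcal{T}_2$ from two such classes, the principle shows each $\pi_k$ maps $\spam(\mathcal{T}_r)$ into itself, so $V=\spam(\mathcal{T}_1)\oplus\spam(\mathcal{T}_2)$ is a sum of two proper submodules. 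I expect the only genuine subtlety to be in (1)---confirming that the one-dimensional quotients carry exactly the $L_{\alpha^i}$-action, in particular that the case $\pi_k(T_{i-1})=T_l$ with $l>i-1$ contributes $0$ because $T_l$ has already been quotiented away---after which closure, cyclicity, and decomposition are immediate.
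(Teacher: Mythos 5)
Your proof is correct. The paper itself gives no in-text argument for this proposition: Part (1) is dispatched by citation to \cite[Section 5]{TvW2015} and \cite[Theorem~5.5]{NSvWVW2024}, and Parts (2) and (3) are simply asserted to follow from the definition of $\po$. Your argument---filter $V$ by the up-sets $V_{T_i}$ of a linear extension, observe that consecutive terms differ by one basis vector, and check that each generator acts by $0$ or $1$ on the one-dimensional quotients, identifying them as the simples $L_{\alpha^i}$ with $\set(\alpha^i)=\{k:\pi_k(T_{i-1})\ne T_{i-1}\}$---is exactly the standard argument used in those cited sources, so you have supplied precisely the details the paper outsources. One minor remark: your aside verifying that an arbitrary $0/1$ scalar assignment satisfies the $\hn$-relations is not logically needed (the quotient of a module by a submodule is automatically a module); its only role is to identify which simple module arises, which your description of $\set(\alpha^i)$ already does via the paper's definition of $L_\alpha$. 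Your handling of the potential subtleties---that $\pi_k(T_{i-1})=T_l$ with $l\ge i$ dies in the quotient, and that incomparability across Hasse components requires refining any relation to a saturated chain of covers in the finite poset---is exactly right.
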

\begin{proof} For Part (1), see, e.g., \cite[Section 5]{TvW2015} or \cite[Theorem~5.5]{NSvWVW2024}.  
Parts (2) and (3) \svw{follow} from the definition of the partial order $\po$.
\end{proof}

The following four  descent sets  were considered in \cite{NSvWVW2024} {for $T\in \SIT(\alpha)$}.  We have already encountered the first two;  the other two complete all variants.
\begin{enumerate}
    \item {\cite[Definition 3.20]{BBSSZ2014}} The {\emph{dual immaculate descent set} } 
    $$\Des_{\dI}(T)=\{i: i+1 \text{ is in a row strictly above $i$ in $T$}\};$$    
    \item {\cite[Defintion 3.4]{NSvWVW2023}} The {\emph{row-strict dual immaculate descent set }}
    $$\Des_{\rdI}(T)=\{i: i+1 \text{ is  in a row weakly below $i$ in $T$}\};$$
    \item {\cite[Definition 9.1]{NSvWVW2024}} The {\emph{$\mathcal{A}^*$-descent set}} $$\Des_{\mathcal{A}^*}(T)=\{i: i+1 \text{ is  in a row strictly below $i$ in $T$}\};$$
    \item {\cite[Definition 9.2]{NSvWVW2024}} The {\emph{$\bA^*$-descent set}} $$\Des_{\bA^*}(T)=\{i: i+1 \text{ is  in a row weakly above $i$ in $T$}\}.$$
\end{enumerate}

Each of these descent sets defines a 0-Hecke action of $\SIT(\alpha)$. 
The following theorems summarise the main results of \cite{NSvWVW2024}.  See {\cite{BBSSZ2015} for the impact of the $\dI$-action.}
 The remaining actions appear in  \cite{NSvWVW2024}, where a unified approach is given.

\begin{theorem}\label{thm:0-Hecke-action} {\cite{BBSSZ2015, NSvWVW2024}} Let $\alpha \vDash n$. Each of the four descent sets $\Des_{\dI}$, $\Des_{\rdI}$, $\Des_{\mathcal{A}^*}$, $\Des_{\bA^*}$  defines a cyclic $\hn$-module on $\SIT(\alpha)$.  
 Let $\pi^a$ denote any one of these four actions, and 
 denote the descent set of {$T\in \SIT(\alpha)$} in each case simply by $\Des_a(T)$. 
  Also let $s_i$ be the operator \svw{that} switches the entries $i$, $i+1$ in a tableau $T$.

In each case the action of a generator $\pi^a_i$ is defined as
\begin{equation}\label{eqn:defn-pi(T)}
\pi_i^a(T)=\begin{cases} T, &  i\notin \Des_a(T),\\
                        s_i(T), &  i\in \Des_a(T) \text{ and } s_i(T)\in \SIT(\alpha),\\
                        0, & otherwise.
 \end{cases}
 \end{equation} 
\end{theorem}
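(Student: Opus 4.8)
The plan is to verify, for each of the four statistics, that the operators $\pi_i^a$ of \eqref{eqn:defn-pi(T)} satisfy the defining relations of $\hn$, and then to extract the module structure and cyclicity from Proposition~\ref{prop:poset-to-composition-serieS}. I would treat all four actions in parallel, since they differ only in replacing ``strictly above'' by the appropriate one of ``weakly below'', ``strictly below'', ``weakly above'' in the descent condition, and the shape of every argument below is insensitive to this choice. The hypothesis of Proposition~\ref{prop:poset-to-composition-serieS} is immediate from \eqref{eqn:defn-pi(T)}, since $\pi_i^a(T)\in\{T,\,s_i(T),\,0\}$ and each of $T,s_i(T)$ (when the latter lies in $\SIT(\alpha)$) is a basis element. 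For idempotence $(\pi_i^a)^2=\pi_i^a$, the only case needing attention is $i\in\Des_a(T)$ with $s_i(T)\in\SIT(\alpha)$, where $\pi_i^a(T)=s_i(T)$: swapping the entries $i$ and $i+1$ interchanges their rows, hence reverses their relative vertical position, so $i\notin\Des_a(s_i(T))$ and $\pi_i^a$ fixes $s_i(T)$. For the far-commutation $\pi_i^a\pi_j^a=\pi_j^a\pi_i^a$ with $|i-j|\ge 2$, the transpositions $s_i$ and $s_j$ move disjoint pairs of entries and neither swap alters the descent status of the other pair, so the operators commute on each basis element.

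The main obstacle is the braid relation $\pi_i^a\pi_{i+1}^a\pi_i^a=\pi_{i+1}^a\pi_i^a\pi_{i+1}^a$, which I would prove by a finite case analysis on the relative rows of the three entries $i,i+1,i+2$ in $T$. For each configuration one computes both composite operators, tracking at every step whether the relevant swap produces a tableau still in $\SIT(\alpha)$ (in which case the swap is performed) or leaves $\SIT(\alpha)$ (in which case the operator returns $0$). The delicate point is precisely the interaction between the descent condition and the immaculate shape constraint: a swap that is admissible for the descent statistic may nonetheless violate the strictly increasing first column, or the required row behaviour, and it is the bookkeeping of these boundary cases that makes the verification nontrivial. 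This is exactly the computation performed in \cite{BBSSZ2015} for the $\Des_{\dI}$-action and extended uniformly to all four actions in \cite{NSvWVW2024}.

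Once the relations hold, $\SIT(\alpha)$ is a genuine $\hn$-module and the relation $\po$ of Proposition~\ref{prop:poset-to-composition-serieS} is well defined. To invoke Part~(2) I would first confirm that $\po$ is a partial order by exhibiting a monotone statistic, for instance $\sum_{e} e\cdot r(e)$ where $r(e)$ is the index of the row containing the entry $e$; a short computation shows this quantity strictly decreases under each nontrivial application of a generator for the $\Des_{\dI}$-action, strictly increases for the $\Des_{\rdI}$-action, and changes strictly in a fixed direction for each of the remaining two actions. Hence no nontrivial sequence of generator applications can return a tableau to itself, which gives antisymmetry. I would then identify the unique least element $T_{bot}$ for the given action, namely the canonical tableau from which every tableau of $\SIT(\alpha)$ is reachable by repeatedly applying admissible generators, and conclude from Proposition~\ref{prop:poset-to-composition-serieS}(2) that the module is cyclic, generated by $T_{bot}$. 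The four least elements differ according to the action, but in each case it is their uniqueness together with reachability that yields cyclicity; establishing the braid relation remains the one genuinely technical step.
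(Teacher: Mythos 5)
This theorem is quoted as background from \cite{BBSSZ2015, NSvWVW2024}; the paper itself gives no proof, so your proposal can only be measured against the arguments in those references, whose overall strategy (verify the $\hn$-relations for the operators in \eqref{eqn:defn-pi(T)}, then deduce the module structure and cyclicity from the poset via \Cref{prop:poset-to-composition-serieS}) your outline does follow. Your idempotence and far-commutation arguments are sound, and your monotone statistic $\sum_e e\cdot r(e)$ is a correct and rather clean route to antisymmetry of $\po$ (the references instead use the inversion number of the reading word as a rank function). Deferring the braid-relation case analysis to the cited computations is defensible, though you do not carry out what you yourself identify as a technical step.

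The genuine gap is in the cyclicity claim. You propose to ``identify the unique least element $T_{bot}$\dots namely the canonical tableau from which every tableau of $\SIT(\alpha)$ is reachable by repeatedly applying admissible generators.'' As stated this is circular: the existence of a single tableau from which everything is reachable is precisely the assertion to be proved, and it does not follow formally from $\po$ being a partial order --- a finite poset always has minimal elements, but nothing forces there to be only one. This is not a hypothetical worry: the present paper exhibits exactly this failure in the nearest possible setting, since the subposet of skew standard extended tableaux in \Cref{fig:SkewSETPoset} has three minimal elements and the corresponding skew module is not cyclic. What a complete proof requires is the explicit construction of the generator ($S^0_\alpha$ for the $\rdI$- and $\mathcal{A}^*$-actions, $S^{row}_\alpha$ for the $\dI$- and $\bA^*$-actions --- note there are two distinct generators among the four actions, not four) together with a straightening algorithm producing, for every $T\in\SIT(\alpha)$, an explicit chain of generators starting at $S^0_\alpha$ (respectively ending at $S^{row}_\alpha$); this is the content of \cite[Propositions~6.6 and~6.11]{NSvWVW2024}, whose skew analogues appear in this paper as \Cref{prop:bot-elt-skew} and \Cref{prop:top-elt-skew}. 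Without that construction your argument establishes only that $\SIT(\alpha)$ carries an $\hn$-module structure, not that the module is cyclic.
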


When the need arises to be more specific, we  will refer to the actions defined by $\pi_i^a$ as $a$-actions, where $a$ is one of $\dI$, $\rdI$, $\mathcal{A}^*$ or $\bA^*$.  

Next we define  \svw{special} tableaux so that we can concretely describe the poset in the theorem below and all  entries in Table~\ref{table:All4Imm} thereafter.

\begin{definition}\label{def:maxminels} For $\alpha \vDash n$ we define the following special standard tableaux in $\SIT(\alpha)$. 
\begin{itemize}
\item   \cite[Definition 6.3]{NSvWVW2024} $S^0_{\alpha}$ is the  tableau in which the cells of the leftmost column of $\alpha$ are filled  with 
$1,\ldots, \ell(\alpha)$, increasing bottom to top;  then the remaining cells \svw{are filled} by rows, \emph{top to bottom}, left to right with consecutive integers starting at $\ell(\alpha) +1$ and ending at $n.$  

\item  \cite[Definition 6.9]{NSvWVW2024} $S^{row}_{\alpha}$ is the \emph{row superstandard}  tableau  whose rows are filled left to right, \emph{bottom to top}, beginning with the bottom row and moving up, using the numbers $1,2, \ldots, n$ taken in consecutive order.  

\item \cite[Definition 7.7]{NSvWVW2024} $S^{col}_{\alpha}$ is the \emph{column superstandard}  tableau  whose columns are  filled   bottom to top,  left to right, beginning with the leftmost column and moving   right, using the numbers $1,2,\ldots, n$  taken in consecutive order.  
\end{itemize}
\end{definition}

\begin{example}\label{ex:posetminmax}
$$S^0_{(3,1,2)} = \tableau{3&4&5\\2\\1&6}  \qquad S^{row}_{(3,1,2)} = \tableau{4&5&6\\3\\1&2} \qquad S^{col}_{(3,1,2)} = \tableau{3&5&6\\2\\1&4}$$
\end{example}

\svw{The next theorem highlights the importance of these tableaux.}

\begin{theorem}\cite{NSvWVW2024}\label{thm:all4actions}
Each of the  four actions  in Theorem~\ref{thm:0-Hecke-action} defines a partial order on the set $\SIT(\alpha)$, and all four actions then define the same poset $P\rdI_\alpha$, {called} the \emph{immaculate Hecke poset}.

The poset $P\rdI_\alpha$ is graded and bounded with a unique minimal element $S^0_\alpha$ and a unique maximal element $S^{row}_\alpha$. The vector space spanned by the set $\SIT(\alpha)$ then becomes 
\begin{itemize}
    \item  \cite[Theorems 6.8 and 6.15]{NSvWVW2024} a cyclic indecomposable 0-Hecke module $\mathcal{V}_\alpha$ for the $\rdI$-action, with generator $S^0_\alpha$.  Its quasisymmetric characteristic is the row-strict dual immaculate function $\rdI_\alpha$;
    \item {\cite[Theorem 3.5 and Lemma 3.10]{BBSSZ2015}} a cyclic indecomposable 0-Hecke module $\mathcal{W}_\alpha$ for the $\dI$-action, with generator $S^{row}_\alpha$.  Its quasisymmetric characteristic is the dual immaculate function $\dI_\alpha$;
    \item \cite[Theorem 9.3]{NSvWVW2024} a \svw{cyclic} 0-Hecke module $\mathcal{A}_\alpha$ for the $\mathcal{A}^*$-action, with generator $S^0_\alpha$.  Its quasisymmetric characteristic is $\mathcal{A}^*_\alpha=\sum_{T\in\SIT(\alpha)} {F_{\comp(\Des_{\mathcal{A}^*}(T))}}$;
    \item \cite[Theorem 9.4]{NSvWVW2024} a \svw{cyclic}  0-Hecke module $\mathcal{\bA}_\alpha$ for the $\bA^*$-action, with generator $S^{row}_\alpha$.   Its quasisymmetric characteristic is $\mathcal{\bA}^*_\alpha=\sum_{T\in\SIT(\alpha)} {F_{\comp(\Des_{\mathcal{\bA}^*}(T))}}$.
\end{itemize}
\end{theorem}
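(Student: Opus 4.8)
The whole theorem is set up to be fed into \Cref{prop:poset-to-composition-serieS}, so the plan is to (i) check that each of the four relations is genuinely a partial order, (ii) identify the four posets, (iii) prove the common poset is graded and bounded, and (iv) read off the module statements. The engine for (i) and the grading in (iii) is a single statistic: for $T\in\SIT(\alpha)$ put $\ninv(T)=\#\{(a,b):a<b\text{ and }b\text{ lies in a strictly lower row than }a\}$. The key observation is that a \emph{nontrivial} application of any generator is a value-swap $T\mapsto s_i(T)$ exchanging $i$ and $i+1$, and that such a swap changes $\ninv$ by exactly $\pm1$: exchanging $i$ and $i+1$ fixes the unordered pair of their positions, so for every other value $c$ the combined contribution of $\{c,i\}$ and $\{c,i+1\}$ to $\ninv$ is unchanged, and only the pair $(i,i+1)$ flips, giving $-1$ when $i+1$ rises above $i$ (the $\rdI$- and $\mathcal{A}^*$-moves) and $+1$ in the reverse situation (the $\dI$- and $\bA^*$-moves). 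Strict monotonicity of $\ninv$ along nontrivial generators forbids cycles, so $\po$ is antisymmetric, hence a partial order, in all four cases; and since every cover changes $\ninv$ by exactly $1$, $\ninv$ is a rank function and the poset is graded.

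For (ii) I would argue that the four relations collapse to one poset. The sets $\Des_{\rdI}$ and $\Des_{\mathcal{A}^*}$ differ only at indices $i$ for which $i+1$ sits in the \emph{same} row as $i$; for such $i$ the swap $s_i$ destroys the increase along that row, so $s_i(T)\notin\SIT(\alpha)$ and the generator acts as $0$ (for $\rdI$) or as the identity (for $\mathcal{A}^*$). Either way no nontrivial move occurs, so the nontrivial swaps, and therefore the order relations, coincide; the same comparison handles $\dI$ and $\bA^*$. Finally a nontrivial $\rdI$-move $T\mapsto s_i(T)$ (with $i+1$ below $i$) is precisely the reverse of the nontrivial $\dI$-move $s_i(T)\mapsto T$ (now $i+1$ above $i$), so the $\dI$/$\bA^*$ order is the order-dual of the $\rdI$/$\mathcal{A}^*$ order. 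Thus all four yield the single poset $P\rdI_\alpha$, which I orient by the $\rdI$-action.

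For boundedness I use $\ninv$ once more. Its minimum value $0$ is attained exactly when every smaller value lies weakly below every larger one; this forces the rows, read bottom to top, to carry the consecutive blocks $1,\dots,\alpha_1$, then $\alpha_1+1,\dots,\alpha_1+\alpha_2$, and so on, which is exactly $S^{row}_\alpha$, the unique top element. Dually the bottom is the unique tableau of maximal $\ninv$: I would show that any $T\neq S^0_\alpha$ still admits a valid $\dI$ down-move, so that iterating such moves (legal since $\ninv$ strictly increases and is bounded) terminates at a unique minimal element, which one checks is the tableau $S^0_\alpha$ described in \Cref{def:maxminels}. The delicate part here is the bookkeeping that each proposed swap keeps the leftmost column strictly increasing and the rows increasing, i.e.\ stays in $\SIT(\alpha)$; establishing this, and with it the uniqueness of the bottom element, is the first genuine obstacle.

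Once $P\rdI_\alpha$ is known to be graded and bounded, the module statements follow from \Cref{prop:poset-to-composition-serieS}. Part (1) produces a composition series whose quotients are the $F_{\comp(\Des_a(T))}$, so $\chr(\mathcal{V}_\alpha)=\sum_T F_{\comp(\Des_{\rdI}(T))}=\rdI_\alpha$ by \Cref{thm:rsfunddecomp}, $\chr(\mathcal{W}_\alpha)=\dI_\alpha$ by \Cref{the:qsbasis}, and the $\mathcal{A}^*$- and $\bA^*$-characteristics are their defining sums. Part (2), applied to the unique least element in each orientation ($S^0_\alpha$ for the $\rdI$- and $\mathcal{A}^*$-actions, $S^{row}_\alpha$ for the $\dI$- and $\bA^*$-actions), gives cyclicity with the stated generators. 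The last and hardest point is indecomposability of $\mathcal{V}_\alpha$ and $\mathcal{W}_\alpha$, for which connectedness of the Hasse diagram (part (3) of \Cref{prop:poset-to-composition-serieS}) is necessary but not sufficient. I would instead show the socle is one-dimensional: $\spam\{S^{row}_\alpha\}$ is a submodule of $\mathcal{V}_\alpha$ since each $\pi_i^{\rdI}$ sends the maximal element to itself or to $0$, and the real content is to prove that \emph{every} element fixed-or-annihilated by all generators is a scalar multiple of $S^{row}_\alpha$; a simple socle then forces indecomposability, with the dual argument using $\spam\{S^0_\alpha\}$ for $\mathcal{W}_\alpha$. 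I expect this socle computation, together with the uniqueness of the bottom element above, to be the two steps requiring the most care.
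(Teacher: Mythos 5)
Your inversion-statistic framework is sound and does correctly establish the easier parts of the theorem: the $\pm 1$ change of $\ninv$ under any nontrivial swap (the pairs $\{c,i\}$, $\{c,i+1\}$ argument is right), hence antisymmetry and gradedness; the collapse of the four relations into one poset (the $\rdI$/$\mathcal{A}^*$ and $\dI$/$\bA^*$ descent sets differ only at same-row pairs, where both actions are trivial); the identification of $S^{row}_\alpha$ as the unique tableau with $\ninv=0$; and the deduction of the characteristics and of cyclicity from \Cref{prop:poset-to-composition-serieS} once boundedness is known. Note, however, that this paper does not prove the statement at all --- it is quoted from \cite{NSvWVW2024} and \cite{BBSSZ2015} --- so the relevant comparison is with the skew analogues proved here (\Cref{lem:consecutive-gens}, \Cref{prop:bot-elt-skew}, \Cref{prop:top-elt-skew}), whose proofs are declared identical to the non-skew originals. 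Measured against those, your proposal has two genuine gaps, both at exactly the points you flag but do not fill.

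First, the unique minimal element. Showing that every $T\neq S^0_\alpha$ admits a valid down-move is the real content of boundedness below, because $\dI$-moves, unlike the up-moves, can be blocked: $\pi_i^{\dI}(T)=0$ when $i$ and $i+1$ both lie in the leftmost column, so the existence of a pair $i$, $i+1$ with $i+1$ strictly above $i$ does not by itself produce a legal move. (Contrast the upward direction, where $i+1$ strictly below $i$ automatically makes $s_i(T)$ standard.) The paper's route is not a bare existence argument but an explicit straightening algorithm --- first force the leftmost column to agree with $S^0_\alpha$, then fix rows top to bottom, left to right, each stage realized by the consecutive-generator sequences of \Cref{lem:consecutive-gens} --- which simultaneously proves that $S^0_\alpha$ is the unique minimal element and exhibits the saturated chain showing $S^0_\alpha \poRI T$ for every $T$. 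Without this your claims ``bounded,'' ``$\mathcal{V}_\alpha=\langle S^0_\alpha\rangle$,'' and ``$\mathcal{A}_\alpha=\langle S^0_\alpha\rangle$'' are unsupported. Second, indecomposability. You propose to show that the socle of $\mathcal{V}_\alpha$ is $\spam\{S^{row}_\alpha\}$, i.e.\ that every vector on which each $\pi_i$ acts as $0$ or $1$ is a multiple of $S^{row}_\alpha$; this would indeed suffice, but it is itself a nontrivial theorem that you leave entirely unproved, and it is not how the cited sources argue --- \cite[Lemma 3.10]{BBSSZ2015} and \cite[Theorem 6.15]{NSvWVW2024} prove indecomposability by showing that every idempotent endomorphism is $0$ or the identity, examining its value on the cyclic generator. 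So on the two statements that carry the weight of the theorem (unique bottom element and indecomposability), your proposal substitutes an acknowledged placeholder for a proof, and in the second case the placeholder is a different, harder-to-verify claim rather than the standard argument.
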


 \svw{\Cref{table:All4Imm} gives a  summary of other useful  results in \cite{BBSSZ2015, NSvWVW2024} for quick reference later. Additionally some useful results in \cite{AS2019, CFLSX2014, NSvWVW2024, S2020} are given for the extended and row-strict extended Schur functions, which we discuss in the next section.}
\begin{table}[h!]
\caption{The four descent sets and resulting tableaux}
\begin{center}
\scalebox{0.8}{
\begin{tabular}{|c|c|c|c|c|l|}
\hline
{Tableaux $\rightarrow$}  &  $\dI_\alpha\ $  &  {$\rdI_\alpha$}& ${\mathcal{A}_\alpha^*}$ &${\bA_\alpha^*}$ \T \\
 &  Dual immaculate &  { Row-strict dual imm.} & $\chr(\mathcal{A}_\alpha)$ & $\chr(\bA_\alpha)$  \\
[2pt]\hline\hline
 \svw{{Leftmost} Col}  &strict $\nearrow$ &{weak $\nearrow$ } &{weak $\nearrow$ }&strict $\nearrow$\\
\svw{bottom to top} & & & &\\[2pt]\hline
 \svw{Rows} 
& weak $\nearrow$&{strict  $\nearrow$}&{weak $\nearrow$ } & strict $\nearrow$\\
 \svw{left to right} & & & & \\[2pt]\hline
Descents & $i$ such that & $i$ such that & $i$ such that & $i$ such that \\
  &$ i+1 \text{ strictly above } i$ & $i+1 \text{ weakly below } i$ 
&$  i+1 \text{ strictly below } i$ & $  i+1 \text{ weakly above } i$\\[2pt]\hline\hline
 $\pi_i(T)=T$ & $ i+1 \text{ weakly below } i$ 
& {$ i+1 \text{ strictly above } i$} &{$ i+1 \text{ weakly above } i$}  
                                                    &$ i+1 \text{ strictly below } i$
 \\[2pt]\hline
$\pi_i(T)=0$ & $i, i+1$ in  & {$i, i+1$ in same row}& NEVER & $i, i+1$ in {leftmost} \\
& {leftmost column} & & & column or  in same row \\[2pt]\hline
 $T, s_i(T)$ standard, &$ i+1 $\text{ strictly above }$ i$,& {$ i+1 \text{ strictly below } i$}  &  {same as $\rdI$}&  same as $\dI$\\[2pt]
and $\pi_i(T)=s_i(T)$ & $i,i+1$ NOT both  & \phantom{{ $i, i+1$ NOT in same row}} &&\\
&in {leftmost column} & & &\\[2pt]\hline\hline
 {Partial order} &Poset $\svw{P\dI_{\alpha}}$&{ Poset $\svw{P\rdI_{\alpha}}$} &{ Poset $\svw{P\rdI_{\alpha}}$} &Poset $\svw{P\dI_{\alpha}}$\\
 {on $\SIT(\alpha)$} &{$\simeq \svw{P\rdI_{\alpha}}$} & $=[S^{0}_\alpha, S^{row}_\alpha]$ & $=[S^{0}_\alpha, S^{row}_\alpha]$&$\simeq \svw{P\rdI_{\alpha}}$\\[2pt]\hline\hline
Cover relation & $S\!\poIcover\!  T\!$ & {$S\!\poRIcover\!  T$}  &&\\
& $\iff\!  S=\pi_i^{\dI}(T)$ &{$\!\iff\!  T=\pi_i^{\rdI} (S)$} &  {same as $\rdI$}&  same as $\dI$ 
\\[2pt]\hline
Module& top element:   & {bottom  element:} &{bottom  element:} &top element:\\
generated by & $\mathcal{W}_\alpha=\langle S^{row}_\alpha\rangle$ & {$\mathcal{V}_\alpha=\langle S^0_\alpha\rangle$} &$\mathcal{A}_\alpha=\langle S^0_\alpha\rangle$ 
&$ \bA_\alpha=\langle S^{row}_\alpha\rangle$ \\[2pt]\hline
 Indecomposable? & Yes & {Yes}&\svw{Unknown}&\svw{Unknown}\\[2pt]\hline\hline
Extended & $\mathcal{E}_\alpha$& {$\psi(\mathcal{E}_\alpha)=\mathcal{R}\mathcal{E}_\alpha$}& ${\mathcal{A}_{\SET({\alpha})}^*}$& 
 ${\bA_{\SET({\alpha})}^*}$  \T \\
Schur function &$\mathcal{E}_\lambda=s_\lambda$ &{$\mathcal{R}\mathcal{E}_\lambda=s_{\lambda^t}$} & &  \\[2pt]\hline
Module & cyclic $\langle S^{row}_\alpha\rangle$ & {{cyclic} $\langle S^{col}_\alpha\rangle$} &{\svw{cyclic} $\langle S^{col}_\alpha\rangle$} &\svw{cyclic} $\langle S^{row}_\alpha\rangle$  \T \\
 &indecomp. & {indecomp.} &   &   \\[2pt]
Basis {$\SET(\alpha)$} & quotient of $\mathcal{W}_\alpha$ & {submodule of $\mathcal{V}_\alpha$} & {submodule of $\mathcal{A}_\alpha$} & quotient of $\bA_\alpha$ \\[2pt]\hline
\end{tabular}
}
\end{center}
\label{table:All4Imm}
\end{table}

\section{Skew immaculate  tableaux and quasisymmetric functions}\label{sec:quasisym-charS}

{We now broaden our horizons, moving from diagrams to skew diagrams.}
\begin{definition}\label{def:skew-SIT} 
{Let $\alpha,\beta$ be compositions with $\beta \subseteq \alpha$. A \emph{skew standard immaculate tableau} of {\emph{shape}} $\alpha/\beta$ is a filling $T$ of the cells of the skew diagram $\alpha/\beta $ with distinct entries $1, 2, \ldots , |\alpha/\beta|=|\alpha|-|\beta|$ such that 
\begin{enumerate}[itemsep=1pt]
\item the leftmost column entries that belong to $\alpha$ but not $\beta$ increase from bottom to top;
\item the row entries  increase from left to right. 
\end{enumerate}
}
Let $\SIT(\alpha/\beta)$ denote the set {of}  skew standard  immaculate tableaux of shape $\alpha/\beta$.  
\end{definition}

{Example~\ref{ex:skewSIT-to-straightSIT} gives an example of a skew standard immaculate tableau.}

\begin{definition}\label{def:skew-SET}  {Let $\alpha,\beta$ be compositions with $\beta \subseteq \alpha$. Then} define $\SET(\alpha/\beta)$ to be the subset of $\SIT({\alpha/\beta})$ consisting of all tableaux $T$ where {all columns entries increase bottom to top,} \svw{that is, the set of all \emph{skew standard extended tableaux} of \emph{shape} ${\alpha/\beta}$.}\end{definition}

{Note that  \svw{when} $\beta=\emptyset$ we have that $\SIT({\alpha/\beta})$ and $\SET(\alpha/\beta)$ coincide with $\SIT({\alpha})$ and $\SET(\alpha)$.}

\begin{definition} Let $\alpha,\beta$ be compositions with {$\beta \subseteq \alpha$.} Then we define eight sets of {tableaux of shape $\alpha/\beta$} as follows.
 
\svw{In the definitions that follow, the phrase ``{entries} in the {leftmost} column of $\alpha$"  refers to those {entries in the} cells {of} the {leftmost} column of the {skew diagram} $\alpha/\beta$ {that belong to $\alpha$ but not $\beta$.}}
\begin{enumerate}
\item   $\mathcal{T}_{\alpha/\beta}({\text{1st col}<, \text{rows}\le})$ {is  the set of tableaux  with entries in the leftmost column of $\alpha$ strictly increasing  bottom to top, and row entries weakly increasing  left to right.} 
\item  $\mathcal{T}_{\alpha/\beta}({\text{1st col}\le, \text{rows}<})$ {is  the set of tableaux  with entries in the leftmost column of $\alpha$ weakly increasing  bottom to top, and row entries strictly increasing  left to right.}
\item
 $\mathcal{T}_{\alpha/\beta}({\text{cols}<, \text{rows}\le})$ {is  the set of tableaux  with column entries strictly increasing  bottom to top, and row entries weakly increasing  left to right.}
\item
 $\mathcal{T}_{\alpha/\beta}({\text{cols}\le, \text{rows}<})$ {is  the set of tableaux  with column entries weakly increasing  bottom to top, and row entries strictly increasing  left to right.}

\item   $\mathcal{T}_{\alpha/\beta}({\text{1st col}\le, \text{rows}\le})$ {is  the set of tableaux  with entries in the leftmost column of $\alpha$ weakly increasing  bottom to top, and row entries weakly increasing  left to right.}

\item  $\mathcal{T}_{\alpha/\beta}({\text{1st col}<, \text{rows}<})$ {is  the set of tableaux  with entries in the leftmost column of $\alpha$ strictly increasing  bottom to top, and row entries strictly increasing  left to right.}

\item   $\mathcal{T}_{\alpha/\beta}({\text{cols}\le, \text{rows}\le})$ {is  the set of tableaux  with column entries weakly increasing  bottom to top, and row entries weakly increasing  left to right.}

\item  $\mathcal{T}_{\alpha/\beta}({\text{cols}<, \text{rows}<})$ {is  the set of tableaux  with column entries strictly increasing  bottom to top, and row entries strictly increasing  left to right.}
\end{enumerate}
\end{definition}
Skew analogues of the dual immaculate function and the row-strict dual immaculate function were first introduced in \cite{BBSSZ2014} and \cite{NSvWVW2023} respectively and we define them with alternative characterizations and variants.
The first two items {in the theorem} below were established in \cite{NSvWVW2023}.  \svw{The remaining items are proved similarly to the first two items.  The cases $\beta=\emptyset$ were established in \cite[Proposition~7.24, Theorem~9.16]{NSvWVW2024}, and the third and fourth cases are called the \emph{extended Schur function} \cite{AS2019} and \emph{row-strict extended Schur function} \cite{NSvWVW2024}, respectively.} 

{Before we state our theorem, we need the following \svw{concept.} Given a filling $T$ of a (skew) diagram  with entries $\{1,2,\ldots\}$, let 
$x^T$ 
denote the monomial $x_1^{d_1} x_2^{d_2}\cdots$, 
where $d_i$ is the number of entries equal to \svw{$i$ in $T$. }}

\begin{theorem}\label{thm:8flavours-skew-tableaux}  {Let $\alpha, \beta$ be compositions with $\beta \subseteq \alpha$.}
\begin{enumerate}
\item Define the \emph{skew dual immaculate function} as $\dI_{\alpha/\beta}{=}\sum_{T\in\SIT(\alpha/\beta)}  F_{\comp({\Des_{\dI}}(T))}$.  Then 
\[
\dI_{\alpha/\beta}
=\sum_{T\in  \mathcal{T}_{\alpha/\beta}({\text{1st col}<, \text{rows}\le})}  x^T.\]
\item Define the \emph{row-strict skew dual immaculate function} as $\rdI_{\alpha/\beta}{=}\sum_{T\in\SIT(\alpha/\beta)}  F_{\comp({\Des_{\rdI}}(T))}$.  Then 
\[
\rdI_{\alpha/\beta}
=\sum_{T\in  \mathcal{T}_{\alpha/\beta}({\text{1st col}\le, \text{rows}<})}  x^T.\]
\item Define the \emph{skew extended Schur function} as $\mathcal{E}_{\alpha/\beta}{=}\sum_{T\in\SET(\alpha/\beta)}  F_{\comp({\Des_{\dI}}(T))}.$  Then 
\[\mathcal{E}_{\alpha/\beta}
=\sum_{T\in  \mathcal{T}_{\alpha/\beta}({\text{cols}<, \text{rows}\le})} x^T.\]
\item Define the \emph{row-strict skew extended Schur function} as $\mathcal{R}\mathcal{E}_{\alpha/\beta}{=}\sum_{T\in\SET(\alpha/\beta)}  F_{\comp({\Des_{\rdI}}(T))}$.  Then 
\[\mathcal{R}\mathcal{E}_{\alpha/\beta}
=\sum_{T\in  \mathcal{T}_{\alpha/\beta}({\text{cols}\le, \text{rows}<})}  x^T.\]

\item Define $\mathcal{A}^*_{\alpha/\beta}{=}
\sum_{T\in \SIT({\alpha/\beta})} F_{\comp(\Des_{\mathcal{A}^*}(T))}$.  Then 
\begin{equation*}\label{eqn:gf-A-skew}
\mathcal{A}^*_{\alpha/\beta}
=
\sum_{T\in  \mathcal{T}_{\alpha/\beta}({\text{1st col}\le, \text{rows}\le})}  x^T.
\end{equation*}
\item Define $\bA^*_{\alpha/\beta}{=}
\sum_{T\in \SIT({\alpha/\beta})} F_{\comp(\Des_{\bA^*}(T))}$.  Then 
\begin{equation*}\label{eqn:gf-barA-skew}
\bA^*_{\alpha/\beta}
=\sum_{T\in  \mathcal{T}_{\alpha/\beta}({\text{1st col}<, \text{rows}<})}  x^T.
\end{equation*}
\item Define ${\mathcal{A}_{\SET({\alpha/\beta})}^*}{=}\sum_{T\in \SET({\alpha/\beta})} F_{\comp(\Des_{\mathcal{A}^*}(T))}$.  Then 

\begin{equation*}\label{eqn:gf-A-SET-skew}
{\mathcal{A}_{\SET({\alpha/\beta})}^*}
=\sum_{T\in  \mathcal{T}_{\alpha/\beta}(\text{cols}\le, \text{rows}\le)}  x^T.
\end{equation*}
\item   Define ${\bA_{\SET({\alpha/\beta})}^*}{=}
\sum_{T\in \SET({\alpha/\beta})} F_{\comp(\Des_{\bA^*}(T))}$.  Then 
\begin{equation*}\label{eqn:gf-barA-SET-skew}
{\bA_{\SET({\alpha/\beta})}^*}
=\sum_{T\in  \mathcal{T}_{\alpha/\beta}({\text{cols}<, \text{rows}<}) } x^T.
\end{equation*}
\end{enumerate}
\end{theorem}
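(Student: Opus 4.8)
The plan is to prove all eight identities by a single standardization argument, reducing each to one fiberwise identity over the relevant set of standard tableaux. Fix a flavour, with associated descent statistic $\Des_a\in\{\Des_{\dI},\Des_{\rdI},\Des_{\mathcal{A}^*},\Des_{\bA^*}\}$, ambient standard set $\mathcal{S}\in\{\SIT(\alpha/\beta),\SET(\alpha/\beta)\}$, and monotonicity prescription $\mathcal{C}$ (the weak/strict conditions on rows and on the leftmost or all columns). First I would define a standardization map $\stdz$ sending each semistandard filling $U\in\mathcal{T}_{\alpha/\beta}(\mathcal{C})$ to a standard tableau of the same shape, reading the cells in increasing order of their entries and breaking ties among equal entries by a rule fixed per flavour. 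The whole theorem then follows once I establish, for every $T\in\mathcal{S}$, the fiber identity
\[
\sum_{U\,:\,\stdz(U)=T} x^U \;=\; F_{\comp(\Des_a(T))},
\]
since summing over $T\in\mathcal{S}$ produces the left-hand side of each stated equation while $\bigsqcup_T \stdz^{-1}(T)=\mathcal{T}_{\alpha/\beta}(\mathcal{C})$ produces the right-hand side.

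Second, I would verify the fiber identity by exhibiting the inverse (destandardization) explicitly. Recall $F_{\comp(D)}=\sum x_{i_1}\cdots x_{i_n}$, summed over $i_1\le\cdots\le i_n$ with $i_j<i_{j+1}$ exactly when $j\in D$. Given $T\in\mathcal{S}$, each such index sequence with $D=\Des_a(T)$ yields a filling $U$ by placing $i_j$ in the cell of $T$ carrying the standard value $j$; conversely, $\stdz(U)=T$ forces the value sequence of $U$, read in standard-value order, to be weakly increasing with strict increases exactly at $\Des_a(T)$. Thus the fiber identity is equivalent to the two assertions that $\stdz(U)$ has descent statistic precisely $\Des_a$ and that the conditions $\mathcal{C}$ on $U$ are equivalent to this index constraint. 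The cases $a=\dI,\rdI$ over $\SIT(\alpha/\beta)$ are exactly items (1),(2), proved in \cite{NSvWVW2023}; the cases $a=\dI,\rdI$ over $\SET(\alpha/\beta)$ recover the skew extended and row-strict extended Schur functions of \cite{AS2019,NSvWVW2024}; and the remaining four cases run along identical lines.

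Third, the core bookkeeping matches each descent statistic with its weak/strict pattern and selects the tie-break. The rule is dictated by the non-descent condition: when a non-descent places $i+1$ weakly below $i$ (the $\dI$ and $\bA^*$ flavours) one labels equal entries from the top downward, whereas weakly above (the $\rdI$ and $\mathcal{A}^*$ flavours) calls for labeling from the bottom upward, so that each pair of equal original entries becomes a non-descent of $\stdz(U)$; for the strict-row flavours this is unambiguous since equal entries never share a row, and for the weak-row flavours one labels left to right within a row. One then checks that this labeling keeps rows—and, in the $\SET$ cases, all columns—strictly increasing, so $\stdz(U)\in\mathcal{S}$, and reads the converse off the same description. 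I would also record the complementation symmetry $\psi(F_\gamma)=F_{\gamma^c}$ together with $\Des_{\rdI}(T)=\Des_{\dI}(T)^c$ and $\Des_{\bA^*}(T)=\Des_{\mathcal{A}^*}(T)^c$, which relates the fundamental expansions of items (1)/(2), (5)/(6) and (7)/(8) and serves to cross-check the computations, even though it does not by itself transport the monomial side.

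The subtlest point, and the main obstacle, will be the $\SET$ cases (3),(4),(7),(8) and the doubly-weak case (5), where equal entries may occupy cells in the same row and the same column simultaneously. There one must verify that a single consistent linear order on the equal-valued cells exists that both turns every such pair into a non-descent and leaves $\stdz(U)$ genuinely in $\mathcal{S}$—in particular that the \emph{full} column-strictness demanded by $\SET(\alpha/\beta)$ is correctly forced by the chosen descent statistic even when vertically adjacent cells of $T$ carry non-consecutive values. Concretely, for vertically adjacent cells of $T$ with values $p<q$ one must show the interval $[p,q-1]$ always meets $\Des_a(T)$, so that the destandardized column entries are strictly increasing; establishing this (that the descent constraints on the index sequence capture column-strictness, not merely adjacency of equal entries) is the crux. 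Once it is in hand, the fiber identity holds uniformly and all eight equalities follow.
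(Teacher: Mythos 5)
Your plan is correct, and it is essentially the standardization argument that underlies the results this theorem rests on; the contrast is that the paper itself supplies no argument: items (1) and (2) are quoted from \cite{NSvWVW2023}, the $\beta=\emptyset$ cases of the remaining items are quoted from \cite[Proposition~7.24, Theorem~9.16]{NSvWVW2024}, and the skew versions of items (3)--(8) are dispatched with the single phrase ``proved similarly to the first two items.'' So what your write-up buys is a uniform, self-contained proof of all eight cases at once, with the tie-breaking rule in each flavour correctly dictated by the non-descent condition (top-down for $\dI$ and $\bA^*$, bottom-up for $\rdI$ and $\mathcal{A}^*$, left-to-right within rows for the weak-row flavours); this is exactly the mechanism of the cited proofs, which your scheme extends rather than replaces.

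Two points are needed to close the argument. First, your ``crux'' lemma is stated only for vertically adjacent cells, but the strict-row flavours also need its row analogue: if $j<k$ occupy adjacent cells of the same row of $T$, then $[j,k-1]$ must meet $\Des_a(T)$, so that the destandardized row entries are strictly increasing. Second, neither version is a genuine obstacle, because both admit the same one-line proof: if $[p,q-1]\cap\Des_a(T)=\emptyset$, then applying the non-descent condition successively to $p,p+1,\ldots,q-1$ shows that the cell of $q$ lies weakly below that of $p$ (for $\Des_{\dI}$), strictly below (for $\Des_{\bA^*}$), or strictly above (for $\Des_{\rdI}$), which contradicts the actual position of $q$ in each case where strictness is demanded of the fillings: $q$ strictly above $p$ for a column pair, $q$ in the same row as $p$ for a row pair. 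Note also that the $\mathcal{A}^*$ flavour, items (5) and (7), never needs this lemma at all, since both of its filling conditions are weak --- there only the tie-break consistency matters --- so the concern you raise about item (5) evaporates. With the lemma and its row analogue inserted, your fiber identity $\sum_{U:\,\stdz(U)=T} x^U = F_{\comp(\Des_a(T))}$ holds in all eight cases, and the theorem follows by summing over $T$.
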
 

In the case where $\alpha$ is a \emph{partition} $\lambda$, namely the parts satisfy $\lambda_1\geq\lambda _2 \geq \cdots >0$ and $\beta = \emptyset$, \svw{\cite{AS2019}} showed that $\mathcal{E}_{\lambda}$ coincides with the Schur function $s_\lambda$, and \cite{NSvWVW2024} showed that $\mathcal{R}\mathcal{E}_{\lambda}$ coincides with the Schur functions $s_{\lambda ^t}$ where $\lambda ^t$ is the transpose of $\lambda$. Similarly we conclude the following, and refer the reader to \cite{LMvW2013} for necessary symmetric function definitions.  

\begin{corollary}\label{cor:skewextendeds} Let $\lambda, \mu$ be partitions with $\mu \subseteq \lambda$. Then 
\begin{enumerate}
\item $\mathcal{E}_{\lambda/\mu}= s_{\lambda / \mu}$,
\item $\mathcal{R}\mathcal{E}_{\lambda/\mu}= s_{(\lambda/\mu)^t}$, 
\end{enumerate}
where $s_{\lambda / \mu}$ is the skew Schur function indexed by the skew diagram $\lambda/\mu$ and $(\lambda/\mu)^t$ is the transpose of $\lambda/\mu$.
\end{corollary}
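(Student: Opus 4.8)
The plan is to identify the two tableau generating functions furnished by \Cref{thm:8flavours-skew-tableaux}(3) and (4) with the classical semistandard-tableau descriptions of skew Schur functions. The key point is that once $\alpha$ and $\beta$ are honest partitions, the combinatorial conditions defining the relevant sets $\mathcal{T}$ collapse to the ordinary semistandardness conditions, so that nothing beyond unwinding definitions is needed.

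For part (1), specialise $\alpha=\lambda$, $\beta=\mu$ with $\mu\subseteq\lambda$ partitions. In the bottom-to-top, left-justified convention used throughout, a skew partition shape $\lambda/\mu$ has contiguous columns, so the defining conditions of $\mathcal{T}_{\lambda/\mu}(\text{cols}<,\text{rows}\le)$ — columns strictly increasing from bottom to top, rows weakly increasing from left to right — are exactly the conditions for a semistandard Young tableau of shape $\lambda/\mu$. Hence \Cref{thm:8flavours-skew-tableaux}(3) reads
\[
\mathcal{E}_{\lambda/\mu}=\sum_{T\in\mathcal{T}_{\lambda/\mu}(\text{cols}<,\text{rows}\le)}x^T=\sum_{T\in\mathrm{SSYT}(\lambda/\mu)}x^T=s_{\lambda/\mu},
\]
the final equality being the combinatorial definition of the skew Schur function. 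This is the exact skew analogue of the non-skew statement $\mathcal{E}_\lambda=s_\lambda$ of \cite{AS2019}.

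For part (2), transposition across the main diagonal, $T\mapsto T^t$, is a weight-preserving involution sending the shape $\lambda/\mu$ to the conjugate shape $(\lambda/\mu)^t=\lambda^t/\mu^t$; it carries weakly increasing columns to weakly increasing rows and strictly increasing rows to strictly increasing columns. Thus $T\mapsto T^t$ is a weight-preserving bijection from $\mathcal{T}_{\lambda/\mu}(\text{cols}\le,\text{rows}<)$ onto $\mathrm{SSYT}((\lambda/\mu)^t)$, and \Cref{thm:8flavours-skew-tableaux}(4) gives $\mathcal{R}\mathcal{E}_{\lambda/\mu}=\sum_{T}x^T=s_{(\lambda/\mu)^t}$. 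Alternatively, and paralleling the non-skew argument, one observes that $\Des_{\dI}(T)$ and $\Des_{\rdI}(T)$ are complementary for every standard tableau, so that $\mathcal{R}\mathcal{E}_{\lambda/\mu}=\psi(\mathcal{E}_{\lambda/\mu})$ by \eqref{eqn:psiF}; combining this with part (1) and the standard fact that $\psi$ restricts on $\Sym$ to the involution $\omega$, $s_\nu\mapsto s_{\nu^t}$, again yields $\mathcal{R}\mathcal{E}_{\lambda/\mu}=\psi(s_{\lambda/\mu})=s_{(\lambda/\mu)^t}$.

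The only real content, and hence the main obstacle, is the bookkeeping in the identification step: for an arbitrary composition shape the geometric columns of a skew diagram need be neither contiguous nor bottom-justified, and then $\mathcal{T}_{\alpha/\beta}(\text{cols}<,\text{rows}\le)$ is not the semistandard tableau set and need not give a Schur function at all. It is precisely the hypothesis that $\lambda$ and $\mu$ are partitions with $\mu\subseteq\lambda$ that forces each column of $\lambda/\mu$ to be a contiguous run of cells, so that column-strictness in the sense of $\mathcal{T}$ agrees with the column-strictness of (skew) semistandard tableaux. Once this is checked, both parts follow immediately from the combinatorial definition of the skew Schur functions together with the routine transpose bijection (or the involution $\psi$).
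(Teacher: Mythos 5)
Your proof is correct and follows essentially the same route as the paper, which states this corollary as an immediate consequence of \Cref{thm:8flavours-skew-tableaux}(3),(4): once $\lambda$ and $\mu$ are partitions, the sets $\mathcal{T}_{\lambda/\mu}(\text{cols}<,\text{rows}\le)$ and $\mathcal{T}_{\lambda/\mu}(\text{cols}\le,\text{rows}<)$ are exactly the semistandard tableaux of shape $\lambda/\mu$ and (after transposing) of $(\lambda/\mu)^t$, giving the skew Schur functions. Your unwinding of the definitions, including the observation that columns of a skew partition shape are contiguous (which can fail for general composition shapes), is precisely the ``similarly we conclude'' argument the paper leaves implicit.
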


When $\ell(\alpha)=\ell(\beta)$, we have  simpler expressions for some of the quasisymmetric functions defined above. Here $h_n$ and $e_n$ are respectively the homogeneous and elementary symmetric functions.
\begin{corollary} Let $\alpha, \beta$ be compositions with $\beta \subseteq \alpha$ and $\ell(\alpha)=\ell(\beta)$. Then 
\begin{enumerate}
    \item $
    \dI_{\alpha/\beta}=\prod_i h_{\alpha_i-\beta_i}
    =\svw{{\mathcal{A}^*_{\alpha/\beta},}}
    $
    
    \item $
    \rdI_{\alpha/\beta}=\prod_i e_{\alpha_i-\beta_i} 
    =\svw{\bA^*_{\alpha/\beta}}
    .$
\end{enumerate}    
\end{corollary}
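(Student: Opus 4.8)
The plan is to exploit the hypothesis $\ell(\alpha)=\ell(\beta)$ to show that the skew diagram $\alpha/\beta$ has no cell in the leftmost column of $\alpha$, which collapses the four relevant tableau classes down to fillings whose rows are monotone, with no interaction between distinct rows. First I would record the key observation: since $\beta$ is a composition its parts are strictly positive, and $\ell(\beta)=\ell(\alpha)$ together with $\beta\subseteq\alpha$ forces $1\le\beta_i\le\alpha_i$ for every $i$. Hence in each row $i$ the cell in column $1$ belongs to $\beta$, so $\alpha/\beta$ contains no cell in the leftmost column of $\alpha$. Consequently every condition of the form ``entries in the leftmost column of $\alpha$ increase (weakly or strictly)'' is vacuous on fillings of $\alpha/\beta$.

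From this I would deduce the two middle equalities directly. Because the leftmost-column condition imposes nothing, we have
\[
\mathcal{T}_{\alpha/\beta}(\text{1st col}<,\text{rows}\le)=\mathcal{T}_{\alpha/\beta}(\text{1st col}\le,\text{rows}\le),
\qquad
\mathcal{T}_{\alpha/\beta}(\text{1st col}\le,\text{rows}<)=\mathcal{T}_{\alpha/\beta}(\text{1st col}<,\text{rows}<),
\]
so comparing items (1) and (5) of \Cref{thm:8flavours-skew-tableaux} gives $\dI_{\alpha/\beta}=\mathcal{A}^*_{\alpha/\beta}$, and comparing items (2) and (6) gives $\rdI_{\alpha/\beta}=\bA^*_{\alpha/\beta}$.

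It remains to identify the common generating functions with the claimed products. The crucial structural point is that none of these four descriptions imposes any column condition: after the leftmost-column clause is discarded, the only surviving requirement is monotonicity within each individual row, and the fillings counted carry no global distinctness constraint (entries may repeat). Thus a valid filling of $\alpha/\beta$ is precisely an independent choice of a monotone filling of each row, and $x^T$ factors as the product of the row monomials. For a single row of length $m=\alpha_i-\beta_i$, the weakly increasing fillings have generating function $\sum_{i_1\le\cdots\le i_m}x_{i_1}\cdots x_{i_m}=h_m$ and the strictly increasing fillings have generating function $\sum_{i_1<\cdots<i_m}x_{i_1}\cdots x_{i_m}=e_m$, these being the monomial definitions of the complete homogeneous and elementary symmetric functions. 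Summing over all fillings and using the independence of the rows, the generating functions factor as $\prod_i h_{\alpha_i-\beta_i}$ and $\prod_i e_{\alpha_i-\beta_i}$, which establishes both chains of equalities.

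There is essentially no hard step here; the one point deserving care is the justification of the row-by-row factorisation, namely that the classes $\mathcal{T}_{\alpha/\beta}$ allow repeated entries and impose no cross-row or column constraints once the (now vacuous) leftmost-column clause is removed. I would also note the degenerate rows with $\alpha_i=\beta_i$, which contribute the empty factor $h_0=e_0=1$ and are harmless. Both of these are routine verifications against the definitions rather than genuine obstacles.
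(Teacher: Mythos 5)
Your proposal is correct and takes essentially the same approach as the paper: the paper's proof likewise observes that $\ell(\alpha)=\ell(\beta)$ makes the leftmost-column condition vacuous (so only the row conditions survive), and then factors the generating function row by row into $\prod_i h_{\alpha_i-\beta_i}$ and $\prod_i e_{\alpha_i-\beta_i}$. Your write-up simply makes explicit the details the paper leaves implicit, namely that $\beta_i\ge 1$ forces every first-column cell into $\beta$, and that the resulting tableau classes from Theorem~\ref{thm:8flavours-skew-tableaux} coincide pairwise.
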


\svw{\begin{proof}Observe that because we have $\ell(\alpha)= \ell(\beta)$, the only condition we need to consider is that on the rows. Therefore,
$$\dI_{\alpha/\beta}=\mathcal{A}^*_{\alpha/\beta}=\prod _i \sum _{j_1\leq \cdots \leq j_{\alpha_i-\beta_i}} x_{j_1}\cdots x_{j_{\alpha_i-\beta_i}}= \prod_i h_{\alpha_i-\beta_i}.$$
Similarly,
$$ \rdI_{\alpha/\beta}={\bA^*_{\alpha/\beta}}=\prod _i \sum _{j_1< \cdots < j_{\alpha_i-\beta_i}} x_{j_1}\cdots x_{j_{\alpha_i-\beta_i}}=\prod_i e_{\alpha_i-\beta_i}.\qedhere$$
\end{proof}}

In \svw{Sections~\ref{sec:Skew-poset-moduleS} and~\ref{sec:Skew-Hecke-poset}} we will use the respective descent sets to construct modules  whose quasisymmetric characteristics are  each of the  functions listed in \Cref{thm:8flavours-skew-tableaux}.

The following result was proved in \cite{NSvWVW2023}, and will be useful later.
\begin{theorem}\label{thm:two-varS}\cite[Theorem~4.11]{NSvWVW2023}
Suppose we have {two} sets of variables, $X$ and $Y$, ordered so that the $X$ alphabet precedes the $Y$ alphabet.  Then 
\begin{equation}\label{eqn:sheila-dI-two-vars}\dI_\alpha(X,Y)= \sum_{\beta\subset \alpha}\dI_\beta (X)  \dI_{\alpha/\beta} (Y),\end{equation}
\begin{equation}\label{eqn:sheila-rdI-two-vars}\rdI_\alpha(X,Y)= \sum_{\beta\subset \alpha}\rdI_\beta (X)  \rdI_{\alpha/\beta} (Y).\end{equation}
\end{theorem}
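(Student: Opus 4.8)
The plan is to prove both identities by a single \emph{content-splitting} bijection, treating $X$ and $Y$ as one totally ordered alphabet in which every letter of $X$ precedes every letter of $Y$. I would give the argument in full for \eqref{eqn:sheila-dI-two-vars} and note that \eqref{eqn:sheila-rdI-two-vars} is entirely parallel, with ``strict'' and ``weak'' interchanged in the roles of rows and leftmost column. First I would rewrite the left-hand side combinatorially: by Definition~\ref{def:dIfunction}, $\dI_\alpha(X,Y)$ is the sum of $x^D$ over all fillings $D$ of the diagram of $\alpha$ by letters of the ordered alphabet $X\sqcup Y$ whose leftmost column strictly increases bottom to top and whose rows weakly increase left to right. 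Writing $D|_X$ and $D|_Y$ for the sub-fillings recording the $X$-letters and $Y$-letters, the monomial factors as $x^D = x^{D|_X}\,x^{D|_Y}$. For each such $D$, let $\beta_j$ be the number of $X$-letters in row $j$ and set $\beta=(\beta_1,\beta_2,\ldots)$.

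The heart of the proof is the claim that $\beta$ is always a composition with $\beta\subseteq\alpha$, and that $D\mapsto(D|_X,D|_Y)$ is a bijection onto pairs where $D|_X$ is an immaculate tableau of shape $\beta$ in $X$ and $D|_Y\in\mathcal{T}_{\alpha/\beta}({\text{1st col}<, \text{rows}\le})$ in $Y$. Since rows weakly increase and every letter of $X$ is smaller than every letter of $Y$, in each row the $X$-letters occupy exactly the leftmost $\beta_j$ cells, so the $X$-cells are left-justified. To see that $\beta$ is also bottom-justified, suppose some row $j$ has no $X$-letter while a higher row $j'>j$ does; then cell $(1,j)$ holds a $Y$-letter and cell $(1,j')$ holds an $X$-letter, so the leftmost column has a strictly smaller entry above a larger one, contradicting strict increase up that column. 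Hence the rows carrying $X$-letters are precisely rows $1,\ldots,\ell(\beta)$, so $\beta$ is a composition with $\beta\subseteq\alpha$; the immaculate conditions on $D$ restrict to give $D|_X$ immaculate of shape $\beta$ and $D|_Y\in\mathcal{T}_{\alpha/\beta}({\text{1st col}<, \text{rows}\le})$, the latter because the relevant leftmost-column cells of $\alpha/\beta$ form a subset of the strictly increasing leftmost column of $D$. Conversely, gluing any immaculate filling of shape $\beta$ in $X$ above-and-alongside any filling in $\mathcal{T}_{\alpha/\beta}({\text{1st col}<, \text{rows}\le})$ in $Y$ produces an immaculate filling of shape $\alpha$: rows weakly increase because $X<Y$, and the leftmost column strictly increases because its lower $\ell(\beta)$ entries come from $X$ (strictly increasing), its upper entries come from $Y$ (strictly increasing), and every $X$-letter precedes every $Y$-letter. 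This establishes the bijection.

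Summing $x^D = x^{D|_X}\,x^{D|_Y}$ over all $D$ and factoring along the bijection yields
\begin{equation*}
\dI_\alpha(X,Y)=\sum_{\beta\subseteq\alpha}\Bigl(\sum_{D_X} x^{D_X}\Bigr)\Bigl(\sum_{D_Y} x^{D_Y}\Bigr)=\sum_{\beta\subseteq\alpha}\dI_\beta(X)\,\dI_{\alpha/\beta}(Y),
\end{equation*}
where the inner sums run over immaculate tableaux of shape $\beta$ in $X$ and over $\mathcal{T}_{\alpha/\beta}({\text{1st col}<, \text{rows}\le})$ in $Y$, and the last equality invokes Definition~\ref{def:dIfunction} and Theorem~\ref{thm:8flavours-skew-tableaux}(1). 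For \eqref{eqn:sheila-rdI-two-vars} the identical argument applies, now using $\mathcal{T}_{\alpha/\beta}({\text{1st col}\le, \text{rows}<})$ and Theorem~\ref{thm:8flavours-skew-tableaux}(2): rows increase strictly so the $X$-letters are still left-justified, and the leftmost column increases only weakly—but since an $X$-letter can never weakly exceed a $Y$-letter lying below it, the bottom-justification of $\beta$ survives verbatim. I expect this bottom-justification step to be the main obstacle: it is exactly where the monotonicity of the leftmost column combines with the ordering $X<Y$ to force $\beta$ to be a genuine composition contained in $\alpha$, so that the right-hand sum is indexed by $\beta\subseteq\alpha$ rather than by arbitrary left-justified subdiagrams.
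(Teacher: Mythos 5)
Your proof is correct. For context: this paper does not prove the statement at all --- it is imported verbatim from \cite[Theorem~4.11]{NSvWVW2023} --- so there is no internal proof to compare against; your argument stands or falls on its own, and it stands. The content-splitting bijection is the natural mechanism for such two-alphabet identities (it is the same argument that yields $s_\lambda(X,Y)=\sum_\mu s_\mu(X)\,s_{\lambda/\mu}(Y)$ for Schur functions), and you verify exactly the points that need verifying: the $X$-cells in each row form a left-justified prefix because rows are monotone and $X<Y$; the rows containing $X$-letters are precisely the bottom rows $1,\dots,\ell(\beta)$, since otherwise the leftmost-column condition (strict for $\dI$, weak for $\rdI$) would force an $X$-letter to be at least as large as a $Y$-letter below it; the immaculate conditions restrict correctly to $D|_X$ and $D|_Y$; and the gluing converse works because $X<Y$ preserves both the row and the column conditions at the junction. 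Two remarks. First, your argument proves the identity for the \emph{filling generating functions}, so identifying the inner sums with $\dI_{\alpha/\beta}(Y)$ and $\rdI_{\alpha/\beta}(Y)$, which the statement defines via fundamental expansions over $\SIT(\alpha/\beta)$, genuinely requires \Cref{thm:8flavours-skew-tableaux}(1)--(2); this is legitimate here, since those items are established in \cite{NSvWVW2023} independently of the present theorem, but it is an external input your proof cannot do without, and you were right to cite it explicitly. Second, the index set written $\beta\subset\alpha$ in the statement must be read as $\beta\subseteq\alpha$ including the extreme cases $\beta=\emptyset$ and $\beta=\alpha$ (with $\dI_\emptyset=1$ and $\dI_{\alpha/\alpha}=1$), exactly as your bijection produces them; your use of $\subseteq$ is the correct reading.
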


\section{0-Hecke modules on  $\SIT(\alpha/\beta)$}\label{sec:Skew-poset-moduleS}
As described in Section~\ref{sec:background}, 
 for a composition $\alpha$, 
various 0-Hecke actions on the set of standard immaculate tableaux $\SIT(\alpha)$ of composition shape were considered in \cite{NSvWVW2024}. The goal of this section is to show that each of these actions  can be transferred in a canonical way to an action on  the set of standard immaculate tableaux $\SIT(\alpha/\beta)$  of shape $\alpha/\beta$, where $\alpha/\beta$ is a \emph{skew} diagram.

Let $\alpha\vDash n$, $\beta\vDash m$ with $\beta\subseteq \alpha$.
We describe a canonical identification of  the set $\SIT(\alpha/\beta)$ with a  subset of $\SIT(\alpha)$. We then use this map to transfer any action of $\hn$ on $\SIT(\alpha)$ to an  action of $H_{n-m}(0)$ on $\SIT(\alpha/\beta)$.

\begin{definition}\label{def:map-skew-to-straight}
For $\alpha \vDash n, \beta \vDash m$ and $\beta \subseteq \alpha$, define a map $\phi: \SIT(\alpha/\beta)\rightarrow\SIT(\alpha)$ as follows. If $\svw{T}\in \SIT(\alpha/\beta)$, then $\phi(T)$ is obtained from $T$ by
\begin{itemize}
\item  filling the  cells of $\alpha$ that are not in $\beta$ by adding $m$ to each entry of $T$, and then 
\item  filling the cells of $\alpha$ that are in $\beta$ with the integers $1,\ldots, m$ 
in row superstandard order, that is, with $1, 2, \ldots, \beta_1$ in the first (bottom) row of (the diagram of) $\beta$, $\beta_1+1, \ldots, \beta_1+\beta_2$ in the second row of $\beta$, and so on. 
\end{itemize}
\end{definition} 

\begin{example}\label{ex:skewSIT-to-straightSIT}  Let $\alpha=(2,2,3,2,4)\vDash 13$ and $\beta=(2,1,2)\vDash 5$.
\[\text{Let } T=\tableau{3& \svw{6}&7 &8\\ 2 & \svw{5}\\ \bullet &\bullet &1\\ \bullet  &4\\\bullet &\bullet}\in \SIT(\alpha/\beta).\quad \text{ Then } \phi(T)=\tableau{8& \svw{11} & 12 & 13\\7 & \svw{10}\\ {\bm 4} &{\bm 5} &6\\{\bm 3} & 9\\{\bm 1}&{\bm 2}} \in \SIT(\alpha).\]

\end{example}

Recall from \Cref{thm:all4actions} the  four actions defined on the bounded and ranked poset $P\rdI_\alpha$, defined  respectively by the four descent sets $\Des_{\dI}$, $\Des_{\rdI}$, $\Des_{\mathcal{A}^*}$, $\Des_{\bA^*}$.  
The descent sets for standard immaculate  tableaux of shape  $\alpha/\beta$ are defined in exactly the same manner as for the case $\beta=\emptyset$. In particular, we see that for each type of descent set, and any $T\in \SIT(\alpha/\beta)$, we have, thanks to the map $\phi$, that 
\[i\in \Des(T) \iff i+m\in \Des(\phi(T)).\]
Here we have used $\Des(T)$ to represent any of the four types of descent sets.

\begin{proposition}\label{prop:skew-action} Let $\alpha \vDash n, \beta \vDash m$ with $\beta \subseteq \alpha$. Each of the four $\hn$-actions $\pi^{\dI}$, $\pi^{\rdI}$, $\pi^{\mathcal{A}^*}$, $\pi^{\bA^*}$ on $\SIT(\alpha)$ gives an associated $H_{n-m}(0)$-action, defined by the respective descent set $\Des_{\dI}$, $\Des_{\rdI}$, $\Des_{\mathcal{A}^*}$, $\Des_{\bA^*}$, on the set of  tableaux  $\SIT(\alpha/\beta)$.   Let $\pi^a$ denote any one of these four actions. Denote the descent set of $T\in \SIT(\alpha/\beta)$ in each case by $\Des_a(T)$. Also let $s_i$ be the operator that switches the entries $i$, $i+1$ in a tableau $T$.

In each case the action of a generator $\pi_i^a$ is defined as 
\begin{equation}\label{eqn:defn-skew-pi(T)}
\pi_i^a(T)=\begin{cases} T, &  i\notin \Des_a(T)\\
                        s_i(T), &  i\in \Des_a(T) \text{ and } s_i(T)\in \SIT(\alpha/\beta),\\
                        0, & otherwise.
 \end{cases}
 \end{equation}
\end{proposition}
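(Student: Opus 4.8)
The plan is to realise the asserted $H_{n-m}(0)$-action as the restriction, transported through $\phi$, of the already-established $\hn$-action of \Cref{thm:0-Hecke-action}. First I would record that $\phi$ is injective and identify its image: $\phi(\SIT(\alpha/\beta))$ is exactly the set of those $S\in\SIT(\alpha)$ whose restriction to the cells of $\beta$ is the fixed row-superstandard filling by $1,\dots,m$. Indeed, given such an $S$, subtracting $m$ from every entry lying in $\alpha/\beta$ and deleting the $\beta$-cells returns a skew standard immaculate tableau, since the row condition and the leftmost-column condition for $\alpha/\beta$ are inherited from the corresponding conditions for $\alpha$; this provides an explicit inverse to $\phi$ on its image. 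Conversely $\phi(T)\in\SIT(\alpha)$ for every $T$, because the $\beta$-prefix of each row is consecutive and strictly smaller than the $\alpha/\beta$-entries, which all exceed $m$.

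The crux is then to show that the $\mathbb{C}$-span of $\phi(\SIT(\alpha/\beta))$ is stable under the generators $\pi^a_{m+1},\dots,\pi^a_{n-1}$ of $\hn$. For $1\le i\le n-m-1$, the operator $s_{i+m}$ only interchanges the entries $i+m$ and $i+m+1$, both of which are at least $m+1$ and hence occupy cells of $\alpha/\beta$; consequently $s_{i+m}$ never disturbs the $\beta$-cells, and $s_{i+m}(\phi(T))=\phi(s_i(T))$ as fillings whenever the right-hand side is defined. Combined with the displayed equivalence $i\in\Des_a(T)\iff i+m\in\Des_a(\phi(T))$ preceding the statement, this yields, for each of the four descent types, the identity
\[
\pi^a_{i+m}(\phi(T))=\begin{cases}\phi(T),& i\notin\Des_a(T),\\ \phi(s_i(T)),& i\in\Des_a(T)\text{ and } s_i(T)\in\SIT(\alpha/\beta),\\ 0,&\text{otherwise,}\end{cases}
\]
where one uses that $s_{i+m}(\phi(T))\in\SIT(\alpha)$ if and only if $s_i(T)\in\SIT(\alpha/\beta)$, since the $\beta$-part is untouched and the immaculate conditions left to verify all involve only cells of $\alpha/\beta$. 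In particular $\pi^a_{i+m}(\phi(T))$ is always either $0$ or again of the form $\phi(T')$, so the span is stable.

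Finally I would invoke the parabolic structure: the elements $\pi_{m+1},\dots,\pi_{n-1}$ generate a subalgebra of $\hn$ isomorphic to $H_{n-m}(0)$ under the relabelling $\pi_i\leftrightarrow\pi_{i+m}$, so the defining relations of $H_{n-m}(0)$ hold among $\pi^a_{m+1},\dots,\pi^a_{n-1}$ acting on $\SIT(\alpha)$ by \Cref{thm:0-Hecke-action}. Since these operators preserve the stable subspace $\operatorname{span}\phi(\SIT(\alpha/\beta))$ and $\phi$ is a linear isomorphism onto it, the conjugated operators $\phi^{-1}\circ\pi^a_{i+m}\circ\phi$ satisfy the same relations; by the displayed case analysis these conjugated operators are precisely the $\pi^a_i$ of \eqref{eqn:defn-skew-pi(T)}. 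Hence \eqref{eqn:defn-skew-pi(T)} defines an $H_{n-m}(0)$-action on $\SIT(\alpha/\beta)$, simultaneously for all four descent sets. I expect the main obstacle to lie in the second paragraph---checking that $s_{i+m}$ leaves the $\beta$-cells fixed and that standardness transfers across $\phi$ in each of the three cases and for each of the four descent conventions; once that bookkeeping is complete, preservation of the algebra relations is automatic from the parabolic embedding.
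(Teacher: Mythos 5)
Your proposal is correct and follows essentially the same route as the paper's proof: transport the four $\hn$-actions of \Cref{thm:0-Hecke-action} through the injection $\phi$, observe that each generator $\pi^a_{i+m}$ touches only entries $i+m,\,i+m+1$ (hence fixes the $\beta$-cells, so standardness transfers between $s_{i+m}(\phi(T))$ and $s_i(T)$), and inherit the 0-Hecke relations from the subalgebra generated by $\pi_{m+1},\dots,\pi_{n-1}$. Your write-up merely makes explicit some bookkeeping (the characterisation of the image of $\phi$ and the stability of its span) that the paper leaves implicit.
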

\begin{proof} This proposition will follow by examining the map $\phi$ of Definition~\ref{def:map-skew-to-straight}. First observe that $\phi$ is injective.  Moreover, given compositions $\beta\subseteq \alpha$,  we also have that any action of $\hn$ on $\SIT(\alpha)$   induces an associated action of  $H_{n-m}(0)$ on $\SIT(\alpha/\beta)$, defined for $1\le i\le n-m-1$ by 
\[\pi^a_i(T)= \pi^a_{i+m}( \phi(T)) \text{ for each } T\in \SIT(\alpha/\beta). \]
The generator $\pi^a_{i+m}$ affects only the entries $i+m, i+m+1$ in the tableau $\phi(T)$, leaving the entries in the cells of $\beta$ fixed. Also we know from Theorem~\ref{thm:0-Hecke-action} that the operators $\{\pi^a_{i+m}: 1\le i\le n-m-1\}$ on the set $\SIT(\alpha)$  satisfy the 0-Hecke relations. It follows that one does indeed have an action of $H_{n-m}(0)$ on $\SIT(\alpha/\beta)$.  
\end{proof}

Now consider either of the two actions $\pi^{\rdI}$ or $\pi^{\mathcal{A}^*}$. Recall from \cite{NSvWVW2024} that the partial order defined on $\SIT(\alpha)$  by the cover relation \[S\poRIcover T \text{ if } T=\pi _i^{\rdI}(S) \quad \textrm{for some $i$}\]
makes $\SIT(\alpha)$ into a ranked and bounded poset,  the immaculate Hecke poset, which is the poset of Theorem~\ref{thm:all4actions}.  Furthermore, the cover relation  $S \prec_{\mathcal{A}^\ast _\alpha} T$ determines exactly the same poset on $\SIT(\alpha)$. The map $\phi$  allows us to identify $\SIT(\alpha/\beta)$ with a subposet of $\SIT(\alpha)$.   
Hence we have the following proposition. 

\begin{proposition}\label{prop:skew-partial-order}  The cover relation defined by 
\[ S\prec_{\rdI _{\alpha/\beta}} T \text{ if and only if } \phi(T)=\pi_{i+m}^{\rdI}(\phi(S)) \text{ for some }i=1,2,\ldots, n-m-1,\] is a partial order on the elements of $\SIT(\alpha/\beta)$.

Moreover, the cover relation  
\[ S\prec_{\mathcal{A}^\ast _{\alpha/\beta}} T \text{ if and only if } \phi(T)=\pi_{i+m}^{\mathcal{A}^\ast}(\phi(S)) \text{ for some }i=1,2,\ldots, n-m-1,\]
determines the same poset structure on $\SIT(\alpha/\beta)$.
\end{proposition}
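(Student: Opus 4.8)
The plan is to transport the partial-order structure of the immaculate Hecke poset $P\rdI_\alpha$ on $\SIT(\alpha)$ down to $\SIT(\alpha/\beta)$ through the injective map $\phi$ of \Cref{prop:skew-action}. By definition, $S\prec_{\rdI_{\alpha/\beta}} T$ holds exactly when $\phi(T)=\pi_{i+m}^{\rdI}(\phi(S))$ for a single generator, which is precisely the assertion that $\phi(S)\poRIcover\phi(T)$ is a cover in $P\rdI_\alpha$. Thus $\phi$ carries each generating relation of $\prec_{\rdI_{\alpha/\beta}}$ to a cover of $P\rdI_\alpha$, and hence carries any chain $S_0\prec_{\rdI_{\alpha/\beta}}\cdots\prec_{\rdI_{\alpha/\beta}}S_k$ to a chain $\phi(S_0)\poRIcover\cdots\poRIcover\phi(S_k)$ in $P\rdI_\alpha$. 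In particular, the reflexive-transitive closure $\preccurlyeq_{\rdI_{\alpha/\beta}}$ satisfies $S\preccurlyeq_{\rdI_{\alpha/\beta}}T\Rightarrow\phi(S)\preccurlyeq_{\rdI_\alpha}\phi(T)$.

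To prove that $\preccurlyeq_{\rdI_{\alpha/\beta}}$ is a genuine partial order, I would only need to verify antisymmetry, since reflexivity and transitivity hold automatically for a transitive closure. Suppose $S\preccurlyeq_{\rdI_{\alpha/\beta}}T$ and $T\preccurlyeq_{\rdI_{\alpha/\beta}}S$. Applying $\phi$ to the two witnessing chains and using the previous paragraph yields $\phi(S)\preccurlyeq_{\rdI_\alpha}\phi(T)$ and $\phi(T)\preccurlyeq_{\rdI_\alpha}\phi(S)$ in $P\rdI_\alpha$. Since $P\rdI_\alpha$ is a poset by \Cref{thm:all4actions}, its order is antisymmetric, so $\phi(S)=\phi(T)$; injectivity of $\phi$ then forces $S=T$. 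This settles the first assertion.

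For the second assertion, I would establish that the $\rdI$- and $\mathcal{A}^\ast$-cover relations already coincide on all of $\SIT(\alpha)$, so that transporting them through $\phi$ gives identical relations on $\SIT(\alpha/\beta)$. The point is that a nontrivial cover $S'\prec T'$ in either action requires $\pi_j^a(S')=s_j(S')\in\SIT(\alpha)$. When $j$ and $j+1$ lie in the same row of $S'$, the swap $s_j(S')$ violates the left-to-right increase and so lies outside $\SIT(\alpha)$ for \emph{both} actions; in every remaining configuration ($j+1$ strictly below, resp.\ strictly above, $j$) the descent sets $\Des_{\rdI}$ and $\Des_{\mathcal{A}^\ast}$ agree on whether $j$ is a descent. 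Hence $\phi(T)=\pi_{i+m}^{\rdI}(\phi(S))\iff\phi(T)=\pi_{i+m}^{\mathcal{A}^\ast}(\phi(S))$ for all $S,T\in\SIT(\alpha/\beta)$ and all $i$, so the two cover relations, and therefore the two induced partial orders, coincide.

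The one place demanding care is the bookkeeping in the last paragraph, namely confirming that the same-row case yields no cover for either action and that the two descent sets agree off that case. This is exactly the coincidence of actions already recorded in \Cref{thm:all4actions} and \Cref{table:All4Imm} for straight shapes; and since $\phi$ merely adds the constant $m$ to every entry, it preserves each relevant relative position (same row, strictly above, strictly below), so no verification beyond the straight-shape case is required. Everything else is formal.
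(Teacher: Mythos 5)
Your proof is correct and takes essentially the same route as the paper, which treats the proposition as immediate from the preceding discussion: the injective map $\phi$ identifies $\SIT(\alpha/\beta)$ with a subposet of the immaculate Hecke poset $P\rdI_\alpha$, whose poset structure and the coincidence of the $\rdI$- and $\mathcal{A}^\ast$-cover relations are already known from \Cref{thm:all4actions} and \cite{NSvWVW2024}. Your write-up merely makes explicit the antisymmetry-via-injectivity argument and the same-row bookkeeping from \Cref{table:All4Imm} that the paper leaves implicit.
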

We call the common poset $\SIT(\alpha/\beta)$ defined by either of the two actions $\pi^{\rdI}$ or $\pi^{\mathcal{A}^*}$  the \emph{skew immaculate Hecke poset} for the skew shape $\alpha/\beta$, and denote it by $P\rdI_{\alpha/\beta}$.

The following lemma is the skew analogue of \cite[Lemma~6.2]{NSvWVW2024}.

\begin{lemma}\label{lem:sameposet-skew} Let $\alpha\vDash n$, $\beta\vDash m\le n$ with $\beta\subseteq \alpha$, and $S,T\in \SIT(\alpha/\beta).$ Then 
\[S\svw{\prec_{\dI _{\alpha/\beta}}}  T \iff S\svw{\prec_{\rdI _{\alpha/\beta}}}  T.\]
\end{lemma}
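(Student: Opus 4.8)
The plan is to unwind each cover relation into a concrete statement about a single transposition of consecutive entries together with a descent condition, and then to show that the two descent conditions agree precisely when that transposition yields a valid tableau. Using the defining identification via $\phi$, the descent-transfer property $i\in\Des(T)\iff i+m\in\Des(\phi(T))$, the injectivity of $\phi$, and the fact that swapping $i+m,i+m+1$ in $\phi(T)$ is the image under $\phi$ of swapping $i,i+1$ in $T$, I would first record that $S\prec_{\dI_{\alpha/\beta}}T$ holds if and only if there is an $i$ with $S=s_i(T)$, $s_i(T)\in\SIT(\alpha/\beta)$ and $i\in\Des_{\dI}(T)$; and symmetrically that $S\prec_{\rdI_{\alpha/\beta}}T$ holds if and only if there is an $i$ with $T=s_i(S)$, $s_i(S)\in\SIT(\alpha/\beta)$ and $i\in\Des_{\rdI}(S)$.

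Since $s_i$ is an involution and $S,T\in\SIT(\alpha/\beta)$ by hypothesis, the condition $S=s_i(T)$ is identical to the condition $T=s_i(S)$, so the underlying transposition is the same in both relations. It therefore suffices to prove that, for a fixed $i$ for which $T=s_i(S)$ is a valid tableau (so in particular $S\ne T$), one has $i\in\Des_{\dI}(T)\iff i\in\Des_{\rdI}(S)$. Writing $r_j^S$ and $r_j^T$ for the row of the entry $j$ in $S$ and in $T$, the swap gives $r_i^T=r_{i+1}^S$ and $r_{i+1}^T=r_i^S$. Then $i\in\Des_{\dI}(T)$ says $r_{i+1}^T>r_i^T$, i.e.\ $r_{i+1}^S<r_i^S$, whereas $i\in\Des_{\rdI}(S)$ says $r_{i+1}^S\le r_i^S$; the former plainly implies the latter.

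The main obstacle is the reverse implication, which amounts to ruling out the equality $r_{i+1}^S=r_i^S$, that is, that $i$ and $i+1$ occupy the same row of $S$. Here I would invoke the validity of the swap: rows of a (skew) standard immaculate tableau are strictly increasing, so if $i$ and $i+1$ were in a common row they would be adjacent with $i$ immediately to the left of $i+1$, and then $T=s_i(S)$ would place $i+1$ immediately to the left of $i$, violating the row condition and contradicting $T\in\SIT(\alpha/\beta)$. Hence $r_{i+1}^S<r_i^S$, giving $i\in\Des_{\dI}(T)$, and the two cover relations coincide. An alternative route is to apply the non-skew \cite[Lemma~6.2]{NSvWVW2024} directly to $\phi(S),\phi(T)\in\SIT(\alpha)$; this requires the extra observation that any generator realizing a cover between two elements of the image $\phi(\SIT(\alpha/\beta))$ must swap entries exceeding $m$ (since the cells of $\beta$ carry the fixed entries $1,\dots,m$ in row-superstandard order in every such image), so that it descends to one of the permitted generators $\pi^a_{i+m}$ on $\SIT(\alpha/\beta)$.
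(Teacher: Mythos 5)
Your proposal is correct and takes essentially the same approach as the paper: the paper omits the proof, saying only that via the map $\phi$ it is identical to the non-skew \cite[Lemma~6.2]{NSvWVW2024}, and your main argument---unwinding each cover into a single swap $T=s_i(S)$ plus a descent condition, then ruling out the case that $i,i+1$ share a row because row-strictness would make the swap invalid---is exactly that argument carried out in the skew setting. Your alternative route (applying the non-skew lemma to $\phi(S),\phi(T)$, after observing that any cover between image tableaux must use a generator $\pi_j$ with $j>m$ since the entries $1,\dots,m$ are frozen in the cells of $\beta$) is likewise just the reduction the paper has in mind.
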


Using the map $\phi$, one sees that the proof is identical to  \cite[Lemma~6.2]{NSvWVW2024} and is therefore omitted.
\begin{remark}\label{rem:dI-action and barA-action}
It follows then as in \cite{NSvWVW2024} that the two cover relations
 \[ S\prec_{\dI _{\alpha/\beta}} T \text{ if and only if } \phi(S)=\pi_{i+m}^{\dI}(\phi(T)) \text{ for some }i=1,2,\ldots, n-m-1,\]
\[ S\prec_{\mathcal{\bA}^\ast _{\alpha/\beta}} T \text{ if and only if } \phi(S)=\pi_{i+m}^{\mathcal{\bA}^\ast}(\phi(T)) \text{ for some }i=1,2,\ldots, n-m-1,\]
define the same poset structure on $\SIT(\alpha/\beta)$, which we denote by $P\dI_{\alpha/\beta}$. 
\end{remark}

Note that as a consequence of these definitions, which have been chosen to be consistent with the definitions in \cite{NSvWVW2024},  we get the following result.

\begin{proposition}\label{prop:isoposets}
Let $\alpha, \beta$ be compositions with $\beta \subseteq \alpha$. Then the posets  $P\dI_{\alpha/\beta}$ and $P\rdI_{\alpha/\beta}$ are isomorphic.
\end{proposition}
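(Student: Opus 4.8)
The plan is to exhibit an explicit order-reversing bijection between $P\dI_{\alpha/\beta}$ and $P\rdI_{\alpha/\beta}$, so that the two posets are anti-isomorphic, and then to observe that each is \emph{self-dual} as a consequence of the way the cover relations were set up. The cleanest route begins by recalling how the analogous isomorphism is obtained in the non-skew case in \cite{NSvWVW2024}. There the point is that the $\dI$-cover relation reverses the arrows of the $\rdI$-cover relation: by \Cref{lem:sameposet-skew} we have $S \prec_{\dI_{\alpha/\beta}} T \iff S \prec_{\rdI_{\alpha/\beta}} T$ as \emph{relations on the same underlying set}, but the two cover relations are defined with the roles of $S$ and $T$ interchanged (compare $\phi(T)=\pi_{i+m}^{\rdI}(\phi(S))$ in \Cref{prop:skew-partial-order} with $\phi(S)=\pi_{i+m}^{\dI}(\phi(T))$ in \Cref{rem:dI-action and barA-action}). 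So the first step is to make precise that these two statements say the orderings point in opposite directions.

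The key conceptual step is the complementation map. First I would define the involution $\theta$ on $\SIT(\alpha/\beta)$ that sends a tableau $T$ with entries $1,\ldots,N$ (where $N=|\alpha/\beta|=n-m$) to the tableau $\theta(T)$ obtained by replacing each entry $i$ by $N+1-i$ and then relabelling the cells appropriately so that the result is again a standard immaculate tableau of the same skew shape. The crucial computation is that this complementation converts a $\dI$-descent into an $\rdI$-descent: one checks that $i \in \Des_{\dI}(T)$ if and only if $N-i \in \Des_{\rdI}(\theta(T))$, because ``$i+1$ strictly above $i$'' becomes ``the relabelled pair is weakly below'' under reversal. This is the standard interaction between the involution $\psi$ on $\Qsym$ (recall $\rdI_\alpha = \psi(\dI_\alpha)$ from the excerpt) and the combinatorial complementation on tableaux, and it is where the bulk of the routine verification lives.

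Having established that $\theta$ swaps the two descent statistics, I would then verify that $\theta$ is an \emph{anti-automorphism} for the pair of orderings: $S \prec_{\dI_{\alpha/\beta}} T$ holds iff $\theta(T) \prec_{\rdI_{\alpha/\beta}} \theta(S)$ holds. Combining this anti-isomorphism $P\dI_{\alpha/\beta} \to P\rdI_{\alpha/\beta}$ with the observation that $\theta$ is itself an involution, together with \Cref{lem:sameposet-skew}, yields that $P\dI_{\alpha/\beta}$ and $P\rdI_{\alpha/\beta}$ coincide up to the relabelling $\theta$, and hence are isomorphic as posets. Since everything here is the skew analogue of arguments already carried out in \cite{NSvWVW2024}, and since the map $\phi$ from \Cref{def:map-skew-to-straight} faithfully embeds $\SIT(\alpha/\beta)$ as a subposet of $\SIT(\alpha)$ compatibly with all four actions, I expect the proof to reduce cleanly to the non-skew statement: the isomorphism of $P\dI_\alpha$ and $P\rdI_\alpha$ restricts to the relevant subposets because $\phi$ intertwines the skew and straight cover relations by construction.

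The main obstacle I anticipate is \emph{not} the order-theoretic bookkeeping but rather checking that the complementation $\theta$ genuinely lands back inside $\SIT(\alpha/\beta)$ for a skew shape — reversing entries in a skew immaculate tableau need not automatically respect the leftmost-column and row conditions, since the ``leftmost column'' for the skew shape is defined cell-by-cell relative to which cells belong to $\alpha$ but not $\beta$ (see the parenthetical convention in the excerpt). I would therefore want to route the argument through $\phi$ and the ambient straight-shape poset $\SIT(\alpha)$, where the corresponding complementation is already known to behave well, rather than attempting to define $\theta$ intrinsically on the skew shape; this sidesteps the delicate boundary behaviour by inheriting it from the non-skew case.
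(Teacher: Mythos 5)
Your proposal goes wrong at the very first step, in how you read the definitions, and the repair tool you invent does not exist. The cover relations for the two posets are \emph{defined} with the role-swap already built in: $S \prec_{\rdI_{\alpha/\beta}} T$ means $\phi(T)=\pi_{i+m}^{\rdI}(\phi(S))$ (\Cref{prop:skew-partial-order}), while $S \prec_{\dI_{\alpha/\beta}} T$ means $\phi(S)=\pi_{i+m}^{\dI}(\phi(T))$ (\Cref{rem:dI-action and barA-action}). That swap compensates exactly for the fact that the $\dI$-action moves \emph{down} the edges along which the $\rdI$-action moves \emph{up}, so both relations orient $\SIT(\alpha/\beta)$ in the \emph{same} direction. \Cref{lem:sameposet-skew} then states that the two cover relations literally coincide: $S \prec_{\dI_{\alpha/\beta}} T \iff S \prec_{\rdI_{\alpha/\beta}} T$. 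Two partial orders on the same finite set with the same cover relation are equal, so $P\dI_{\alpha/\beta}$ and $P\rdI_{\alpha/\beta}$ are the same poset and the identity map is the desired isomorphism. That is the paper's entire (implicit) proof, signalled by the phrase ``as a consequence of these definitions.'' Your claim that the definitions make ``the orderings point in opposite directions,'' which is what sends you hunting for an order-reversing bijection, is a misreading; you are solving a problem that the conventions were chosen precisely to eliminate.

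Independently of this, the central device of your argument fails. The complementation $\theta$ (replace each entry $i$ by $N+1-i$) is not well defined on $\SIT(\alpha/\beta)$, nor on $\SIT(\alpha)$: reversing entries turns increasing rows into decreasing rows, and you never specify the ``appropriate relabelling of cells'' that would repair this within the same shape. Your fallback --- route through $\phi$ because the complementation ``is already known to behave well'' on straight shapes --- appeals to a result that exists neither in this paper nor in \cite{NSvWVW2024}. The identity $\rdI_\alpha=\psi(\dI_\alpha)$ is not induced by any bijection on tableaux; it comes from the fact that one and the same tableau $T$ has complementary descent sets, $\Des_{\rdI}(T)$ being the complement of $\Des_{\dI}(T)$ (note also that this is set complementation, not the index reversal $i\mapsto N-i$ that your descent computation uses --- two different operations that you conflate). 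Finally, even if such an order-reversing bijection $P\dI_{\alpha/\beta}\to P\rdI_{\alpha/\beta}$ existed, concluding an isomorphism from it would additionally require the poset to be self-dual, which is proved nowhere, is not needed, and has no reason to hold in general.
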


By Proposition~\ref{prop:poset-to-composition-serieS} (1), Theorem~\ref{thm:8flavours-skew-tableaux} and Proposition~\ref{prop:skew-action} we \svw{obtain the next theorem.}

\begin{theorem}\label{the:4skewmodules} Let $\alpha \vDash n, \beta \vDash m$ with $\beta \subseteq \alpha$. Then we have the following.
\begin{itemize}
\item $\mathcal{V}_{\alpha/\beta} = \spam \{ T : T\in \SIT (\alpha / \beta) \}$ is an $H_{n-m}(0)$-module for the $\rdI$-action. Its quasisymmetric characteristic is $\rdI_{\alpha/\beta}$.
\item $\mathcal{W}_{\alpha/\beta} = \spam \{ T : T\in \SIT (\alpha / \beta) \}$ is an $H_{n-m}(0)$-module for the $\dI$-action. Its quasisymmetric characteristic is $\dI_{\alpha/\beta}$.
\item $\mathcal{A}_{\alpha/\beta} = \spam \{ T : T\in \SIT (\alpha / \beta) \}$ is an $H_{n-m}(0)$-module for the $\mathcal{A}^*$-action. Its quasisymmetric characteristic is $\mathcal{A}^*_{\alpha/\beta}$.
\item $\mathcal{\bA}_{\alpha/\beta} = \spam \{ T : T\in \SIT (\alpha / \beta) \}$ is an $H_{n-m}(0)$-module for the $\bA^*$-action. Its quasisymmetric characteristic is $\bA^*_{\alpha/\beta}$.

\end{itemize}
\end{theorem}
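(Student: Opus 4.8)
The plan is to assemble the three cited ingredients, the only genuinely new verification being the identification of the composition factors. First I would record that each of the four operators yields a bona fide module. By Proposition~\ref{prop:skew-action} every generator $\pi_i^a$ sends a basis element $T\in\SIT(\alpha/\beta)$ either to another basis element or to $0$, and by Proposition~\ref{prop:skew-partial-order} together with Remark~\ref{rem:dI-action and barA-action} the induced relation $S\po T$ (``$T$ is reachable from $S$ by applying generators'') is a partial order on $\SIT(\alpha/\beta)$: namely $P\rdI_{\alpha/\beta}$ for the $\rdI$- and $\mathcal{A}^*$-actions, and $P\dI_{\alpha/\beta}$ (with the reversed convention) for the $\dI$- and $\bA^*$-actions. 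These are exactly the hypotheses of Proposition~\ref{prop:poset-to-composition-serieS}(1), so in each case $\spam\{T:T\in\SIT(\alpha/\beta)\}$ is an $H_{n-m}(0)$-module carrying a composition series with one-dimensional quotients.

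Next I would pin down those quotients. Fix one of the four actions $\pi^a$, extend $\po$ to a total order $T_1\pocover^t\cdots\pocover^t T_m$, and consider the one-dimensional quotient spanned by the image of a fixed basis tableau $T=T_k$. On this quotient a generator $\pi_j^a$ acts as the identity when $j\notin\Des_a(T)$ (since then $\pi_j^a(T)=T$), and acts as $0$ when $j\in\Des_a(T)$: indeed either $s_j(T)\notin\SIT(\alpha/\beta)$, forcing $\pi_j^a(T)=0$, or $s_j(T)\in\SIT(\alpha/\beta)$, in which case $s_j(T)$ covers $T$ in the relevant partial order and hence lies strictly higher in the total order, so its image vanishes in the quotient. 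Comparing with the defining action on the simple module $L_\gamma$, this identifies the factor as $L_{\comp(\Des_a(T))}$, whose characteristic is $F_{\comp(\Des_a(T))}$.

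Summing over the composition series then gives $\chr(V)=\sum_{T\in\SIT(\alpha/\beta)}F_{\comp(\Des_a(T))}$ for the appropriate statistic $a\in\{\dI,\rdI,\mathcal{A}^*,\bA^*\}$. By the four relevant parts of Theorem~\ref{thm:8flavours-skew-tableaux} these sums are precisely $\rdI_{\alpha/\beta}$, $\dI_{\alpha/\beta}$, $\mathcal{A}^*_{\alpha/\beta}$ and $\bA^*_{\alpha/\beta}$, which settles all four bullet points simultaneously.

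I expect the one point needing care, the ``main obstacle'', to be the directional bookkeeping in the factor-identification step: checking that at a descent $j$ the swapped tableau $s_j(T)$ genuinely lands strictly above $T$ in the total order so that it vanishes in the quotient. This is immediate for the $\rdI$- and $\mathcal{A}^*$-actions, where applying $\pi_j$ ascends in $P\rdI_{\alpha/\beta}$ by the very definition of the cover relation; for the $\dI$- and $\bA^*$-actions one must instead run the argument against the reversed poset, which is legitimate since the reverse of a poset is again a poset, and the isomorphism of Proposition~\ref{prop:isoposets} keeps the two pictures consistent. Everything else is a direct invocation of the quoted results.
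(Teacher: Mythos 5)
Your proposal is correct and follows essentially the same route as the paper, whose entire proof is a single sentence citing Proposition~\ref{prop:poset-to-composition-serieS}(1), Theorem~\ref{thm:8flavours-skew-tableaux} and Proposition~\ref{prop:skew-action}. The explicit identification of each composition factor as $L_{\comp(\Des_a(T))}$, together with your care about running the argument on the reversed poset for the $\dI$- and $\bA^*$-actions, is precisely the content that the paper delegates to the references behind Proposition~\ref{prop:poset-to-composition-serieS}(1), so you have simply made the same argument self-contained.
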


We will sometimes refer to such modules as \emph{skew} modules to \svw{emphasise} their reliance on a skew diagram. 

\svw{Let }us now turn our attention to $\SET (\alpha/\beta)$ where all the column entries increase from bottom to top. Observe in Example~\ref{ex:skewSIT-to-straightSIT} that \svw{in fact $T\in \SET(\alpha/\beta)$, and so} we can have $T\in \SET(\alpha/\beta)$ but $\phi(T) \not\in \SET(\alpha)$. However, we do have the following result that gives us two more skew modules.

\begin{proposition}\label{prop:skew-SET-module} Let $\alpha \vDash n, \beta \vDash m$ with $\beta \subseteq \alpha$.

For the $\rdI$-action,  we have that $\spam \{ T : T\in \SET (\alpha / \beta) \}$  is an $H_{n-m}(0)$-submodule of  $\mathcal{V}_{\alpha/\beta}$ with quasisymmetric characteristic $\mathcal{R}\mathcal{E}_{\alpha/\beta}$.  

For the $\mathcal{A}^*$-action,  we have that $\spam \{ T : T\in \SET (\alpha / \beta) \}$  is an $H_{n-m}(0)$-submodule of  $\mathcal{A}_{\alpha/\beta}$ with quasisymmetric characteristic ${\mathcal{A}_{\SET({\alpha/\beta})}^*}$.
\end{proposition}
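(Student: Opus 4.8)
The proposition claims two things:
1. For the $\rdI$-action, the span of $\SET(\alpha/\beta)$ is a submodule of $\mathcal{V}_{\alpha/\beta}$ with characteristic $\mathcal{RE}_{\alpha/\beta}$.
2. For the $\mathcal{A}^*$-action, the span of $\SET(\alpha/\beta)$ is a submodule of $\mathcal{A}_{\alpha/\beta}$ with characteristic $\mathcal{A}^*_{\SET(\alpha/\beta)}$.

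**What "submodule" requires.**

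For each to be a submodule, I need to show that the action of each generator $\pi_i$ preserves the span of $\SET(\alpha/\beta)$. The action sends $T$ to either $T$, $s_i(T)$, or $0$. The first and last stay in the span trivially. So the key is: when $\pi_i(T) = s_i(T)$ (i.e., $i \in \Des(T)$ and $s_i(T) \in \SIT(\alpha/\beta)$), I need $s_i(T) \in \SET(\alpha/\beta)$ whenever $T \in \SET(\alpha/\beta)$.

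**The key observation.**

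$\SET$ is distinguished from $\SIT$ by requiring ALL columns (not just the leftmost) to strictly increase bottom to top. So I need: swapping $i$ and $i+1$ in $T \in \SET$ (in the allowed cases) cannot violate strict increase in any column.

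**The crucial point about column structure.**

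When we swap $i$ and $i+1$, they're in different cells. In $T$, since $T \in \SIT$, we have $i+1$ somewhere and $i$ somewhere. The column-strictness of $\SET$ means within each column entries increase. The swap can only affect columns containing $i$ or $i+1$.

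Let me think about the $\rdI$-action specifically. Here $i \in \Des_{\rdI}(T)$ means $i+1$ is weakly below $i$ (in a lower or equal row). And $s_i(T) \in \SIT$ means the swap preserves immaculate conditions.

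**Why the swap preserves $\SET$ — my conjecture.**

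The reason should be: in a standard tableau with distinct entries, $i$ and $i+1$ are consecutive. If they are in different columns, swapping them affects column-strictness only within their respective columns. Since they differ by 1 and are consecutive integers, a swap can only create a column violation if, say, $i$ and $i+1$ were in the same column. But if $i+1$ is weakly below $i$ and they're in the same column, then $i+1$ below $i$ with $i+1 < i$... wait, that's impossible in a column-strict tableau from $\SET$ (where the smaller is below). Actually if both are in the same column and column is strictly increasing bottom-to-top, then $i$ must be below $i+1$, contradicting $i+1$ weakly below $i$ (unless same cell, impossible). So when $i \in \Des_{\rdI}(T)$ and $T \in \SET$, $i$ and $i+1$ are in DIFFERENT columns. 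Swapping entries in different columns cannot affect whether EACH column is increasing, EXCEPT it changes which value sits in each of the two cells. I need to verify the swap doesn't break the local column conditions at those two cells.

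This is the technical heart, and it parallels exactly the straight-shape argument in \cite{NSvWVW2024} (Table~\ref{table:All4Imm}, where $\SET$ modules are established as submodule/quotient).

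**My proof plan.**

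The plan is to use \Cref{prop:poset-to-composition-serieS} together with the transfer map $\phi$ and reduce everything to the straight case already handled in \cite{NSvWVW2024}.

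First I would establish the submodule property. By \Cref{prop:skew-action}, the $\rdI$- and $\mathcal{A}^*$-actions on $\SIT(\alpha/\beta)$ send each basis element to $T$, $s_i(T)$, or $0$, so to show $\spam\{T : T \in \SET(\alpha/\beta)\}$ is closed it suffices to verify that whenever $T \in \SET(\alpha/\beta)$ and $\pi_i^a(T) = s_i(T) \in \SIT(\alpha/\beta)$, in fact $s_i(T) \in \SET(\alpha/\beta)$. The content here is the preservation of the extra column-strictness conditions in $\SET$ under the swap $s_i$. I would argue this directly on the skew tableau: since the descent condition forces $i$ and $i+1$ into distinct columns (for the $\rdI$-action, $i+1$ weakly below $i$ together with column strictness rules out them sharing a column; the $\mathcal{A}^*$ case is analogous), swapping them can only affect column-strictness within the two columns containing $i$ and $i+1$, and a routine check on the cells immediately above and below each of these two entries shows the strict increase is preserved. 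This is the step I expect to be the main obstacle, as it requires carefully ruling out every way a swap could violate the extra $\SET$ column conditions; but it mirrors the straight-shape verification in \cite{NSvWVW2024}, and via $\phi$ the local configurations of entries $i+m, i+m+1$ in $\phi(T)$ match those of $i, i+1$ in $T$ exactly, so the same case analysis applies.

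Having shown closure, \Cref{prop:poset-to-composition-serieS}(1) applies to the restricted action on $\SET(\alpha/\beta)$: any linear extension of the induced partial order gives a composition series whose successive quotients are the one-dimensional $L_{\comp(\Des_a(T))}$, one for each $T \in \SET(\alpha/\beta)$. Summing the fundamental quasisymmetric characteristics $F_{\comp(\Des_a(T))}$ over $T \in \SET(\alpha/\beta)$ yields exactly the generating functions $\mathcal{RE}_{\alpha/\beta}$ and $\mathcal{A}^*_{\SET(\alpha/\beta)}$ by parts (4) and (7) of \Cref{thm:8flavours-skew-tableaux}. Finally, since $\SET(\alpha/\beta) \subseteq \SIT(\alpha/\beta)$ and the action is the restriction of the ambient one, the span sits inside $\mathcal{V}_{\alpha/\beta}$ (respectively $\mathcal{A}_{\alpha/\beta}$) as a genuine $H_{n-m}(0)$-submodule, completing the proof.
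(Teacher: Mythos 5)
Your proposal is correct and follows essentially the same route as the paper's proof: reduce to the case $\pi_i^a(T)=s_i(T)$, use column-strictness of $\SET(\alpha/\beta)$ together with the descent condition to force $i$ and $i+1$ into different rows and different columns, observe that the swap then preserves all row and column conditions, and obtain the quasisymmetric characteristics from \Cref{prop:poset-to-composition-serieS} and parts (4) and (7) of \Cref{thm:8flavours-skew-tableaux}. Your auxiliary remark about transferring the case analysis through $\phi$ is unnecessary (the paper argues directly on the skew tableau, as you also do), but it is harmless since you never claim $\phi(T)\in\SET(\alpha)$, which would be false in general.
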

\begin{proof}
In order to prove that we have a submodule, we must show that $\SET (\alpha/\beta)$ is closed with respect to our chosen 0-Hecke action, namely if $T\in\SET(\alpha/\beta)$ then $\pi_i^a(T)\in\SET(\alpha/\beta)$ for our 0-Hecke action $\pi^a$.

Let $\pi_i^a$ be a generator for either our $\rdI$-action or $\mathcal{A}^*$-action as described in Proposition~\ref{prop:skew-action}. If $\pia (T) = T$ or $0$ the $\pia (T) \in \SET (\alpha/\beta)$ and we are done. 

Hence, it remains to check the case where
$$\pia(T) = s_i(T).$$
By Proposition~\ref{prop:skew-action} this will be when $i\in \Des(T)$, and by Table~\ref{table:All4Imm} this will be precisely when $i+1$ is strictly below $i$ because we are considering the $\rdI$-action or $\mathcal{A}^*$-action.

Since $T\in \SET (\alpha/\beta)$ we are guaranteed that $i$ and $i+1$ will be in different columns because the entries in every column of $T$ increase from bottom to top by definition. Hence in $T$ we will see one of the following.
$$\tableau{&&i\\ \\ \scriptstyle{i+1}}\qquad \textrm{or}\qquad\tableau{i&&\\ \\&&\scriptstyle{i+1}}$$
Now observe that in each of these cases switching $i$ and $i+1$ does not affect the increasing from left to right row condition, nor the increasing from bottom to top column condition. Hence $\pia (T) \in \SET (\alpha/\beta)$ in this final case, and we are done. 
 \end{proof}

\section{Branching rules}\label{sec:Skew-branching}
Throughout this section, unless explicitly mentioned to the contrary, by 0-Hecke action we will mean the $\rdI$-action of $\hn$.  In particular, for ease of comprehension in this section \svw{we will write $\pi_i(T)$ for the $\rdI$-action $\pi^{\rdI}_i(T)$}. 

The goal of this section is to establish a  branching rule for the 0-Hecke module $\mathcal{V}_{\alpha}$ with respect to the $\rdI$-action, using the  modules $\mathcal{V}_{\alpha/\beta}$ established in Section~\ref{sec:Skew-poset-moduleS}.     An analogous branching rule for the 0-Hecke module afforded by standard reverse composition tableaux was established by Tewari and van Willigenburg  \cite[Theorem~9.10]{TvW2015}. 
 Their argument can be adapted to our case, as we now show. 

The following explicit identification of  $H_m(0)\otimes H_{n-m}(0)$ as a subalgebra of $\hn$ is described in \cite[Th\'eor\`eme 1 (iii)]{DKLT1996}, and  used in \cite{TvW2015}.
Define $H_{m, n-m}(0)$ to be the subalgebra of $\hn$ generated by the set 
\[\{\pi_1,\ldots, \pi_{m-1},  \pi_{m+1},\ldots,  \pi_{n-1}\}.\]
 The module $H_m(0)\otimes H_{n-m}(0)$  is generated by the set 
 \[\{\pi_1\otimes 1,\ldots, \pi_{m-1}\otimes 1, 1\otimes \pi_{1},\ldots, 1\otimes \pi_{n-1-m}\},\]
where 1 is the unit of the 0-Hecke algebra.  The identification is made 
 by means of the map 
\begin{equation}\label{eqn:submodule-for-induction}
\pi_i\mapsto \begin{cases} \pi_i\otimes 1,& 1\le i\le m-1, \\
                                            1\otimes \pi_{i-m}, & m+1\le i \le n-1.
                       \end{cases}
\end{equation}

We will prove the following theorem.

\begin{theorem}\label{thm:Skew-branch} Let $\alpha\vDash n$. 
The restriction of the $\hn$-module $\mathcal{V}_{\alpha}$  to the subalgebra $H_m(0)\otimes H_{n-m}(0)$ admits the following decomposition.
\[\mathcal{V}_{\alpha}\big\downarrow^{\hn}_{H_m(0)\,\otimes\, H_{n-m}(0)} = 
\bigoplus_{\substack{\beta\vDash m  \\ \beta\subseteq\alpha}} \,
\mathcal{V}_\beta \otimes \mathcal{V}_{\alpha/\beta}\] 
In particular, 
\[\mathcal{V}_\alpha\big\downarrow^{\hn}_{H_{n-1}(0)} \cong \bigoplus_{\substack{\beta\vDash n-1  \\ \beta\subset\alpha}} \mathcal{V}_\beta.  \]
That is, the sum on the right runs over all compositions $\beta $ such that $\alpha/\beta$  consists of a single cell.
\end{theorem}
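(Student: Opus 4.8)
The plan is to follow the strategy of Tewari--van Willigenburg \cite[Theorem~9.10]{TvW2015}, exploiting the module structure established in \Cref{the:4skewmodules} together with the combinatorial decomposition in \Cref{thm:two-varS}. The central observation is that the restriction to $H_m(0)\otimes H_{n-m}(0)$ is governed by \emph{where the entries $\{1,2,\ldots,m\}$ sit} within a standard immaculate tableau $T\in\SIT(\alpha)$. Concretely, for each $T$ let $\beta(T)$ denote the shape occupied by the cells containing the entries $1,\ldots,m$. First I would verify the key claim that for any $T\in\SIT(\alpha)$, the subtableau on $\{1,\ldots,m\}$ is itself a standard immaculate tableau (of some composition shape $\beta\subseteq\alpha$), and the complementary subtableau on $\{m+1,\ldots,n\}$, after subtracting $m$ and viewing it in the skew diagram $\alpha/\beta$, is an element of $\SIT(\alpha/\beta)$. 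This gives a bijection
\[
\SIT(\alpha)\;\longleftrightarrow\;\bigsqcup_{\substack{\beta\vDash m\\ \beta\subseteq\alpha}}\SIT(\beta)\times\SIT(\alpha/\beta),
\]
which is exactly the combinatorial shadow of \eqref{eqn:sheila-rdI-two-vars}.

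Next I would upgrade this bijection to a module isomorphism. The subalgebra $H_{m,n-m}(0)$ is generated by $\pi_1,\ldots,\pi_{m-1}$ and $\pi_{m+1},\ldots,\pi_{n-1}$, via the identification \eqref{eqn:submodule-for-induction}. The generators $\pi_1,\ldots,\pi_{m-1}$ only ever move entries among $\{1,\ldots,m\}$, hence act solely on the $\SIT(\beta)$ factor and preserve $\beta(T)$; the generators $\pi_{m+1},\ldots,\pi_{n-1}$ only move entries among $\{m+1,\ldots,n\}$, hence act solely on the $\SIT(\alpha/\beta)$ factor, again preserving $\beta(T)$. I would check that for each of these generators the $\rdI$-descent condition of \Cref{thm:0-Hecke-action} restricts to the corresponding descent condition on the appropriate factor --- this is precisely the content of the identity $i\in\Des(T)\iff i+m\in\Des(\phi(T))$ recorded before \Cref{prop:skew-action}, applied on the skew side, together with the straight-shape descent rule on the $\beta$ side. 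Consequently each subspace $\operatorname{span}\{T:\beta(T)=\beta\}$ is invariant under $H_{m,n-m}(0)$ and is isomorphic, as an $H_m(0)\otimes H_{n-m}(0)$-module, to $\mathcal{V}_\beta\otimes\mathcal{V}_{\alpha/\beta}$. Since these subspaces partition the basis $\SIT(\alpha)$, summing over $\beta\vDash m$ with $\beta\subseteq\alpha$ yields the claimed direct-sum decomposition.

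For the special case $m=n-1$, I would simply specialise: the only compositions $\beta\vDash n-1$ with $\beta\subseteq\alpha$ are those obtained by deleting a single removable cell of $\alpha$, so $\alpha/\beta$ is a single cell. For a one-cell skew shape, $\SIT(\alpha/\beta)$ is a singleton, so $\mathcal{V}_{\alpha/\beta}$ is the trivial one-dimensional $H_1(0)$-module and $\mathcal{V}_\beta\otimes\mathcal{V}_{\alpha/\beta}\cong\mathcal{V}_\beta$. This gives the stated branching to $H_{n-1}(0)$.

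The main obstacle I expect is the verification that the descent behaviour is genuinely \emph{independent} across the two blocks of generators, i.e.\ that no generator in one block interacts with the descent set governing the other block. This requires care at the ``boundary'' index: one must confirm that removing $\pi_m$ from the generating set truly decouples the two halves, so that the $\rdI$-action on $T$ restricted to $H_{m,n-m}(0)$ never mixes the entries $\{1,\ldots,m\}$ with $\{m+1,\ldots,n\}$. Once this decoupling is established, the map $T\mapsto(T|_{\{1,\ldots,m\}},\,T|_{\{m+1,\ldots,n\}})$ is readily seen to intertwine the actions, and the isomorphism follows. I would also need to confirm that $\beta(T)$ is always a valid composition shape contained in $\alpha$ (as opposed to an arbitrary subset of cells), which follows from the immaculate column-strictness forcing the occupied cells of $\{1,\ldots,m\}$ to form a bottom-justified composition diagram.
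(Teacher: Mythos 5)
Your proposal is correct and follows essentially the same route as the paper: the subspaces $\operatorname{span}\{T:\beta(T)=\beta\}$ are exactly the paper's $\mathbf{S}_{X_{\alpha,\beta}}$, your splitting map $T\mapsto\bigl(T|_{\{1,\ldots,m\}},\,T|_{\{m+1,\ldots,n\}}\bigr)$ is the paper's isomorphism $\theta(T)=T_{\le m}\otimes T_{>m}$ from \Cref{Hecke-Subgp-action}, and both arguments adapt Tewari--van Willigenburg via the set-level decomposition coming from \Cref{thm:two-varS} and conclude the $m=n-1$ case from $H_1(0)\cong\mathbb{C}$. The decoupling at the boundary index that you flag as the main obstacle is precisely the (straightforward) verification the paper leaves implicit in asserting that $\mathbf{S}_{X_{\alpha,\beta}}$ is an $H_{m,n-m}(0)$-module and that $\theta$ intertwines the actions.
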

However, before we prove Theorem~\ref{thm:Skew-branch} we require some other results that we now give. From 
 Theorem~\ref{thm:two-varS}, we can conclude  the following proposition, which  already reflects the decomposition claimed above,  at the level of dimensions. 
\begin{proposition}\label{prop:skew-SIT-decomp} For a skew diagram $\alpha/\beta$ and a set $S$ of positive integers of size $|\alpha/\beta|$, let $\SIT_S(\alpha/\beta)$ denote the set of standard immaculate tableaux of shape $\alpha/\beta$ with distinct entries in the set $S$. In particular, for a composition $\gamma \vDash m$ we have that $\SIT(\gamma)=\SIT_{\{1,2,\ldots,m\}}(\gamma)$.
Then we have, for fixed $\alpha\vDash n$, 
\[\SIT(\alpha)=\bigcup_{m\ge 1} \bigsqcup_{\substack {\beta\vDash m\\ \beta\subseteq\alpha}} \SIT(\beta)\times \SIT_{\{|\beta|+1,|\beta|+2,\ldots, |\alpha|\}}(\alpha/\beta).\]
\end{proposition}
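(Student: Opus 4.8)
The plan is to prove the identity by exhibiting, for each fixed $m \ge 1$, an explicit bijection between $\SIT(\alpha)$ and the inner disjoint union $\bigsqcup_{\beta \vDash m,\, \beta \subseteq \alpha} \SIT(\beta) \times \SIT_{\{m+1,\ldots,n\}}(\alpha/\beta)$, realized by cutting a standard immaculate tableau at the value $m$; since this already recovers all of $\SIT(\alpha)$ for every individual $m$, the outer union over $m$ then yields the displayed equality. Concretely, given $T \in \SIT(\alpha)$, I would split $T$ into the sub-filling $T_{\le m}$ occupying the cells whose entries lie in $\{1, \ldots, m\}$ and the complementary sub-filling $T_{> m}$ occupying the cells whose entries lie in $\{m+1, \ldots, n\}$, sending $T$ to the pair $(T_{\le m}, T_{> m})$ with $\beta$ taken to be the shape of $T_{\le m}$. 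The reverse map glues a pair $(U,V)$ with $U \in \SIT(\beta)$ and $V \in \SIT_{\{m+1,\ldots,n\}}(\alpha/\beta)$ into a single filling of $\alpha$ by placing $U$ in the cells of $\beta$ and $V$ in the cells of $\alpha/\beta$.

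The crux, and the step I expect to be the main obstacle, is verifying that the cells of $T$ carrying the entries $\{1, \ldots, m\}$ always form the diagram of a genuine composition $\beta \subseteq \alpha$ sitting in the bottom-left of $\alpha$. I would establish this from the two defining inequalities of a standard immaculate tableau. First, because the entries increase along each row, the cells with entries $\le m$ in any given row form a left-justified prefix: if $(i,j)$ carries an entry $\le m$ then every $(i,j')$ with $j'<j$ carries a strictly smaller entry, hence also $\le m$; so each row of the low-entry region has some length $\beta_i \le \alpha_i$. Second, because the leftmost-column entries increase from bottom to top, if cell $(i+1,1)$ carries an entry $\le m$ then cell $(i,1)$ (which exists since $\alpha$ is a composition, so row $i$ is nonempty) carries the strictly smaller entry, hence also $\le m$; this forbids an empty row beneath a nonempty one, so the positive $\beta_i$ occupy exactly the bottom rows and $\beta = (\beta_1,\ldots,\beta_{\ell(\beta)})$ is a composition with $\beta \subseteq \alpha$.

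Granting the shape claim, the remaining verifications are routine inheritance of inequalities. The restriction $T_{\le m}$ inherits the row- and leftmost-column-increasing conditions, so $T_{\le m} \in \SIT(\beta)$; the restriction $T_{> m}$ inherits the corresponding conditions on the skew diagram $\alpha/\beta$, noting that the leftmost column of $\alpha/\beta$ is the top segment of the leftmost column of $\alpha$, so $T_{> m} \in \SIT_{\{m+1,\ldots,n\}}(\alpha/\beta)$. For the inverse, the only inequalities not already guaranteed by $U$ and $V$ being (skew) standard immaculate tableaux are those across the interface between $\beta$ and $\alpha/\beta$ — at the right end of a row of $\beta$ and at the top of the leftmost column of $\beta$ — and these hold automatically because every entry of $U$ is $\le m$ while every entry of $V$ is $> m$. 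Thus cutting and gluing are mutually inverse, giving the bijection for each fixed $m$. I would close by remarking that this set-level decomposition is the standardization of the two-variable expansion of $\dI_\alpha(X,Y)$ in \Cref{thm:two-varS}, which it refines at the level of dimensions once one observes that order-preserving relabeling gives $|\SIT_{\{m+1,\ldots,n\}}(\alpha/\beta)| = |\SIT(\alpha/\beta)|$.
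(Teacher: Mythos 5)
Your proof is correct, but it takes a genuinely different route from the paper. The paper gives no standalone proof of Proposition~\ref{prop:skew-SIT-decomp}: it simply asserts that the proposition can be concluded from Theorem~\ref{thm:two-varS}, the two-variable expansion $\dI_\alpha(X,Y)=\sum_{\beta\subset\alpha}\dI_\beta(X)\,\dI_{\alpha/\beta}(Y)$ quoted from earlier work, so the decomposition is obtained by extracting coefficients of squarefree monomials from that identity (which is why the paper describes the proposition as reflecting the module decomposition ``at the level of dimensions''). You instead prove the statement directly, by exhibiting the cut-at-$m$ bijection $T\mapsto (T_{\le m},T_{>m})$ and verifying from the two defining inequalities of a standard immaculate tableau that the low-entry region is a composition shape $\beta\subseteq\alpha$: the row-increase condition makes each row of that region a left-justified prefix, and the leftmost-column condition rules out an empty row below a nonempty one, so the nonempty rows form a bottom segment. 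This shape verification is exactly the point the paper's citation glosses over, and your gluing argument for the inverse (all interface inequalities hold automatically because entries of $U$ are $\le m$ and entries of $V$ are $>m$) is clean and complete. What the paper's route buys is brevity and consistency, since the combinatorial content is already embedded in the proof of the cited two-variable theorem; what your route buys is a self-contained argument that establishes the finer set-level statement without passing through generating functions, and moreover your bijection is precisely the map $\theta: T\mapsto T_{\le m}\otimes T_{>m}$ of \eqref{eqn:Straight-to-skew-iso} that the paper needs anyway for the module isomorphism in Proposition~\ref{Hecke-Subgp-action}, so nothing in your construction is wasted.
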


 For $T\in \SIT(\alpha)$ let $T_{\le m}$ be the subtableau of $T$ formed by the entries that are at most $m$, and let $T_{> m}$ be the subtableau of $T$ formed by the entries  greater than $m$. Similar to \cite{TvW2015}, for $\alpha \vDash n$ and $\beta\vDash m $ such that $\beta\subseteq \alpha$, define
\[X_{\alpha,\beta}=\{T\in \SIT(\alpha):  \text{ shape of } T_{>m} \text{ is } \alpha/\beta\}.\]

Then Proposition~\ref{prop:skew-SIT-decomp} may be rephrased as 
\begin{equation}\label{eqn:skew-SIT-decomp} \SIT(\alpha)=\bigcup_{m\ge 1}\bigsqcup_{\substack {\beta\vDash m\\ \beta\subseteq\alpha} }X_{\alpha,\beta} .
    \end{equation}

The overarching goal of the arguments in \cite[Section 9, pages 1061-1063]{TvW2015} is to make the precise formal identifications that are \svw{required}  to lift \eqref{eqn:skew-SIT-decomp} to the module level, which we now follow.

 Define the following vector spaces:
 \begin{itemize}
 \item  $\mathbf{S}_{\beta}$ is the $\mathbb{C}$-linear span of all tableaux in $\SIT(\beta)$;
 \item $\mathbf{S}_{\alpha/\beta}$ is the $\mathbb{C}$-linear span of all tableaux in $\SIT(\alpha/\beta)$;
     \item $\mathbf{S}_{X_{\alpha,\beta}}$ is the $\mathbb{C}$-linear span of all tableaux in $X_{\alpha,\beta}$.
 \end{itemize}
 There is a natural vector space isomorphism $\theta: \mathbf{S}_{X_{\alpha,\beta}}\rightarrow  
 \mathbf{S}_{\beta}\otimes \mathbf{S}_{\alpha/\beta}$
 given by 
 \begin{equation}\label{eqn:Straight-to-skew-iso}
 \theta(T)= T_{\le m}\otimes T_{>m}.
 \end{equation}

   Recalling that $H_m(0)\otimes H_{n-m}(0)$ is generated by 
the set $\{\pi_i\otimes 1, 1\otimes \pi_{j}: 1\le i\le m-1, 1\le j\le n-m-1\}$, we see that $\mathbf{S}_{X_{\alpha,\beta}}$ becomes an $H_m(0)\otimes H_{n-m}(0)$-module by setting 
\begin{alignat*}{3} (\pi_i\otimes 1)\cdot T&= \pi_i(T), &&\quad\text{\ if $1\le i\le m-1$,}\\
                (1\otimes \pi_{j}) \cdot T&= \pi_{m+j}(T), &&\quad\text{\ if $1\le j\le n-m-1$}. 
\end{alignat*}

Equivalently,  this definition equips $\mathbf{S}_{X_{\alpha,\beta}}$ with the structure of an $H_{m, n-m}(0)$-module, where $\pi_i \cdot T= \pi_i(T)$ for $1\le i\le m-1$, $m+1\le i\le n-1$.  Moreover it is straightforward to verify that the $H_{m, n-m}(0)$-action commutes with the isomorphism $\theta$ described in \eqref{eqn:Straight-to-skew-iso}.
We now immediately have the following analogue of \cite[Proposition 9.9]{TvW2015}.
\begin{proposition}\label{Hecke-Subgp-action} For $\alpha \vDash n, \beta\vDash m$ and $\beta \subseteq \alpha$ the map $\theta: T\mapsto  T_{\le m}\otimes T_{>m}$ defines an isomorphism of $H_m(0)\otimes H_{n-m}(0)$-modules for the $\rdI$-action 
\[\mathbf{S}_{X_{\alpha,\beta}}\cong 
 \mathbf{S}_{\beta}\otimes \mathbf{S}_{\alpha/\beta}.\]
    \end{proposition}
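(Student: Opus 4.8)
The plan is to verify that the explicitly given linear map $\theta$ is simultaneously a vector space isomorphism and a module homomorphism for the $H_m(0)\otimes H_{n-m}(0)$-action. The bijectivity is essentially immediate: by the disjoint union decomposition in \eqref{eqn:skew-SIT-decomp}, a tableau $T\in X_{\alpha,\beta}$ is precisely one whose entries $>m$ occupy the skew shape $\alpha/\beta$, so the entries $\le m$ occupy the subdiagram $\beta$. Splitting $T$ into $T_{\le m}$ and $T_{>m}$ gives a well-defined element of $\SIT(\beta)\times \SIT_{\{m+1,\ldots,n\}}(\alpha/\beta)$, and after the order-preserving relabeling $\{m+1,\ldots,n\}\to\{1,\ldots,n-m\}$ this lands in $\SIT(\beta)\times\SIT(\alpha/\beta)$. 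First I would check that $T_{\le m}$ really is a legal standard immaculate tableau of shape $\beta$ (its leftmost-column and row conditions are inherited from those of $T$), and likewise for $T_{>m}$; then since $\theta$ sends the distinguished basis of $\mathbf{S}_{X_{\alpha,\beta}}$ bijectively to the distinguished basis of $\mathbf{S}_\beta\otimes\mathbf{S}_{\alpha/\beta}$, it is a linear isomorphism.

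The substantive part is checking that $\theta$ intertwines the two module structures, i.e.\ that $\theta(\pi_i\cdot T)=(\pi_i\cdot)\theta(T)$ for each generator of $H_{m,n-m}(0)$, where on the left $\pi_i\cdot T=\pi_i^{\rdI}(T)$ and on the right the action is through the tensor factors as in \eqref{eqn:submodule-for-induction}. The key observation is locality: the generator $\pi_i^{\rdI}$ only ever inspects and possibly swaps the entries $i$ and $i+1$, and its effect depends only on the relative vertical position of these two cells. For a generator with $1\le i\le m-1$ both $i,i+1$ lie in the $\beta$-part $T_{\le m}$, so the swap (or fixing, or annihilation) happens entirely within $T_{\le m}$ and leaves $T_{>m}$ untouched; for $m+1\le i\le n-1$ the reverse holds. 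Thus I would split the verification into these two symmetric cases and, within each, run through the three branches of \eqref{eqn:defn-pi(T)} (identity, swap, zero), confirming that the descent condition $i\in\Des_{\rdI}(T)$ and the standardness of $s_i(T)$ are detected identically whether one works in $T$ or in the relevant tensor factor. The only point needing care is that swapping two consecutive entries inside one factor cannot create or destroy a descent straddling the two factors; but since a generator with index in $\{1,\ldots,m-1\}$ never touches any entry $\ge m+1$, no cross-factor descent can be affected, and symmetrically for the other range.

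The main obstacle I anticipate is purely bookkeeping rather than conceptual: one must be careful that the $\rdI$-descent set and the standardness-preservation condition of Proposition~\ref{prop:skew-action} are computed with respect to the correct shape. Concretely, whether $s_i(T_{\le m})$ lies in $\SIT(\beta)$ must agree with whether $s_i(T)$ lies in $\SIT(\alpha)$, which holds because the rows and leftmost column of $\beta$ sit inside those of $\alpha$ and the entries $>m$ play no role in the $\beta$-part of the swap. Once this alignment is established in both index ranges, the intertwining identity follows on basis elements and hence by linearity, completing the proof that $\theta$ is an $H_m(0)\otimes H_{n-m}(0)$-module isomorphism. I would invoke Proposition~\ref{prop:skew-action} to guarantee that the transferred actions are well-defined $0$-Hecke actions, so that the verification reduces exactly to the local case analysis just described, in direct parallel with \cite[Proposition~9.9]{TvW2015}.
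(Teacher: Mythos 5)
Your proposal is correct and follows essentially the same route as the paper, which treats this proposition as immediate from the preceding setup: the basis bijection $\theta$ coming from the decomposition \eqref{eqn:skew-SIT-decomp}, together with the (asserted as ``straightforward to verify'') fact that the $H_{m,n-m}(0)$-action commutes with $\theta$ because each generator $\pi_i$ only inspects and swaps the entries $i,i+1$, which lie entirely in one tensor factor. Your added care about the boundary between the $\beta$-cells and the $\alpha/\beta$-cells (entries $\le m$ versus $>m$ can never violate a row or column condition across the boundary) is exactly the detail the paper leaves implicit.
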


\begin{proof}[Proof of Theorem~\ref{thm:Skew-branch}]  Considering the above discussions, the proof is now  immediate upon observing that 
\begin{itemize}
\item
$\mathbf{S}_\alpha= \mathcal{V}_\alpha$ as $H_n(0)$-modules, and 
\item there is an $H_m(0)\otimes H_{n-m}(0)$-module isomorphism 
\[\mathbf{S}_\alpha\cong \bigoplus_{\substack{\beta\vDash m  \\ \beta\subseteq\alpha}} \,
\mathbf{S}_{X_{\alpha,\beta}}. \]
\end{itemize}
The special case $m=n-1$ is a consequence of the fact that $H_1(0)\cong \mathbb{C}$.
\end{proof}

 We conclude this section with the observation that Theorem~\ref{thm:Skew-branch} 
 holds if the module $\mathcal{V}_\alpha$ carrying the $\rdI$-action is replaced with the modules for the $\dI$-action, or the $\mathcal{A}^*$- and $\bA^*$-actions. 
  The precise statements appear below.  We omit the proofs, since they follow almost identically from the above arguments for Theorem~\ref{thm:Skew-branch}.

 \begin{theorem}\label{thm:Skew-branch-all4flavours} Let $\alpha\vDash n$, and  let $\mathcal{U}_\alpha$ denote the $\hn$-module defined by any one of the four actions  on the  set $\SIT(\alpha)$ arising from  the four descent sets $\Des_{\dI}$, $\Des_{\rdI}$, $\Des_{\mathcal{A}^*}$, $\Des_{\bA^*}$.  
 Then for fixed $m\le n$ we have 
\[\mathcal{U}_{\alpha}\big\downarrow^{\hn}_{H_m(0)\,\otimes\, H_{n-m}(0)} = 
\bigoplus_{\substack{\beta\vDash m  \\ \beta\subseteq\alpha}} \,
\mathcal{U}_\beta \otimes \mathcal{U}_{\alpha/\beta}.
\quad\text{ In particular, }\quad
\mathcal{U}_\alpha\big\downarrow^{\hn}_{H_{n-1}(0)} \cong \bigoplus_{\substack{\beta\vDash n-1  \\ \beta\subset\alpha}} \mathcal{U}_\beta . \]
\end{theorem}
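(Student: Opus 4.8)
The plan is to reuse the argument for Theorem~\ref{thm:Skew-branch} almost verbatim, since the only places where the $\rdI$-action actually entered were Proposition~\ref{Hecke-Subgp-action} and the identification $\mathbf{S}_\alpha=\mathcal{V}_\alpha$. The combinatorial backbone of that proof---the disjoint decomposition \eqref{eqn:skew-SIT-decomp} of $\SIT(\alpha)$ into the pieces $X_{\alpha,\beta}$, together with the vector space isomorphism $\theta\colon T\mapsto T_{\le m}\otimes T_{>m}$ of \eqref{eqn:Straight-to-skew-iso}---makes no reference to any action and so is available unchanged. I would therefore fix one of the four descent types $a\in\{\dI,\rdI,\mathcal{A}^*,\bA^*\}$ and let $\mathcal{U}_\alpha$, $\mathcal{U}_\beta$, $\mathcal{U}_{\alpha/\beta}$ denote the corresponding modules on $\SIT(\alpha)$, $\SIT(\beta)$, and $\SIT(\alpha/\beta)$ respectively, as furnished by Theorem~\ref{thm:0-Hecke-action} and Theorem~\ref{the:4skewmodules}.

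The crux is to re-prove Proposition~\ref{Hecke-Subgp-action} for this action, i.e.\ that $\theta$ intertwines the $H_m(0)\otimes H_{n-m}(0)$-action. The key observation is that, for every one of the four descent sets, membership $i\in\Des_a(T)$ is determined solely by the relative row positions of the entries $i$ and $i+1$ (strictly or weakly above or below), exactly as recorded in Table~\ref{table:All4Imm}. Consequently, for a generator $\pi_i^a$ with $1\le i\le m-1$ both $i$ and $i+1$ lie in $T_{\le m}$, so the descent test and the swap $s_i$ see only $T_{\le m}\in\SIT(\beta)$ and replicate the action of $\pi_i^a$ on $\SIT(\beta)$; symmetrically, for $m+1\le i\le n-1$ both entries lie in $T_{>m}$ and the generator replicates the shifted action on $T_{>m}$. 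I would also check that a small generator ($i\le m-1$) permutes only entries $\le m$ and hence leaves the set of cells carrying entries $>m$ fixed, so it preserves the shape $\alpha/\beta$ of $T_{>m}$ and keeps $T$ inside $X_{\alpha,\beta}$; the symmetric statement holds for the large generators. This shows each $\mathbf{S}_{X_{\alpha,\beta}}$ is an $H_{m,n-m}(0)$-submodule and that $\theta$ gives an isomorphism $\mathbf{S}_{X_{\alpha,\beta}}\cong\mathcal{U}_\beta\otimes\mathcal{U}_{\alpha/\beta}$.

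With this in hand the conclusion is immediate, exactly as in the proof of Theorem~\ref{thm:Skew-branch}: one has $\mathbf{S}_\alpha=\mathcal{U}_\alpha$ as $\hn$-modules, and since the small and large generators each preserve every $X_{\alpha,\beta}$, the decomposition \eqref{eqn:skew-SIT-decomp} lifts to a direct sum of $H_m(0)\otimes H_{n-m}(0)$-submodules $\mathbf{S}_\alpha\cong\bigoplus_{\beta}\mathbf{S}_{X_{\alpha,\beta}}$. Summing the isomorphisms from the previous step over all $\beta\vDash m$ with $\beta\subseteq\alpha$ yields the branching formula, and the special case $m=n-1$ follows from $H_1(0)\cong\mathbb{C}$.

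The main obstacle I anticipate is bookkeeping rather than conceptual: verifying that each $X_{\alpha,\beta}$ is genuinely closed under the generators requires care with the ``$\pi_i^a(T)=0$'' clause and with confirming that a swap $s_i$ never transports an entry across the threshold $m$. For the $\dI$- and $\bA^*$-actions the associated poset is the reversed one $P\dI_{\alpha/\beta}$, so I would double-check the orientation of the cover relations; but since the branching statement is at the level of module decompositions, this reversal does not affect the result.
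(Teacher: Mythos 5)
Your proposal is correct and follows exactly the route the paper intends: the paper omits this proof, stating only that it ``follows almost identically from the above arguments for Theorem~\ref{thm:Skew-branch}'', and your write-up supplies precisely those details---the action-independent decomposition into the $X_{\alpha,\beta}$ with the isomorphism $\theta$, plus the observation that all four descent sets depend only on the relative row positions of $i$ and $i+1$, so the intertwining of Proposition~\ref{Hecke-Subgp-action} and the closure of each $\mathbf{S}_{X_{\alpha,\beta}}$ carry over verbatim to each action. Your care with the $\pi_i^a(T)=0$ clause and with generators not moving entries across the threshold $m$ is exactly the bookkeeping the paper's ``almost identically'' glosses over.
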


The analogous result holds for each of these four actions on $\SET(\alpha),$ the subset of $\SIT(\alpha)$ in which all columns increase. We defer this discussion to Section~\ref{sec:Skew-Hecke-poset}.

\section{The skew immaculate Hecke poset}\label{sec:Skew-Hecke-poset}
Let $\alpha\vDash n$, $\beta\vDash m$ with $\beta\subseteq \alpha$.  In this section we show how the results of \cite{NSvWVW2024} on the immaculate Hecke poset $P\rdI_\alpha$ generalise to the skew immaculate poset $P\rdI_{\alpha/\beta}$.

We begin with an enumerative formula extending  a result of \cite{BBSSZ2014}.

\begin{proposition} Let $\alpha, \beta$ be compositions with $\beta \subseteq \alpha$, and $\ell=\ell(\alpha)$, $k = \ell(\beta)$. Let 
    $\gamma = (\alpha_{k+1},\ldots,\alpha_{\ell})\vDash m$.  Then 
    \begin{enumerate}
        \item 
        $|\SIT(\gamma)|=\frac{m!}
        {m(m-\gamma_1) (m-(\gamma_1+\gamma_2))\cdots 
        (m-\sum_{j=1}^{\ell-k} \gamma_j) \prod_{i=1}^{\ell(\gamma)}(\gamma_i-1)!}$
        \item $|\SIT(\alpha/\beta)|=|\SIT(\gamma)|\cdot \binom{n}{|\gamma|,\, \alpha_k-\beta_k,\, \ldots,\, \alpha_1-\beta_1}.$
    \end{enumerate}
\end{proposition}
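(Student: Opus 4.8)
The plan is to treat the two parts separately. Part (1) is the enumeration of standard immaculate tableaux of the \emph{straight} composition shape $\gamma=(\alpha_{k+1},\dots,\alpha_\ell)$, which is essentially the result of \cite{BBSSZ2014}; I would prove it by recognising $\SIT(\gamma)$ as the set of linear extensions of a poset $P_\gamma$ on the cells of $\gamma$, where $(i,j)\prec(i,j+1)$ records the row condition and $(i,1)\prec(i+1,1)$ records the first-column condition. The key observation is that every cell has a unique lower cover (namely $(i,j)$ covers $(i,j-1)$ when $j\ge 2$, and $(i,1)$ covers $(i-1,1)$ when $i\ge 2$), so $P_\gamma$ is a rooted tree with minimum $(1,1)$, that is, a forest poset. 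The number of linear extensions of a forest is $m!/\prod_v h_v$, where $h_v$ is the size of the principal filter $\{w:w\succeq v\}$.

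I would then compute these hook lengths directly. For $j\ge 2$ the filter of $(i,j)$ is the tail of row $i$, so $h_{(i,j)}=\gamma_i-j+1$, contributing $(\gamma_i-1)!$ across each row; while the filter of $(i,1)$ is the union of rows $i,i+1,\dots,\ell(\gamma)$, giving $h_{(i,1)}=\sum_{t\ge i}\gamma_t$. Multiplying, $\prod_i\bigl(\sum_{t\ge i}\gamma_t\bigr)=m(m-\gamma_1)\cdots\gamma_{\ell(\gamma)}$ and $\prod_i(\gamma_i-1)!$ together give exactly the denominator in the statement. Here the displayed product is read as the $\ell(\gamma)=\ell-k$ factors $m-\sum_{j=1}^{t}\gamma_j$ for $t=0,1,\dots,\ell-k-1$, whose last factor is $\gamma_{\ell(\gamma)}$; this proves (1).

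For part (2), the strategy is to show that a tableau of $\SIT(\alpha/\beta)$ splits into independent pieces. Since $\beta_i\ge 1$ for all $i\le k=\ell(\beta)$, no cell of rows $1,\dots,k$ lies in the leftmost column, so by Definition~\ref{def:skew-SIT} the cells of $\alpha\setminus\beta$ in the leftmost column are exactly the first cells of rows $k+1,\dots,\ell$. Consequently the immaculate first-column condition constrains only the straight block $\gamma$ occupying rows $k+1,\dots,\ell$, while each of the rows $1,\dots,k$ is subject only to increasing left to right. I would make this precise as a bijection
\[\SIT(\alpha/\beta)\ \longleftrightarrow\ \Bigl\{\text{ordered set partitions of } [\,|\alpha/\beta|\,] \text{ with block sizes } (|\gamma|,\alpha_k-\beta_k,\dots,\alpha_1-\beta_1)\Bigr\}\times\SIT(\gamma),\]
sending $T$ to the list of entry-sets used by the $\gamma$-block and by each row $1,\dots,k$, together with the standardisation of the $\gamma$-block. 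The inverse fills the $\gamma$-block by transporting a given $\SIT(\gamma)$ pattern through the unique order-isomorphism onto its chosen content, and fills each short row by arranging its chosen content in increasing order (the unique valid arrangement). Counting the two factors gives $\binom{|\alpha/\beta|}{|\gamma|,\alpha_k-\beta_k,\dots,\alpha_1-\beta_1}\cdot|\SIT(\gamma)|$, which is (2).

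The main obstacle is the structural claim underlying part (2): one must verify carefully that the only coupling among the rows is through the leftmost column, so that rows $1,\dots,k$ are genuinely free once their contents are fixed; this rests on $\beta_i\ge 1$ for $i\le k$ and on the precise reading of the leftmost-column condition. A secondary point, needed to make $|\SIT(\gamma)|$ the correct per-block factor, is the content-independence of the straight count, i.e.\ that standardisation is a bijection between immaculate fillings of $\gamma$ with any fixed $m$-element content and $\SIT(\gamma)$. I would also flag the harmless notational convention that the upper index of the multinomial coefficient in (2) is $|\alpha/\beta|$, the number of cells of the skew shape, which equals the sum of the listed block sizes.
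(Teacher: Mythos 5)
Your proposal is correct, and it splits naturally into two comparisons. For part (2) you take essentially the same route as the paper, which disposes of the claim in a single sentence (``the diagram of $\gamma$ can be filled independently of the skew diagram consisting of the rows below it''): your ordered-set-partition bijection, together with the observation that $\beta_i\ge 1$ for $i\le k$ forces the leftmost-column condition to involve only the $\gamma$ block, is precisely the fully detailed version of that remark, including the standardisation step that makes $|\SIT(\gamma)|$ the per-block factor. For part (1) you genuinely diverge: the paper simply cites \cite[Proposition~3.13]{BBSSZ2014}, whereas you re-derive the count by realising $\SIT(\gamma)$ as the set of linear extensions of a poset in which every cell has a unique lower cover, i.e.\ a tree rooted at the cell $(1,1)$, and then applying the hook-length formula for forests, $m!/\prod_v h_v$ with $h_v$ the size of the principal filter. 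This buys a self-contained proof (the hook computation $h_{(i,j)}=\gamma_i-j+1$ for $j\ge 2$ and $h_{(i,1)}=m-\sum_{t<i}\gamma_t$ checks out and reproduces the stated denominator) at the cost of invoking, or proving, the forest formula itself. Finally, your two notational repairs are the right readings of the statement: as literally printed the denominator's last factor $m-\sum_{j=1}^{\ell-k}\gamma_j$ would be zero, so the product must run over $t=0,\ldots,\ell-k-1$ with last factor $\gamma_{\ell(\gamma)}$; and the multinomial's top index must be $|\alpha/\beta|$ (the sum of the listed block sizes), not $|\alpha|$.
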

\begin{proof} Item (1) is the result in \cite[Proposition~3.13]{BBSSZ2014}.

Item (2) follows because the diagram of $\gamma$ can be filled independently of the skew diagram  consisting of the rows below it. 
\end{proof}

In particular,  when $\ell(\beta)=\ell(\alpha)=k$, the composition $\gamma$ is empty and the rows of the skew diagram can be filled independently of one  other.  Because of the row increase condition, in this case the size of $\SIT(\alpha/\beta)$ is simply the multinomial coefficient $\binom{n}{\alpha_k-\beta_k, \ldots, \alpha_1-\beta_1}.$

We now define some tableaux that will play a key role in our analysis of $P\rdI_{\alpha/\beta}$. When $\beta = \emptyset$ these definitions coincide with their counterparts in Definition~\ref{def:maxminels}.

\begin{definition}\label{def:skewels} For $\alpha \vDash n, \beta\vDash m$ with $\beta\subseteq\alpha$, we define the following special standard tableaux in $\SIT(\alpha/\beta)$. 
\begin{itemize}
\item $\svw{S}^0_{\alpha/\beta}$ is the  tableau in which the cells of the first column of $\alpha$, if any remain in $\alpha/\beta$ (i.e., if $\ell(\alpha)-\ell(\beta)\ne 0$), are filled  with 
$1,\ldots, \ell(\alpha)-\ell(\beta)$, increasing bottom to top;  then fill the remaining cells by rows, \emph{top to bottom}, left to right with consecutive integers starting at $\ell(\alpha)-\ell(\beta)+1$ and ending at $n-m.$  
\item $S^{row}_{\alpha/\beta}$ is the \emph{row superstandard}  tableau  whose entries left to right, \emph{bottom to top},  beginning with the bottom row and moving up, using the numbers $1,2, \ldots, n-m$ taken in consecutive order.  
\item $S^{col}_{\alpha/\beta}$ is the \emph{column superstandard}   tableau  whose columns are  filled   bottom to top,  left to right, beginning with the leftmost column and moving   right, using the numbers $1,2,\ldots, n-m$  taken in consecutive order.  
 \end{itemize}
\end{definition}

\begin{example}\label{ex:special-skew-SIT-1}  Let $\alpha=(2,2,3,2,4)  \vDash 13$, $\beta=(2,1,2)\vDash 5$.  
\[\svw{S}^0_{\alpha/\beta}\!=\tableau{2& 3&4 &5\\ 1 & 6\\ \bullet &\bullet &7\\ \bullet  &8\\\bullet &\bullet}\!
S^{row}_{\alpha/\beta}\!= \tableau{5& 6&7 &8\\ 3 & 4\\\bullet &\bullet&2\\ \bullet &1\\\bullet &\bullet}\!
S^{col}_{\alpha/\beta}\!=\tableau{2& 5&7 &8\\ 1 & 4\\ \bullet &\bullet&6\\ \bullet  &3\\\bullet &\bullet}
\]

\end{example}

\begin{example}\label{ex:special-skew-SIT-2}  Let $\alpha=(5,4,6)\vDash 15$, $\beta=(2,1,2)\vDash 5$. Note that here $\ell(\beta)=\ell(\alpha)=3$.  
\[\svw{S}^0_{\alpha/\beta} =
\tableau{\bullet & \bullet &1 &2 &3 &4 \\ \bullet   &5 & 6 &7\\ \bullet &\bullet &8 &9 &10}\quad 
S^{row}_{\alpha/\beta}= \tableau{\bullet &\bullet &7 &8 &9 &10 \\ \bullet   & 4&5 & 6 \\ \bullet &\bullet &1 &2 &3}\quad 
S^{col}_{\alpha/\beta}=\tableau{\bullet & \bullet &4 &7 &9 &10 \\ \bullet  &1 & 3 &6\\ \bullet &\bullet &2 &5 &8}
\]

\end{example}

Next we examine the module structures afforded by the skew Hecke poset $P\rdI_{\alpha/\beta}$ more carefully.
We establish the analogues of \cite[Proposition~6.6, Proposition 6.11]{NSvWVW2024}. We begin by  focusing on the row-strict immaculate action $\pi^{\rdI}$. Recall that $s_i$ is the operator \svw{that} switches the entries $i$, $i+1$ in a tableau $T$. The $\pi^{\rdI}$-action of the  Hecke generator $\pi_i ^{\rdI}$  on  $T\in \SIT(\alpha/\beta)$ is given precisely as follows by \Cref{prop:skew-action} and \Cref{table:All4Imm}. As in the previous section we will denote $\pi_i ^{\rdI}$ by $\pi_i$ for ease of comprehension.
\[\pi _i(T)= \pi^{\rdI}_i(T)=\begin{cases} T, &  \text{if $i+1$ is strictly above $i$ in $T$},\\
                        s_i(T), & \text{if $i+1$ is strictly below $i$ in $T$},\\
                        0, & \text{otherwise}.
 \end{cases}\]

The straightening algorithms below  will allow us to conclude, using \Cref{prop:poset-to-composition-serieS}, that the row-strict skew immaculate module $\mathcal{V}_{\alpha/\beta}$ is cyclically generated by $S^0_{\alpha/\beta}$, and the skew immaculate module $\mathcal{W}_{\alpha/\beta}$ is cyclically generated by $S^{row}_{\alpha/\beta}$.

We begin with a lemma. See Example~\ref{ex:skew-bot-to-T} for an illustration.
\begin{lemma}\label{lem:consecutive-gens}  Let $1\le b\le a\le n-1$. Let $\alpha\vDash n$ and $\beta\subseteq \alpha$. Suppose $S, T\in \SIT(\alpha/\beta)$, $S\ne T$,  and suppose 
$\pi_b, \pi_{b+1},\ldots,\pi_a$ is a sequence of generators such that 
\begin{itemize} 
\item $T=\pi _a \pi _{a-1}\cdots \pi _{b+1}\pi _b(S)$. 
\item For each $i=b, b+1, \ldots, a$, $T_i=\pi_i \pi_{i-1}\cdots \pi_{b+1}\pi_b(S) \in \SIT(\alpha/\beta)$ and $T_i\ne T_{i-1}$, where we set $T_{b-1}=S$.
\end{itemize}
\svw{Then $T$ is obtained from $S$ by replacing the entries   $b, b+1, b+2, \ldots , a, a+1$ by the entries $a+1, b, b+1, \ldots , a-1, a$, respectively.}
\end{lemma}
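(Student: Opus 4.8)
The plan is to prove the statement by induction on the index $i$ running from $b$ to $a$, keeping track of the cell occupied by each of the entries $b, b+1, \ldots, a+1$ along the chain $S = T_{b-1}, T_b, \ldots, T_a = T$. For a tableau $R \in \SIT(\alpha/\beta)$ and an entry $v$, write $\mathrm{pos}_R(v)$ for the cell of $\alpha/\beta$ containing $v$. Since $S$ is a bijective filling, the cells $c_v := \mathrm{pos}_S(v)$ for $v = b, \ldots, a+1$ are pairwise distinct; these are the cells whose contents I will follow.

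First I would record what the two hypotheses force at each step. Because $T_i = \pi_i(T_{i-1})$ lies in $\SIT(\alpha/\beta)$ (hence is nonzero) and $T_i \ne T_{i-1}$, the definition of the $\rdI$-action recalled just above the lemma forces $\pi_i(T_{i-1}) = s_i(T_{i-1})$, and this in turn requires that $i+1$ be strictly below $i$ in $T_{i-1}$. Thus at every step the generator genuinely interchanges the entries $i$ and $i+1$; no step is an identity and no step is zero.

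The inductive claim I would carry is: for each $i$ with $b-1 \le i \le a$, the tableau $T_i$ agrees with $S$ except that cell $c_b$ contains $i+1$, and for each $v$ with $b < v \le i+1$ the cell $c_v$ contains $v-1$; all cells outside $\{c_b, \ldots, c_{i+1}\}$ retain their $S$-entries. The base case $i = b-1$ is the tautology $T_{b-1} = S$. For the inductive step, suppose the claim holds for $T_{i-1}$ with $b \le i \le a$. Then in $T_{i-1}$ the entry $i$ sits in $c_b$ (since $c_b$ holds $(i-1)+1 = i$), while the entry $i+1$ still sits in its original cell $c_{i+1}$, because the only cells altered so far are $c_b, \ldots, c_i$ and $c_{i+1}$ is none of these. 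Applying $s_i$ interchanges these two entries, so in $T_i$ the cell $c_b$ holds $i+1$ and $c_{i+1}$ holds $i$, while the entries in $c_{b+1}, \ldots, c_i$ (namely $b, \ldots, i-1$, none equal to $i$ or $i+1$) are untouched. This is exactly the claim for $T_i$.

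Evaluating the claim at $i = a$ yields that $T = T_a$ is obtained from $S$ by placing $a+1$ in $c_b$ and $v-1$ in $c_v$ for $b < v \le a+1$, which is precisely the asserted relabeling of $b, b+1, \ldots, a, a+1$ by $a+1, b, b+1, \ldots, a-1, a$. The only delicate point, and hence the main obstacle, is the positional bookkeeping: one must verify at each step that the entry $i+1$ has not yet been disturbed (so it still occupies $c_{i+1}$) and that the already-shifted entries $b, \ldots, i-1$ are inert under $\pi_i$. Both facts follow cleanly from the inductive hypothesis once one observes that $\pi_i$ only ever moves the two entries $i$ and $i+1$.
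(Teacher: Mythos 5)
Your proof is correct and takes essentially the same approach as the paper: both arguments rest on the observation that the hypotheses ($T_i \in \SIT(\alpha/\beta)$ and $T_i \neq T_{i-1}$) rule out the cases $\pi_i(T_{i-1}) = T_{i-1}$ and $\pi_i(T_{i-1}) = 0$, so every step is a genuine swap $s_i$. The paper then simply declares the conclusion immediate, whereas you carry out the positional bookkeeping with an explicit induction --- a fuller writing-up of the same argument, not a different one.
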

\begin{proof}  The hypotheses guarantee that at each step of applying the sequence of operators $\pi_a \pi _{a-1}\cdots \pi _{b+1}\pi _{b}$,  the resulting tableau $T_i$ is in $\SIT(\alpha/\beta)$, and is the result of swapping $i$ and $i+1$ in $T_{i-1}$; in particular the result  is always nonzero.%

The statement now follows immediately.
    \end{proof}
\begin{proposition}\label{prop:bot-elt-skew} Let $\alpha\vDash n$, $\beta\vDash m$ with $\beta\subseteq \alpha$.
Consider the skew standard immaculate tableau $S^0_{\alpha/\beta}$.  Then for any $T\in \SIT({\alpha/\beta})$ where $T\ne S^0_{\alpha/\beta},$ there is a sequence of generators $\pi_{j_i}, $ and distinct tableaux $T_i\in \SIT({\alpha/\beta}),$ $i=1,\ldots, r,$  such that $\pi_{j_i}(T_{i})=T_{i-1}, i=1,2,\ldots ,r,$ where we set $T_{0}=T$ and $T_r=S^0_{\alpha/\beta}.$ Hence we conclude 
\[ S^0_{\alpha/\beta} \poRI T \text{ and } T=\pi_{j_1}\pi_{j_{2}}\cdots\pi_{j_r}(S^0_{\alpha/\beta}).\]
In particular, \svw{$S^0_{\alpha/\beta}$} is the unique minimal element of the poset $P\rdI_{\alpha/\beta}.$
\end{proposition}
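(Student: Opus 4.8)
The plan is to reduce the whole statement to the single assertion that $S^0_{\alpha/\beta}\poRI T$ for every $T\in\SIT(\alpha/\beta)$. Once this is known, $S^0_{\alpha/\beta}$ is a least element of $P\rdI_{\alpha/\beta}$, and a least element is automatically the unique minimal element, so the final sentence comes for free. Since the cover relation satisfies $S\poRIcover T\iff T=\pi^{\rdI}_i(S)$, proving $S^0_{\alpha/\beta}\poRI T$ amounts to building a descending chain $T=T_0\succ T_1\succ\cdots\succ T_r=S^0_{\alpha/\beta}$ in which each step undoes a single generator, i.e.\ $T_{k-1}=\pi^{\rdI}_{j_k}(T_k)$ is a genuine swap; reading the chain from the bottom up then yields $T=\pi^{\rdI}_{j_1}\cdots\pi^{\rdI}_{j_r}(S^0_{\alpha/\beta})$ with the $T_i$ distinct, exactly as required.

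First I would determine when a downward step is available. Writing $\operatorname{row}_T(v)$ for the row index of the entry $v$, suppose $v+1$ lies strictly above $v$ in $T$. Swapping them preserves every row, since two consecutive integers in distinct rows cannot break a left-to-right increase; and as we are in $\SIT(\alpha/\beta)$ the only column constraint is the strict increase in the leftmost column, which a short case check shows is violated precisely when $v$ and $v+1$ both lie in the leftmost column. Hence the step $T\succ s_v(T)$ with $s_v(T)\in\SIT(\alpha/\beta)$ is available exactly when $v+1$ is strictly above $v$ and the two entries are not both in the leftmost column; call $T$ \emph{reduced} if no such $v$ exists. To guarantee termination I would use the statistic $\operatorname{stat}(T)=\sum_{v} v\,\operatorname{row}_T(v)$, which at each such step drops by exactly $\operatorname{row}_T(v+1)-\operatorname{row}_T(v)>0$; being a nonnegative integer, it forces the descending process to halt, necessarily at a reduced tableau.

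The heart of the argument, and the step I expect to be the main obstacle, is to show that the \emph{only} reduced tableau is $S^0_{\alpha/\beta}$. Set $c=\ell(\alpha)-\ell(\beta)$ and $N=|\alpha/\beta|$, and observe that the leftmost-column cells of $\alpha/\beta$ are exactly rows $\ell(\beta)+1,\dots,\ell(\alpha)$, carrying strictly increasing entries $f_1<\cdots<f_c$ with $f_i$ in row $\ell(\beta)+i$. Reducedness says every ascent (a $v$ with $v+1$ strictly above $v$) occurs inside this column, so along any run of consecutive values avoiding the column the row index is weakly decreasing. I would first prove $\{f_1,\dots,f_c\}=\{1,\dots,c\}$ by a telescoping contradiction: taking the least value $k$ outside the column, the fact that an off-column entry meeting the leftmost column would have a smaller entry to its left seeds the bound $\operatorname{row}_T(k)\le\ell(\beta)+(k-1)$, and the non-ascent (weakly decreasing row) conditions propagate this bound up through $k,k+1,\dots$ until the next column entry is reached, contradicting that this entry sits in row $\ell(\beta)+k$. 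With the leftmost column thus forced to carry $1,\dots,c$ as in $S^0_{\alpha/\beta}$, the remaining entries $c+1,\dots,N$ have weakly decreasing row index as the value increases; together with the strict left-to-right increase within rows, this forces them to fill the non-leftmost cells in consecutive blocks from the top row downward, which is precisely the defining filling of $S^0_{\alpha/\beta}$.

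This identifies the terminal tableau of the descending process with $S^0_{\alpha/\beta}$, completing the construction of the chain and hence the proof. As an alternative to producing the chain one swap at a time, the explicit sequence of generators could instead be assembled from consecutive runs via \Cref{lem:consecutive-gens}; but the structural uniqueness of the reduced tableau is unavoidable and is where the real work lies.
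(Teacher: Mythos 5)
Your proof is correct, but it follows a genuinely different route from the paper's. The paper is constructive: it runs a straightening algorithm on $T$, first forcing the leftmost-column entries of $\alpha/\beta$ to agree with $S^0_{\alpha/\beta}$ (repeatedly writing $T=\pi_{x-1}(T_1)$ for the least mismatched column entry $x$), then correcting the rows one at a time from the top down; its monovariant is the number of entries already agreeing with $S^0_{\alpha/\beta}$, and its output is an explicit saturated chain from $S^0_{\alpha/\beta}$ to $T$ (the chain computed in \Cref{ex:skew-bot-to-T} and compressed via \Cref{lem:consecutive-gens}). You instead argue by termination plus uniqueness of the terminal object: (i) a downward cover at $v$ exists precisely when $v+1$ is strictly above $v$ and $v,v+1$ are not both in the leftmost column --- your case check that $s_v(T)$ stays in $\SIT(\alpha/\beta)$ is right, since consecutive entries in distinct rows never violate row-strictness, and the column constraint can only break when both lie in column $1$ of $\alpha$; (ii) the monovariant $\sum_v v\cdot\mathrm{row}_T(v)$ strictly drops at each step, so greedy descent halts at a reduced tableau; (iii) the only reduced tableau is $S^0_{\alpha/\beta}$. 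Step (iii), which you correctly identify as the real content, is sound: if $k$ is the least value off the column, then $f_i=i$ for $i<k$ and $f_k\ge k+1$; the seed bound $\mathrm{row}_T(k)\le \ell(\beta)+k-1$ holds because either $k$ lies in a row of height at most $\ell(\beta)$, or the column entry to its left in its row is some $f_i<k$, forcing $i\le k-1$; then weak row-decrease along the off-column run $k,k+1,\ldots,f_k-1$ gives $\mathrm{row}_T(f_k)\le\mathrm{row}_T(k)\le\ell(\beta)+k-1$, contradicting $\mathrm{row}_T(f_k)=\ell(\beta)+k$. The same weak row-decrease, combined with strict increase along rows, then forces $c+1,\ldots,N$ to occupy the non-column cells in consecutive blocks from the top row downward, which is $S^0_{\alpha/\beta}$. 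What the paper's method buys is the explicit canonical chain; what yours buys is a clean characterization of the minimal elements of $P\rdI_{\alpha/\beta}$ (there is exactly one), a perspective that also explains why uniqueness fails for the subposet on $\SET(\alpha/\beta)$: under the stronger all-columns-increase requirement fewer downward steps are admissible, so more tableaux are reduced, as in \Cref{fig:SkewSETPoset}.
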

Before we prove this, we illustrate how to construct a saturated chain from  $S^0_{\alpha/\beta}$ to any tableau $T$  in $\SIT({\alpha/\beta})$ by applying 0-Hecke operators for the $\rdI$-action. \svw{Lemma~\ref{lem:consecutive-gens}} is helpful in compressing the steps involving consecutive generators.
\begin{example}\label{ex:skew-bot-to-T}  Let $\alpha=(4,3,4,2,3)$, $\beta=(2,1,2)$, and
$$T=\tableau{5&8 &10\\3 &4\\\bullet &\bullet &6 &11\\ \bullet &1 &9\\
\bullet &\bullet &2 &7}\qquad S^0_{\alpha/\beta}=\tableau{2&3 &4\\1 &5\\\bullet &\bullet &6 &7\\ \bullet &8 &9\\
\bullet &\bullet  &10 &11}.$$
We work first to match column 1 of $T$ with column 1 of $S^0_{\alpha/\beta}$.  Using Lemma~\ref{lem:consecutive-gens}, we have,  starting with the lowest entry in column 1 that differs from $S^0_{\alpha/\beta}$, 
\[T\xleftarrow[]{\pi_2\pi_1}
\tableau{5&8 &10\\1 &4\\\bullet &\bullet &6 &11\\ \bullet &2 &9\\
\bullet &\bullet &3 &7}=T_1
\xleftarrow[]{\pi_4\pi_3\pi_2}
\tableau{2&8 &10\\1 &5\\\bullet &\bullet &6 &11\\ \bullet &3 &9\\
\bullet &\bullet &4 &7}=T_2.\]
To be clear about the sequence of generators, this means $T=\pi_2\pi_1(T_1)$ and $T_1=\pi_4\pi_3\pi_2(T_2)$.
Now we work downwards from the topmost row of length at least two, one entry at a time, starting with the smallest entry that does not match the one in  $S^0_{\alpha/\beta}$, to make this row agree with that of $S^0_{\alpha/\beta}$.  The entries that need to be replaced are first 8 and then 10.
\[T_2
\xleftarrow[]{\pi_7\pi_6\pi_5\pi_4\pi_3}
\tableau{2&3 &10\\1 &6\\\bullet &\bullet &7 &11\\ \bullet &4 &9\\
\bullet &\bullet &5 &8}=T_3\\
\xleftarrow[]{\pi_9\pi_8\pi_7\pi_6\pi_5\pi_4} 
\tableau{2&3 &4\\1 &7\\\bullet &\bullet &8 &11\\ \bullet &5 &10\\
\bullet &\bullet &6 &9}=T_4.
\]
That is, $T_2= \pi_7\pi_6\pi_5\pi_4\pi_3(T_3) $ and $T_3=\pi_9\pi_8\pi_7\pi_6\pi_5\pi_4(T_4)$.\\
Proceeding to the next highest row differing from $S^0_{\alpha/\beta}$, we have
\[T_4\xleftarrow[]{\pi_6\pi_5}
\tableau{2&3 &4\\1 &5\\\bullet &\bullet &8 &11\\ \bullet &6 &10\\
\bullet &\bullet &7 &9}=T_5
\xleftarrow[]{\pi_7\pi_6}
\tableau{2&3 &4\\1 &5\\\bullet &\bullet &6 &11\\ \bullet &7 &10\\
\bullet &\bullet &8 &9}=T_6,  \]
and thus $T_4=\pi_6\pi_5(T_5)$, $T_5=\pi_7\pi_6(T_6)$.
Finally we have 
\[T_6
\xleftarrow[]{\pi_{10}\pi_9\pi_8\pi_7} \tableau{2&3 &4\\1 &5\\\bullet &\bullet &6 &7\\ \bullet &8 &11\\
\bullet &\bullet  &9 &10}
= T_7 \xleftarrow[]{\pi_{10}\pi_9}
\tableau{2&3 &4\\1 &5\\\bullet &\bullet &6 &7\\ \bullet &8 &9\\
\bullet &\bullet  &10 &11}=S^0_{\alpha/\beta}.  
\]

Again, this means  
$T_6=\pi_{10}\pi_9\pi_8\pi_7(T_7)$ and $T_7 = \pi_{10}\pi_9(S^0_{\alpha/\beta})$. 

The final result is the following saturated chain from $S^0_{\alpha/\beta}$ to $T$.
\[T=(\pi_2\pi_1)\, (\pi_4\pi_3\pi_2)\, (\pi_7\pi_6\cdots\pi_3)\, (\pi_9\pi_8\cdots \pi_4)\, (\pi_6\pi_5)\, (\pi_7\pi_6)\, (\pi_{10}\pi_9\pi_8\pi_7)\, (\pi_{10}\pi_9)(S^0_{\alpha/\beta} )\]
\end{example}

The algorithm executed in the preceding \svw{example} identifies a unique saturated  chain from $S^0_{\alpha/\beta}$ to $T$, and is identical to the one in the proof of \cite[Proposition~6.6]{NSvWVW2024}.  Readers familiar with that argument may therefore safely skip the following proof of the validity of the algorithm, given here for completeness.

\begin{proof}[Proof of Proposition~\ref{prop:bot-elt-skew}] 
In what follows, when an integer  $a$ occupies row $p$ and column $q$ of $T,$ by the counterpart of $a$ in $S^0_{\alpha/\beta}$ we will mean the integer occupying the same cell, row $p$ and column $q$, in $S^0_{\alpha/\beta}.$ 
First assume $\ell(\alpha)\ne \ell(\beta)$. This means there are cells in the first column of $\alpha$ appearing in the first column of $\alpha/\beta$.  
\begin{enumerate}
    \item[Step 1:] We begin by making the first column of $T$ match the first column of $S^0_{\alpha/\beta}.$  Here it is important that some cells in the first column of  $\alpha$ remain in $\alpha/\beta$, so that $\ell(\alpha)-\ell(\beta)\ge 1$. Find the least $j$, $1\le j\le \ell(\alpha)-\ell(\beta),$ such that the entry $x$ in cell $(j+\ell(\beta),1)$ \svw{of $T$} is not equal to $j.$ 

    Then in $T$ we have that $x-1$ is in a lower row, not in column 1 by minimality of $j$, and the fact that rows increase left to right, and the first column increases. Hence $T=\pi_{x-1}(T_1),$ such that in $T_1\in\SIT({\alpha/\beta}),$  $x-1$ is now a descent, lying in a row  strictly \textit{higher} than $x.$  Now repeat this procedure until $x$ is replaced by $j.$ Then continue with the next entry in column 1 of $T$ \svw{that} does not match 
    in $S^0_{\alpha/\beta}.$  Clearly this process ends with a tableau $T_r=\pi_{x_r}\pi_{x_{r-1}}\cdots \pi_{x_1}(T),$ whose first column matches  column 1 of $S^0_{\alpha/\beta}.$
    
    Note that   if $T$ and $S^0_{\alpha/\beta}$ already agree in the first column \svw{then $T=T_r$,} so Step 1 is not necessary.     
    \item[Step 2:] First observe that $T_r$ and $S^0_{\alpha/\beta}$ now agree for all entries less than or equal to the positive number $\ell=\ell(\alpha)-\ell(\beta).$ Now consider the topmost row of length greater than 1, say row $k$.  Find the least entry, say $y,$ in this row of $T_r$ that differs from the corresponding entry in $S^0_{\alpha/\beta}.$ Note that $y$ is then necessarily larger than its counterpart in $S^0_{\alpha/\beta},$ by definition of the latter. Then \svw{$y> \ell+1$ because if $y=\ell +1$ then it would already be in the correct place by the definition of $S^0_{\alpha/\beta}$. Hence we are guaranteed that}  $y-1$ is strictly below $y$ in $T_r$ because we are in the topmost row possible, and rows increase left to right.  
    Hence 
    $T_r=\pi_{y-1}(T_{r+1})$ for $T_{r+1}\in \SIT({\alpha/\beta}),$ such that $y-1$ is a descent in $T_{r+1}$  strictly \textit{higher} than $y$.  We repeat this step until $y$ has been replaced by its counterpart in $S^0_{\alpha/\beta}.$  
    
    \item[Step 3:] Continue in this manner to the end of the row. We now have a sequence of operators $\pi_{i_j}$ and tableaux $T_j\in \SIT({\alpha/\beta})$ such that $T_{j-1}=\pi_{i_j}(T_j)$, and the final tableau $T_s$ agrees with $S^0_{\alpha/\beta}$ for all entries $\le \ell+(\alpha_k-1)$  if $\beta_k = 0$, or $\le \ell+(\alpha_k - \beta_k)$ if $\beta _k \geq 1$.
    
    \item[Step 4:] Proceed  downwards to the next row where an entry in $T_s$ differs from its counterpart in $S^0_{\alpha/\beta},$ and repeat Steps 2 and 3, until all rows \svw{match $S^0_{\alpha/\beta}$.} 
\end{enumerate}

Since at the end of each iteration of Step 3,  the number of entries that are in agreement  with $S^0_{\alpha/\beta}$ increases, we see that the algorithm produces a saturated chain from $S^0_{\alpha/\beta}$ to $T$ in the poset $P\rdI_{\alpha/\beta}$ as claimed. \svw{This completes the case $\ell(\alpha)\neq\ell(\beta)$.}

\svw{The case $\ell(\alpha)=\ell(\beta)$  is almost identical, with the simplification that we proceed directly to Step 2, since Step 1 is no longer needed. }\end{proof}

\svw{We now immediately conclude the following, using \Cref{prop:poset-to-composition-serieS} (2).}  The statement about the $\mathcal{A}^*$-action follows since, by \Cref{prop:skew-partial-order}, the $\rdI$- and $\mathcal{A}^*$-actions both determine the same  poset structure $P\rdI_{\alpha/\beta}$ on $\SIT(\alpha/\beta)$. 

\begin{theorem}\label{thm-rdI-skew} 
Let $\alpha\vDash n$, $\beta\vDash m$ with $\beta \subseteq \alpha$. Then $\mathcal{V}_{\alpha/\beta}$
is a \svw{cyclic}
$H_{n-m}(0)$-module \svw{generated} by the element 
$ S^0_{\alpha/\beta}$. 
The same statement holds for the module $\mathcal{A}_{\alpha/\beta}$ defined by the $\mathcal{A}^*$-action on $\SIT(\alpha/\beta)$. 
\end{theorem}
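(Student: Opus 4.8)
The plan is to deduce the result directly from the poset-theoretic machinery assembled earlier, so that essentially all of the combinatorial work is already carried by \Cref{prop:bot-elt-skew}. First I would verify that the two standing hypotheses of \Cref{prop:poset-to-composition-serieS} are in force for the $\rdI$-action on the basis $\SIT(\alpha/\beta)$: by \Cref{prop:skew-action} each generator $\pi_i^{\rdI}$ sends a skew standard immaculate tableau either to another such tableau or to $0$, and by \Cref{prop:skew-partial-order} the induced relation is genuinely a partial order, namely the skew immaculate Hecke poset $P\rdI_{\alpha/\beta}$. These are exactly the assumptions under which \Cref{prop:poset-to-composition-serieS} applies.

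Next I would invoke \Cref{prop:bot-elt-skew}, which supplies more than the bare existence of a least element: for every $T\in\SIT(\alpha/\beta)$ with $T\ne S^0_{\alpha/\beta}$ it produces an explicit product of $\rdI$-generators realizing $T=\pi_{j_1}\cdots\pi_{j_r}(S^0_{\alpha/\beta})$, and it identifies $S^0_{\alpha/\beta}$ as the unique minimal element of $P\rdI_{\alpha/\beta}$. Feeding this into \Cref{prop:poset-to-composition-serieS}(2) then immediately yields that $\mathcal{V}_{\alpha/\beta}=\spam\{T:T\in\SIT(\alpha/\beta)\}$ is cyclic and generated by $S^0_{\alpha/\beta}$.

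For the $\mathcal{A}^*$-action I would argue that no new combinatorics is required. By \Cref{prop:skew-partial-order} the $\rdI$- and $\mathcal{A}^*$-actions determine the \emph{same} poset $P\rdI_{\alpha/\beta}$ on $\SIT(\alpha/\beta)$; in particular they share the unique minimal element $S^0_{\alpha/\beta}$. Since cyclic generation from the bottom element is a purely order-theoretic consequence of \Cref{prop:poset-to-composition-serieS}(2), and the $\mathcal{A}^*$-action also satisfies the basis-preserving hypothesis by \Cref{prop:skew-action}, the same conclusion applies to $\mathcal{A}_{\alpha/\beta}$, again generated by $S^0_{\alpha/\beta}$.

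I do not anticipate a serious obstacle, since the substantive content, the explicit straightening chains from $S^0_{\alpha/\beta}$ to an arbitrary $T$, is already established in \Cref{prop:bot-elt-skew}. The one point that merits care is logical rather than computational: \Cref{prop:poset-to-composition-serieS}(2) infers cyclicity from the existence of a unique least element, so I would be sure to cite the explicit product-of-generators statement of \Cref{prop:bot-elt-skew}, not merely the uniqueness of the minimum, and to note that when passing to the $\mathcal{A}^*$-action the equality of the two posets is what guarantees the same generator, even though the individual operators $\pi_i^{\mathcal{A}^*}$ act differently from the $\pi_i^{\rdI}$.
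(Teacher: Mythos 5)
Your proposal is correct and follows essentially the same route as the paper: the paper likewise derives cyclicity of $\mathcal{V}_{\alpha/\beta}$ by combining the straightening algorithm of \Cref{prop:bot-elt-skew} with \Cref{prop:poset-to-composition-serieS}(2), and handles $\mathcal{A}_{\alpha/\beta}$ by noting via \Cref{prop:skew-partial-order} that the $\rdI$- and $\mathcal{A}^*$-actions yield the same poset $P\rdI_{\alpha/\beta}$, hence the same generator $S^0_{\alpha/\beta}$. Your extra caution about citing the explicit product-of-generators statement is harmless but not strictly needed, since the partial order in \Cref{prop:poset-to-composition-serieS} is defined by reachability under the generators, so uniqueness of the least element already encodes cyclic generation.
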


\begin{proposition}\label{prop:top-elt-skew} 
Let $\alpha\vDash n$, $\beta \vDash m$ with $\beta \subseteq \alpha$. Consider the skew standard immaculate tableau $ S^{row}_{\alpha/\beta}$.  Then for any $T\in \SIT({\alpha/\beta})$ where $T\ne S^{row}_{\alpha/\beta},$ there is a sequence of generators $\pi_{j_i}, $ and distinct tableaux $T_i\in \SIT({\alpha/\beta}),$ $i=1,\ldots, r,$ such that $\pi_{j_i}(T_{i-1})=T_{i}, i=1,2,\ldots ,r,$ where we set $T_{0}=T$ and 
$T_r=S^{row}_{\alpha/\beta}$. Hence we conclude 
\[ T \poRI S^{row}_{\alpha/\beta} \text{ and } \pi_{j_r}\pi_{j_{r-1}}\cdots\pi_{j_1}(T)=S^{row}_{\alpha/\beta}.\]
In particular, $ S^{row}_{\alpha/\beta}$ is the unique maximal element of the poset $P\rdI_{\alpha/\beta}.$
\end{proposition}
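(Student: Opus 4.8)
The plan is to mirror the proof of \Cref{prop:bot-elt-skew}, but now ascending to $S^{row}_{\alpha/\beta}$ rather than descending to $S^0_{\alpha/\beta}$. The cleanest route is to first characterise the tableaux that admit \emph{no} upward $\rdI$-move. Recall from \Cref{prop:skew-action} and \Cref{table:All4Imm} that, for the $\rdI$-action, $\pi_i(T)=s_i(T)$ precisely when $i+1$ lies strictly below $i$ in $T$; in that case $i$ and $i+1$ occupy different rows, so $s_i(T)\in\SIT(\alpha/\beta)$ and the move never returns $0$ (the only vanishing case is when $i,i+1$ share a row). Consequently a tableau $T\in\SIT(\alpha/\beta)$ admits no upward move, i.e.\ no $i$ with $T\prec_{\rdI_{\alpha/\beta}}\pi_i(T)\neq T$, if and only if for every $i$ the entry $i+1$ lies weakly above $i$.

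I would next show that this fixed-point condition singles out $S^{row}_{\alpha/\beta}$. The requirement that $i+1$ be weakly above $i$ for all $i$ says exactly that the row index of the entry $v$ is weakly increasing in $v$. Hence the entries of each row form a consecutive block of integers, and reading the non-empty rows from bottom to top assigns these blocks in increasing order; since rows increase strictly from left to right, each block is laid down in increasing order. This is precisely the row-superstandard filling of \Cref{def:skewels}, so $S^{row}_{\alpha/\beta}$ is the unique element of $\SIT(\alpha/\beta)$ admitting no upward move. The Proposition then follows quickly: starting from any $T$, repeatedly apply an upward move $\pi_i$ whenever some $i+1$ lies strictly below $i$. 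Each such move is a cover relation strictly increasing the rank in the finite poset $P\rdI_{\alpha/\beta}$, so no tableau recurs and the process terminates; it can only terminate at a tableau with no upward move, which by the previous step is $S^{row}_{\alpha/\beta}$. The tableaux visited along the way supply the generators $\pi_{j_i}$ and the distinct intermediate tableaux $T_i$ demanded by the statement, whence $T\poRI S^{row}_{\alpha/\beta}$. Since any maximal element admits no upward move, the same characterisation forces every maximal element to equal $S^{row}_{\alpha/\beta}$, so it is the unique maximal element, and the conclusion for the $\mathcal{A}^*$-action follows because both actions determine the same poset by \Cref{prop:skew-partial-order}.

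The main obstacle is verifying the block/initial-segment argument of the second paragraph \emph{in the skew setting}, where missing cells and staggered row offsets could in principle obstruct the claim that weak monotonicity of the row index forces a consecutive block in the lowest non-empty row; one checks that the smallest unused value is always trapped in that row (it cannot jump higher while lower cells remain, since all larger values sit weakly higher), so the argument goes through verbatim. A secondary point to confirm is the enabling fact from \Cref{table:All4Imm} that the swap stays in $\SIT(\alpha/\beta)$ whenever $i+1$ is strictly below $i$, including when $i$ or $i+1$ sits in the leftmost column of the skew shape; this is a short local check that both the strict row increase and the bottom-to-top increase of the leftmost column survive the transposition. As an alternative to the termination argument, one can make the ascent fully explicit, mirroring \Cref{prop:bot-elt-skew} by matching the rows of $T$ to those of $S^{row}_{\alpha/\beta}$ from the bottom row upward, which would parallel \Cref{ex:skew-bot-to-T}.
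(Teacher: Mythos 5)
Your proof is correct, but it follows a genuinely different route from the paper's. The paper proves \Cref{prop:top-elt-skew} by an explicit straightening algorithm: it compares $T$ with $S^{row}_{\alpha/\beta}$ row by row from the top down, locates the \emph{largest} entry $x$ in the topmost discrepant row that disagrees with its counterpart, observes that $x+1$ must then lie strictly below $x$, applies $\pi_x$, and iterates; this manufactures one explicit saturated chain from $T$ up to $S^{row}_{\alpha/\beta}$ (the algorithm illustrated in \Cref{ex:skew-T-to-top}, parallel to the proof of \Cref{prop:bot-elt-skew}). You instead argue by a ``unique sink'' principle: (i) a tableau admits no strict upward $\rdI$-move if and only if every $i+1$ sits weakly above $i$; (ii) that condition forces the rows, read bottom to top, to carry consecutive blocks of $1,\ldots,n-m$, laid out increasingly within each row, hence forces the tableau to be $S^{row}_{\alpha/\beta}$ (your block argument survives empty rows and offsets in the skew shape, as you note); (iii) greedy ascent from any $T$ therefore terminates, and can only terminate at $S^{row}_{\alpha/\beta}$. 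Your preliminary check that $s_i(T)\in\SIT(\alpha/\beta)$ whenever $i+1$ is strictly below $i$ (so upward moves never vanish, even involving the leftmost column) is exactly the fact recorded in \Cref{table:All4Imm} and in the explicit case description of $\pi_i^{\rdI}$ given before \Cref{prop:bot-elt-skew}. What each approach buys: yours is shorter and more conceptual, and the fixed-point characterisation of $S^{row}_{\alpha/\beta}$ is of independent interest; the paper's yields a canonical explicit chain and keeps the two straightening proofs structurally parallel. One small repair: your termination step invokes the \emph{rank} of $P\rdI_{\alpha/\beta}$, but the rank function (via inversions of the reading word) is only established later in the paper (\Cref{lem:skew-rank-phi}, \Cref{prop:poset-rank-Nadia}). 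This is cosmetic, since all you need is that each move produces a strictly larger element of a finite poset --- which follows from antisymmetry of the partial order already established in \Cref{prop:skew-partial-order} --- or, alternatively, that each swap of $i,i+1$ with $i+1$ strictly below $i$ increases $\inv(\rw(T))$ by exactly one.
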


Again we give an example illustrating the straightening algorithm that produces a saturated chain of 0-Hecke operators for the $\rdI$-action, going from a tableau $T\in \SIT({\alpha/\beta})$ 
to the tableau $ S^{row}_{\alpha/\beta}$.

\begin{example}\label{ex:skew-T-to-top}
Let $\alpha=(4,3,4,2,3)$, $\beta=(2,1,2,1)$, and $$T=\tableau{4&7&9\\\bullet &3\\\bullet &\bullet &5&10\\ \bullet &1 &8\\
\bullet &\bullet &2 &6} \qquad S^{row}_{\alpha/\beta}= \tableau{8&9 &10\\  \bullet &7 \\  \bullet &\bullet &5 &6\\ \bullet &3 &4\\
\bullet &\bullet &1 &2}.$$

Start with the largest entry in the topmost row of $T$ that differs from $S^{row}_{\alpha/\beta}$, in this case 9, and apply $\pi_9 $ to $T$. This gives 
\[T=\tableau{4&7&9\\ \bullet &3\\\bullet &\bullet &5&10\\ \bullet &1 &8\\
\bullet &\bullet &2 &6} 
\xrightarrow[]{\pi_9}
\tableau{4&7 &10\\  \bullet &3 \\  \bullet &\bullet &5 &9\\ \bullet &1 &8\\
\bullet &\bullet &2 &6}=T_1.\]
Repeat this step until the topmost row agrees with $S^{row}_{\alpha/\beta}$.  This means we compute $\pi_8\pi_7(T_1)=T_2$ and then $\pi_7\pi_6\pi_5\pi_4(T_2)=T_3$.  Using Lemma~\ref{lem:consecutive-gens}, we have 
\[T_1\xrightarrow[]{\pi_8\pi_7}
\tableau{4&9 &10\\  \bullet &3 \\  \bullet &\bullet &5 &8\\ \bullet &1 &7\\
\bullet &\bullet &2 &6}=T_2
\xrightarrow[]{\pi_7\pi_6\pi_5\pi_4}
\tableau{8&9 &10\\  \bullet &3 \\  \bullet &\bullet &4 &7\\ \bullet &1 &6\\
\bullet &\bullet &2 &5}
=T_3.
\]
Now move to the \svw{second row from} the top.  To make the 3 \svw{in $T_3$} match the 7 in $S^{row}_{\alpha/\beta}$, we must compute $T_4=\pi_6\pi_5\pi_4\pi_3(T_3)$, giving, by Lemma~\ref{lem:consecutive-gens}, 
\[T_3\xrightarrow[]{\pi_6\pi_5\pi_4\pi_3}
\tableau{8&9 &10\\  \bullet &7 \\  \bullet &\bullet &3 &6\\ \bullet &1 &5\\
\bullet &\bullet &2 &4}
=T_4.
\]
Continuing in this way $T_5=\pi_4\pi_3(T_4)$ and \svw{finally} $ S^{row}_{\alpha/\beta}=\pi_2\pi_1(T_5) $, that is,
\[T_4 
\xrightarrow[]{\pi_4\pi_3}
\tableau{8&9 &10\\  \bullet &7 \\  \bullet &\bullet &5 &6\\ \bullet &1 &4\\
\bullet &\bullet &2 &3}=T_5
\xrightarrow[]{\pi_2\pi_1}
\tableau{8&9 &10\\  \bullet &7 \\  \bullet &\bullet &5 &6\\ \bullet &3 &4\\
\bullet &\bullet &1 &2}=S^{row}_{\alpha/\beta}.
\]
The full chain from $T$ to $S^{row}_{\alpha/\beta}$ is then
\[S^{row}_{\alpha/\beta}=(\pi_2\pi_1  )( \pi_4\pi_3 ) (\pi_6\pi_5\pi_4\pi_3 )  ( \pi_7\pi_6\pi_5\pi_4)   ( \pi_8\pi_7) \pi_9(T).\]
\end{example}

{In this case the algorithm, identical to the one in the proof of \cite[Proposition~6.11]{NSvWVW2024}, is simpler to describe.} Again, readers familiar with that proof may safely skip the argument below.
\begin{proof}[Proof of Proposition~\ref{prop:top-elt-skew}] 
 \svw{We} start with the topmost row of $T$ that differs from $ S^{row}_{\alpha/\beta},$ and find the \textit{largest} entry, say $x$, in that row that differs from its counterpart in $ S^{row}_{\alpha/\beta}.$ \svw{Then necessarily $x+1$ is in a lower row of $T=T_0,$ because $x$ is the largest and topmost entry that differs from its counterpart in $ S^{row}_{\alpha/\beta}.$}  Hence $T_1=\pi_x(T_0)$, with $T_1\in \SIT({\alpha/\beta})$ the tableau obtained from $T$ by switching $x, x+1.$  We continue in this manner along each row, top to bottom, right to left.  Note that at the start of a new row $j,$ the tableau $T_k$ agrees with $ S^{row}_{\alpha/\beta}$ \svw{in the top $j-1$ rows.}

It is straightforward to check that this algorithm terminates in the tableau $ S^{row}_{\alpha/\beta}$, producing a saturated chain in the poset from $T$ to $ S^{row}_{\alpha/\beta}.$
\end{proof}

Recall from \Cref{table:All4Imm} and \Cref{prop:skew-action}  that the $\pi^{\dI}$-action is defined on $\SIT(\alpha/\beta)$ explicitly by 
\[\pi^{\dI}_i(T)=\begin{cases} T, &  \text{if $i+1$ is weakly  below $i$ in $T$},\\
                        s_i(T), & \text{if $i+1$ is strictly above $i$ in $T$, and $i, i+1$ are not in column 1 of $\alpha$},\\
                        0, & \text{otherwise}.
 \end{cases}\]
  From Lemma~\ref{lem:sameposet-skew} and Proposition~\ref{prop:top-elt-skew}, using \Cref{prop:poset-to-composition-serieS} \svw{(2)}, we immediately have the following.  The statement about the $\bA^*$-action follows since, by  \Cref{rem:dI-action and barA-action}, the $\dI$- and $\bA^*$-actions both determine the same  poset structure $P\dI_{\alpha/\beta}$ on $\SIT(\alpha/\beta)$.
\begin{theorem}\label{thm-dI-skew} Let $\alpha \vDash n, \beta \vDash m$ with $\beta \subseteq \alpha$. Then $\mathcal{W}_{\alpha/\beta}$ is a \svw{cyclic} $H_{n-m}(0)$-module \svw{generated} by the  \svw{element} $S^{row}_{\alpha/\beta}$. The same statement holds for the module $\bA_{\alpha/\beta}$ defined by the $\bA^*$-action on $\SIT(\alpha/\beta)$ .  
\end{theorem}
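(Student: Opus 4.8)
The plan is to deduce this theorem as the ``descending'' counterpart of \Cref{thm-rdI-skew}, via \Cref{prop:poset-to-composition-serieS}~(2). First I would recall the explicit $\dI$-action displayed just above the statement: its only nontrivial move sends $T$ to $s_i(T)$ precisely when $i+1$ lies strictly above $i$ (and $i,i+1$ are not both in column~1), producing a tableau in which $i+1$ now lies weakly below $i$. Comparing this with the cover relation of \Cref{rem:dI-action and barA-action}, namely $S\prec_{\dI _{\alpha/\beta}} T$ if and only if $\phi(S)=\pi^{\dI}_{i+m}(\phi(T))$, shows that each such application of a $\dI$-generator realises a \emph{downward} cover in $P\dI_{\alpha/\beta}$. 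Consequently the partial order produced by the $\dI$-action in the sense of \Cref{prop:poset-to-composition-serieS} (where $S\po T$ whenever $T$ is reached from $S$ by applying generators) is exactly the order-dual of $P\dI_{\alpha/\beta}$.

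Next I would locate the unique least element of this dual order, which is the unique \emph{greatest} element of $P\dI_{\alpha/\beta}$. By \Cref{lem:sameposet-skew} the posets $P\dI_{\alpha/\beta}$ and $P\rdI_{\alpha/\beta}$ coincide, and by \Cref{prop:top-elt-skew} the unique maximal element of $P\rdI_{\alpha/\beta}$ is $S^{row}_{\alpha/\beta}$. Hence $S^{row}_{\alpha/\beta}$ is the unique least element for the $\dI$-action, and \Cref{prop:poset-to-composition-serieS}~(2) then gives that $\mathcal{W}_{\alpha/\beta}=\spam\{T:T\in\SIT(\alpha/\beta)\}$ is cyclic, generated by $S^{row}_{\alpha/\beta}$. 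Concretely, reversing the saturated $\rdI$-chain furnished by \Cref{prop:top-elt-skew} turns it into a sequence of $\dI$-generators descending from $S^{row}_{\alpha/\beta}$ to any prescribed $T$, which is the explicit realisation of cyclicity.

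For the $\bA^*$-statement I would invoke \Cref{rem:dI-action and barA-action}, which records that the $\dI$- and $\bA^*$-actions determine the identical poset $P\dI_{\alpha/\beta}$, with the $\bA^*$-generators likewise acting downward. The argument of the previous paragraph therefore applies verbatim, yielding that $\bA_{\alpha/\beta}$ is cyclic with the same generator $S^{row}_{\alpha/\beta}$.

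The main obstacle, such as it is, is bookkeeping about direction rather than any substantive difficulty: one must be careful that the $\dI$- and $\bA^*$-actions move \emph{down} the poset (unlike the $\rdI$- and $\mathcal{A}^*$-actions of \Cref{thm-rdI-skew}, which move up), so that the cyclic generator is the top element $S^{row}_{\alpha/\beta}$ rather than the bottom element $S^0_{\alpha/\beta}$. Once the order-dual is correctly identified, everything follows immediately from the three quoted results, which is why the statement can be asserted without further calculation.
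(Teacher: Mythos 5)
Your proposal is correct and takes essentially the same route as the paper, which likewise deduces the theorem from \Cref{lem:sameposet-skew}, \Cref{prop:top-elt-skew} and \Cref{prop:poset-to-composition-serieS}~(2), and handles $\bA_{\alpha/\beta}$ via \Cref{rem:dI-action and barA-action}. Your careful bookkeeping of the order-dual direction of the $\dI$-action (so that the cyclic generator is the top element $S^{row}_{\alpha/\beta}$) simply makes explicit what the paper asserts is immediate.
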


Recall that \svw{the \emph{inversion set} of a permutation $\sigma \in S_n$} is
$$\mathrm{Inv} (\sigma) = \{ (p,q)\suchthat 1\leq p < q \leq n \mbox{ and } \sigma(p) > \sigma (q)\}.$$The \emph{number of inversions} is denoted by $\ninv(\sigma) = |\mathrm{Inv}(\sigma)|.$  

\begin{example}\label{ex:inv} 
If $\svw{\sigma = 5\ 4\ 2\ 3\ 1 \in S_5}$, then $\ninv(\sigma) = 9$ from 
$$\mathrm{Inv}(\sigma) = \{(1,2), (1,3), (1,4), (1,5),  (2,3), (2,4), (2,5), (3,5), (4,5)\}.$$
\end{example} 

The \emph{reading word} $\rw(T)$ of a tableau $T$ is defined to be the entries of $T$ read from right to left along rows, and from top to bottom. 
With this in mind, we therefore define the number of inversions of a standard tableau $T $ to be the number of inversions in its reading word $\rw(T)$, and simply write $\inv(T)$ for $\inv \rw(T)$. 

In \cite[Sections 5 and 6]{NSvWVW2024} it was shown that the immaculate Hecke poset $P\rdI_\alpha$ is ranked, with the rank of a standard immaculate tableau $T\in \SIT(\alpha)$  given by the number of inversions in the {reading word} $\rw(T)$ of $T$. The map $\phi$ allows us to conclude that the skew immaculate Hecke poset $P\rdI_{\alpha/\beta}$ is also ranked, with the same rank function.  

Observe that the specification of $\phi$ in Definition~\ref{def:map-skew-to-straight} depends on the choice of a fixed filling of the diagram of $\beta $. Indeed, every $U\in\SIT(\beta)$ determines a unique map $\phi_U:\SIT(\alpha/\beta)\rightarrow \SIT(\alpha)$ for which the statement of  
 Proposition~\ref{prop:skew-action} holds. 

For $\alpha \vDash n, \beta \vDash m$ with $\beta \subseteq \alpha$, let $T\in \SIT(\alpha/\beta)$, and $U\in \SIT(\beta)$. The reading word $\rw $ of $\phi_U(T)$ in $\SIT(\alpha)$ contains inversions of the form $(i,j)$ where $i>m$ and $j\le m$.  These inversions are clearly independent of the choice of $T$ and $U$. We denote their number by $\inv(\alpha, \beta)$,
and examples of such inversions can be seen in Example~\ref{ex:Skew-map-phi}.

\begin{example}\label{ex:Skew-map-phi}  Recall  Example~\ref{ex:skewSIT-to-straightSIT}, with $U\in \SIT(\beta)$ in bold.
\[\text{Let } T=\tableau{3& 5&7 &8\\ 2 & 6\\ \bullet &\bullet &1\\ \bullet  &4\\\bullet &\bullet}\in \SIT(\alpha/\beta).\quad  \text{Then } \phi(T)=\tableau{8& 10 & 12 & 13\\7 & 11\\ {\bm 4} &{\bm 5} &6\\{\bm 3} & 9\\{\bm 1}&{\bm 2}} \in \SIT (\alpha).\]

We have that  $\phi(T)=\phi_U(T)$ for $U=S^{row}_\beta$. Then 
\[\rw(\phi_U(T))= \svw{13\ 12\ 10\ 8\ 11\  7\,} 6\, \bm{5}\, \bm{4}\,  9\, \bm{3}\, \bm{2}\, \bm{1},
\rw(T)= 8\, 7\, 5\, 3\, 6 \, 2\, 1\, 4,  
\rw(U)=\bm{5}\, \bm{4}\, \bm{3}\, \bm{2}\, \bm{1}.\]
In $\phi_U(T)$,  the 7 entries in $\alpha/\beta$ in rows 3,4,5 each create 5 inversions with the entries in $\beta$, and the entry 9 creates 3 inversions with the entries in $\beta$, so we have 
\[\inv(\alpha, \beta)=7(5)+3=38.\]  
\end{example}

More generally, letting $\ell=\ell(\beta)$ and $\gamma_i=\alpha_i-\beta_i, i=1, \ldots \ell$, one checks that one has the formula
\[\inv(\alpha,\beta)= \gamma_1\cdot \beta_1 +\gamma_2 \cdot (\beta_2+\beta_1) +\cdots+\gamma_\ell\cdot (\beta_\ell+\beta_{\ell-1}+\cdots+\beta_1) 
+(\alpha_{\ell+1}+\cdots+\alpha_{\ell(\alpha)})(\beta_\ell+\beta_{\ell-1}+\cdots+\beta_1)\]
where $\inv(\alpha, \beta)$ is defined to be the number of inversions in the reading word of $\phi(T)$ arising from pairs $(i,j)$ such that $i>m$ and $j\le m$. 

The following lemma is straightforward to verify.

\begin{lemma}\label{lem:skew-rank-phi} Let $\alpha, \beta$ be compositions with $\beta \subseteq \alpha$. Fix $U\in \SIT(\beta)$. Consider the map $\phi_U:\SIT(\alpha/\beta)\rightarrow \SIT(\alpha)$ as described above.
Let $T\in \SIT(\alpha/\beta)$.  Then 
\[\inv (\phi(T))=\inv (T) + \inv (U) +\inv(\alpha, \beta)\]
where $\inv(\alpha, \beta)$ is defined as above. 
\end{lemma}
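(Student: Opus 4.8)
The plan is to classify the inversions of the reading word $\rw(\phi_U(T))$ according to the two values they involve, and to match each class with one of the three terms on the right-hand side. Writing $m=|\beta|$, recall that $\phi_U$ places the entries $1,\dots,m$ (according to $U$) in the cells of $\beta$, and the entries $m+1,\dots,n$ (namely $i+m$ for each entry $i$ of $T$) in the cells of $\alpha/\beta$. Thus every value of $\phi_U(T)$ is either a $\beta$-value (at most $m$) or an $\alpha/\beta$-value (greater than $m$), and every $\alpha/\beta$-value exceeds every $\beta$-value. I would therefore split the inversions of $\rw(\phi_U(T))$ into three classes: those between two $\alpha/\beta$-values, those between two $\beta$-values, and the cross inversions between one of each.

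First I would show that the inversions between two $\alpha/\beta$-values contribute exactly $\inv(T)$. The key point is that restricting $\rw(\phi_U(T))$ to the cells of $\alpha/\beta$ reproduces $\rw(T)$, up to the global shift of all entries by $m$. This holds because the reading word is determined solely by cell position (rows read top to bottom, entries read right to left within a row), and in each row the $\beta$-cells are leftmost while the $\alpha/\beta$-cells lie to their right; deleting the $\beta$-cells thus leaves the relative reading order of the $\alpha/\beta$-cells unchanged. Since the shift by $m$ preserves every order comparison, the inversions among $\alpha/\beta$-values biject with the inversions of $T$. The same argument, applied to the $\beta$-cells, shows that the inversions between two $\beta$-values restrict to $\rw(U)$ and hence number $\inv(U)$.

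Next I would treat the cross inversions. Since every $\alpha/\beta$-value is larger than every $\beta$-value, a cross pair is an inversion precisely when the $\alpha/\beta$-cell precedes the $\beta$-cell in the reading word. Whether this happens depends only on the two cell positions, not on the fillings $T$ and $U$; the number of such pairs is therefore an invariant of the shapes $\alpha$ and $\beta$, which is exactly the quantity $\inv(\alpha,\beta)$ isolated above. Adding the three contributions gives $\inv(\phi_U(T))=\inv(T)+\inv(U)+\inv(\alpha,\beta)$.

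The bookkeeping here is routine, and the hard part --- such as it is --- will be making the restriction claim airtight: that deleting one block of cells leaves the relative reading order of the other block intact. This follows immediately once one notes that the reading word is a linear order on cells that depends only on their rows and columns, so it restricts compatibly to any subset of cells; I anticipate no serious obstacle beyond recording this observation.
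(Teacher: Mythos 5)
Your proof is correct and follows exactly the decomposition the paper has in mind: the paper states the lemma is "straightforward to verify" after setting up precisely your three-way split, defining $\inv(\alpha,\beta)$ as the count of cross inversions between values $>m$ and values $\le m$, which it notes are independent of $T$ and $U$. Your observation that the reading order on cells restricts compatibly to the $\alpha/\beta$-cells and to the $\beta$-cells, so that the two "pure" classes contribute $\inv(T)$ and $\inv(U)$ respectively, is exactly the verification the paper leaves to the reader.
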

The poset $P\rdI_{\alpha/\beta}$ is therefore ranked by the number of inversions $\inv(T)=\inv \rw(T)$ for $T\in \SIT(\alpha/\beta)$, and  the next result generalises the formula in \cite[Remark~6.14]{NSvWVW2024} for the rank of the poset $P\rdI_\alpha$.

\begin{proposition}\label{prop:poset-rank-Nadia} Let $\alpha, \beta$ be compositions with $\beta \subseteq \alpha$.
    Then the poset $P\rdI_{\alpha/\beta}$ is graded, and the rank  of the 
    skew standard  immaculate tableau $T$ is given by $\inv (T)-\inv (S_{\alpha/\beta}^0)$. 
    
    In particular,      $\inv(S^{row}_{\alpha/\beta})=\binom{|\alpha|-|\beta|}{2}$, and
    \[\inv (S_{\alpha/\beta}^0)=\sum_{i=1}^{\ell(\beta)} \binom{\alpha_i - \beta_i}{2}+\sum_{i = 
    \ell(\beta)+1}^{\ell(\alpha)}  \binom{\alpha_i+i-\ell(\beta)-1}{2} -
    \binom{\ell(\alpha) - \ell(\beta)}{3}.\]
    Hence the rank  
    of the poset $P\rdI_{\alpha/\beta}$ is \[\binom{|\alpha|-|\beta|}{2}- 
    \sum_{i=1}^{\ell(\beta)} \binom{\alpha_i - \beta_i}{2} - \sum_{i = 
    \ell(\beta)+1}^{\ell(\alpha)}  \binom{\alpha_i+i-\ell(\beta)-1}{2} + 
    \binom{\ell(\alpha) - \ell(\beta)}{3}.\]
\end{proposition}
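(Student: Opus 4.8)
The plan is to derive the gradedness and rank function from the machinery already assembled, and then to evaluate the two extremal inversion numbers directly from reading words. For the first assertion, recall the paragraph preceding the statement: by Lemma~\ref{lem:skew-rank-phi} the map $\phi_U$ shifts $\inv$ by the constant $\inv(U)+\inv(\alpha,\beta)$, so it embeds $P\rdI_{\alpha/\beta}$ as a subposet of $P\rdI_\alpha$ on which $\inv$ increases by exactly $1$ along each cover, just as it does in $P\rdI_\alpha$. Since $S^0_{\alpha/\beta}$ is the unique minimal element by Proposition~\ref{prop:bot-elt-skew}, subtracting its inversion number normalises the bottom rank to $0$, so the rank of $T$ is $\inv(T)-\inv(S^0_{\alpha/\beta})$; since $S^{row}_{\alpha/\beta}$ is the unique maximal element by Proposition~\ref{prop:top-elt-skew}, the rank of the whole poset is $\inv(S^{row}_{\alpha/\beta})-\inv(S^0_{\alpha/\beta})$. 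It remains only to evaluate these two quantities and subtract.

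For $S^{row}_{\alpha/\beta}$ the rows are filled with consecutive integers bottom to top, so higher rows carry strictly larger entries while each row increases left to right; reading top to bottom and right to left therefore produces the strictly decreasing word $(n-m)(n-m-1)\cdots 1$, whose inversion number is $\binom{n-m}{2}=\binom{|\alpha|-|\beta|}{2}$. The substantive computation is $\inv(S^0_{\alpha/\beta})$. Write $\ell=\ell(\alpha)$ and $k=\ell(\beta)$. The surviving first-column cells lie in rows $k+1,\dots,\ell$ and carry the values $1,\dots,\ell-k$, with row $i$ carrying $i-k$, while all non-first-column cells carry the larger values $\ell-k+1,\dots,n-m$, assigned consecutively top to bottom and left to right.

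I would partition the inversions of $\rw(S^0_{\alpha/\beta})$ into three classes. First, among non-first-column entries: within each row these form a decreasing run, while higher rows carry smaller values, so only same-row pairs invert, contributing $\sum_{i\le k}\binom{\alpha_i-\beta_i}{2}+\sum_{i>k}\binom{\alpha_i-1}{2}$. Second, between a first-column entry $i-k$ (in row $i>k$) and a non-first-column entry: since every non-first-column value exceeds $\ell-k$ and $i-k$ is read last in its row, such a pair inverts precisely when the non-first-column entry lies in row $i$ or in a higher row, contributing $\sum_{i>k}(i-k)(\alpha_i-1)$. Third, among first-column entries: in reading order they appear as $(\ell-k)(\ell-k-1)\cdots 1$, so all $\binom{\ell-k}{2}$ pairs invert. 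Adding these and simplifying via $\binom{a+t}{2}=\binom{a}{2}+at+\binom{t}{2}$, the hockey-stick identity $\sum_{t=1}^{\ell-k}\binom{t}{2}=\binom{\ell-k+1}{3}$, and Pascal's rule $\binom{\ell-k+1}{3}-\binom{\ell-k}{3}=\binom{\ell-k}{2}$ combines the first two classes, for each $i>k$, into $\binom{\alpha_i+i-k-1}{2}-\binom{i-k}{2}$; summing the correction terms and absorbing the third class leaves the single surplus $-\binom{\ell-k}{3}$, yielding exactly the stated formula. Subtracting it from $\binom{|\alpha|-|\beta|}{2}$ gives the rank of $P\rdI_{\alpha/\beta}$.

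I expect the main obstacle to be the bookkeeping of the second class: one must verify carefully that a first-column entry inverts only with the non-first-column entries weakly above it (and never with any entry in the rows $1,\dots,k$ below the first-column block, which are read later but carry larger values), and then match the resulting double sum $\sum_{j>k}(j-k)(\alpha_j-1)$ to the clean closed form through the reindexing and the two binomial identities above. The $S^{row}$ computation and the gradedness statement are comparatively routine.
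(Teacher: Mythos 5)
Your proof is correct, and it takes a genuinely different route to the key formula than the paper does. Your three-class enumeration of the inversions of $\rw(S^0_{\alpha/\beta})$ checks out: non-first-column entries invert only within rows, giving $\sum_{i\le \ell(\beta)}\binom{\alpha_i-\beta_i}{2}+\sum_{i>\ell(\beta)}\binom{\alpha_i-1}{2}$; a first-column entry inverts exactly with the non-first-column entries weakly above it, giving (after swapping the order of summation) $\sum_{i>\ell(\beta)}(i-\ell(\beta))(\alpha_i-1)$; the first-column entries contribute $\binom{\ell(\alpha)-\ell(\beta)}{2}$; and the expansion $\binom{a+t}{2}=\binom{a}{2}+at+\binom{t}{2}$ together with the hockey-stick and Pascal identities does collapse the total to the stated closed form. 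The paper, by contrast, never enumerates the reading-word inversions of $S^0_{\alpha/\beta}$ at all. It computes the rank of the poset as the number of pairs of cells whose relative order is \emph{not} forced in every skew standard immaculate tableau: it defines, for each row $i$, a \emph{hook} consisting of row $i$ together with the surviving first-column cells weakly below it, observes that the forced pairs are exactly the same-hook pairs, and counts these by inclusion-exclusion --- $\binom{\alpha_i+i-\ell(\beta)-1}{2}$ for each hook with a first-column corner, $\binom{\alpha_i-\beta_i}{2}$ for each cornerless row, minus $\binom{\ell(\alpha)-\ell(\beta)}{3}$ to correct for first-column pairs lying in several hooks. That argument produces each term of the formula directly with no algebraic simplification, and it explains the terms combinatorially; your computation is more elementary and self-contained, and it additionally verifies on the nose that the inversions of $S^0_{\alpha/\beta}$ coincide with the forced pairs (a fact the paper leaves implicit), but it pays for this with exactly the binomial bookkeeping you flagged as the main obstacle. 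The gradedness and extremal-element parts of your argument, via $\phi$, Lemma~\ref{lem:skew-rank-phi}, and Propositions~\ref{prop:bot-elt-skew} and~\ref{prop:top-elt-skew}, are the same as the paper's.
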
 
\begin{proof}
    Since the poset is ranked by the number of inversions, we need only 
    compute the maximum number of inversions possible in a standard 
    immaculate tableau. We proceed by inclusion-exclusion.\\
    The maximum number of inversions is the number of pairs that can appear 
    in any order. The total number of pairs is $\svw{\binom{|\alpha / \beta|}{2}}$, from which we 
    need to subtract pairs that need to be ordered. To count these pairs, define a \textit{hook} as the collection of the \svw{cells} from $\{(1,1), \ldots, (i-1, 1), (i,1), (i,2), \ldots, (i, \alpha_i)\}$ that appear in the skew diagram $\alpha/\beta$, \svw{where $(i,j)$ refers to the cell or hole in row $i$ and column $j$, and row 1 is the bottom row.} The pairs that need to be ordered are then the ones that 
    appear in the same hook, and note that the hook for row $i$ is just the entries in row $i$ if the cell $(i,1)$ does not belong to $\alpha/\beta$.
    For the hook with the cell $(i,1)$, there are 
    $\binom{\alpha_i+i-\ell(\beta)-1}{2}$ such pairs, since the hook 
    contains $i-\ell(\beta)$ cells in the column and $\alpha_i$ cells in 
    the row, including the cell that belongs to both the row and the 
    column. Similarly, the $\ell(\beta)$ rows that have no cell in the 
    first column have $\sum_{i=1}^{\ell(\beta)}\binom{\alpha_i - 
    \beta_i}{2}$ pairs that need to be ordered, since there are $\alpha_i - 
    \beta_i$ entries in row $i$.
    
    Finally, the pairs of elements that are both in the first column are 
    overcounted by this process, since they may belong to more than one 
    hook. To ensure that those pairs are subtracted only once, we add back 
    the number of pairs strictly below row $i$, for each $\ell(\beta)+1\leq 
    i\leq \ell(\alpha)$:
    \[\sum_{i=\ell(\beta)+1}^{\ell(\alpha)} \binom{i-1-\ell(\beta)}{2} 
    =\sum_{i=1}^{\ell(\alpha) - \ell(\beta)} \binom{i-1}{2} = 
    \binom{\ell(\alpha) - \ell(\beta)}{3}.   \qedhere\]
\end{proof}

 Now we turn our attention back to skew standard extended tableaux.  Recall from \Cref{prop:skew-SET-module} that the subset $\mathrm{span}\{T\in \SIT(\alpha/\beta):T\in\SET(\alpha/\beta\}$  is a 0-Hecke module for both the  $\rdI$-action and   the $\mathcal{A}^*$-action.

\begin{proposition}\label{prop:skew-SET-module-top-element} Let $\alpha ,\beta $ be compositions with $\beta \subseteq \alpha$. Then the subposet of $P\rdI_{\alpha/\beta}$ whose elements are $\SET(\alpha/\beta)$ has a unique maximal element, namely the unique maximal element $S^{row}_{\alpha/\beta}$ of $P\rdI_{\alpha/\beta}$. This  subposet is ranked, with the same rank function as $P\rdI_{\alpha/\beta}$.  For the $\rdI$-action or  the $\mathcal{A}^*$-action, the  minimal elements of the subposet   generate  $\mathrm{span}\{T :T\in\SET(\alpha/\beta\}$ as a \svw{submodule} of  $\mathcal{V}_{\alpha/\beta}$ or  $\mathcal{A}_{\alpha/\beta}$, respectively.
\end{proposition}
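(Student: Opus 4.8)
The plan is to deduce all three assertions from the single structural fact, established in Proposition~\ref{prop:skew-SET-module}, that $\SET(\alpha/\beta)$ is closed under both the $\rdI$-action and the $\mathcal{A}^*$-action, combined with the inversion grading of the ambient poset $P\rdI_{\alpha/\beta}$ from Proposition~\ref{prop:poset-rank-Nadia}. First I would check that $S^{row}_{\alpha/\beta}\in\SET(\alpha/\beta)$: since its rows are filled with consecutive blocks of integers read bottom to top (Definition~\ref{def:skewels}), any cell strictly above another in the same column lies in a later, hence larger, block, so every column increases from bottom to top. By Proposition~\ref{prop:top-elt-skew}, $S^{row}_{\alpha/\beta}$ is the unique maximal element of the whole poset $P\rdI_{\alpha/\beta}$; as it lies in the induced subposet on $\SET(\alpha/\beta)$, every $T\in\SET(\alpha/\beta)$ satisfies $T\poRI S^{row}_{\alpha/\beta}$ in the subposet, so it is the unique maximal element there as well.

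The key step is the ranking. Suppose $S\poRI T$ holds in the subposet, with $S,T\in\SET(\alpha/\beta)$. By the definition of the order there is a sequence of generators realising $T=\pi_{j_1}^{\rdI}\cdots\pi_{j_r}^{\rdI}(S)$; applying these generators successively to $S$ (from the right) produces a sequence of nonzero tableaux, each of which lies in $\SET(\alpha/\beta)$ by the closure property of Proposition~\ref{prop:skew-SET-module}. Discarding the steps that act as the identity yields a saturated chain from $S$ to $T$ lying entirely in $\SET(\alpha/\beta)$, each step being a genuine swap $\pi_i^{\rdI}$ and hence a cover relation of $P\rdI_{\alpha/\beta}$ that raises $\inv$ by exactly $1$. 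Consequently any cover $S\poRIcover T$ of the subposet must satisfy $\inv(T)=\inv(S)+1$, since otherwise the chain just constructed would furnish an intermediate element of $\SET(\alpha/\beta)$. Thus the restriction of the rank function $T\mapsto\inv(T)-\inv(S^0_{\alpha/\beta})$ of Proposition~\ref{prop:poset-rank-Nadia} grades the subposet. I expect this to be the main obstacle, since it is precisely the point where one must rule out ``long'' cover relations created by passing to the induced subposet, and it relies essentially on the closure of $\SET(\alpha/\beta)$ under the action.

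Finally, to see that the minimal elements of the subposet generate $M=\mathrm{span}\{T:T\in\SET(\alpha/\beta)\}$ as an $H_{n-m}(0)$-module for the $\rdI$-action, let $N\subseteq M$ be the submodule they generate and argue by induction on $\inv(T)$ that every $T\in\SET(\alpha/\beta)$ lies in $N$. Minimal elements lie in $N$ by definition. If $T$ is not minimal, choose $S\in\SET(\alpha/\beta)$ with $S\poRIcover T$ in the subposet; by the previous paragraph this is a cover of $P\rdI_{\alpha/\beta}$, so $T=\pi_i^{\rdI}(S)=s_i(S)$ for some $i$ with $\inv(S)=\inv(T)-1$. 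By induction $S\in N$, whence $T=\pi_i^{\rdI}(S)\in N$ because $N$ is a submodule. Therefore $N=M$. The statement for the $\mathcal{A}^*$-action then follows verbatim: by Proposition~\ref{prop:skew-partial-order} the $\rdI$- and $\mathcal{A}^*$-actions determine the same poset $P\rdI_{\alpha/\beta}$ with the same swap moves, and $\SET(\alpha/\beta)$ is likewise closed under the $\mathcal{A}^*$-action by Proposition~\ref{prop:skew-SET-module}, so the identical induction applies with $\pi_i^{\mathcal{A}^*}$ in place of $\pi_i^{\rdI}$.
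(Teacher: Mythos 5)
Your proof is correct and follows essentially the same route as the paper: it combines the closure of $\SET(\alpha/\beta)$ under the $\rdI$- and $\mathcal{A}^*$-actions (Proposition~\ref{prop:skew-SET-module}) with the straightening algorithm of Proposition~\ref{prop:top-elt-skew} and the inversion grading of Proposition~\ref{prop:poset-rank-Nadia}. The paper's proof is just a two-sentence citation of these ingredients; your write-up merely makes explicit the key point it leaves implicit, namely that closure forces every cover of the subposet to be a cover of the ambient poset, which is exactly what yields the ranking and the generation by minimal elements.
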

\begin{proof}   
Combining  \Cref{prop:skew-SET-module} with the straightening algorithm of  \Cref{prop:top-elt-skew},  shows that 
the subposet whose elements are $\SET(\alpha/\beta)$ has a  unique maximal element $S^{row}_{\alpha/\beta}\in \SET(\alpha/\beta)$. 
\svw{The last two statements follow from \Cref{prop:skew-SET-module}.}  \end{proof}
 
When $\beta\ne \emptyset$, the subposet $\SET(\alpha/\beta)$ 
 need not have a  unique minimal element, as \Cref{fig:SkewSETPoset} shows.  

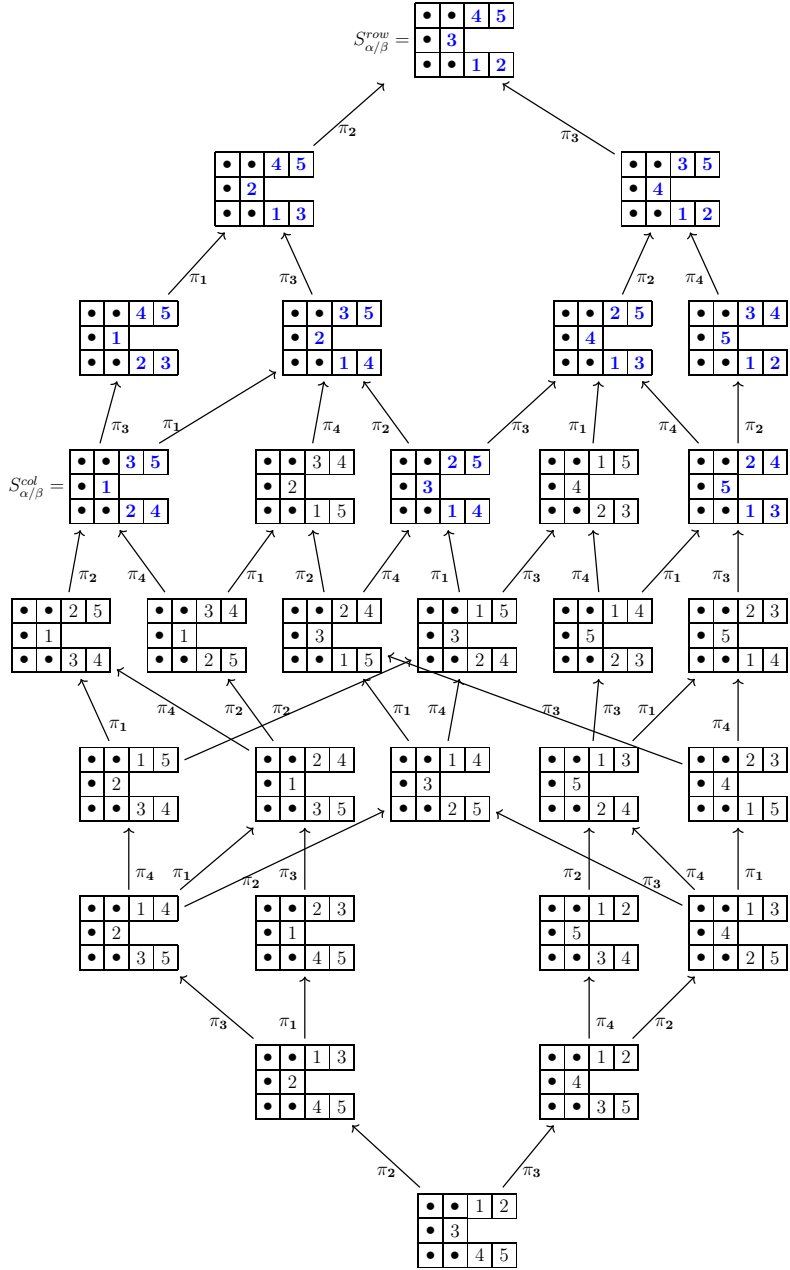
\begin{figure}[htb] \centering
\begin{center}
    \scalebox{.6}{			
        \begin{tikzpicture}
            \newcommand*{\xdist}{*3}
            \newcommand*{\ydist}{*2.2}
            \node (m41) at (0\xdist,-6.5\ydist) 
            {$\tableau{ 
            \bullet &\bullet & 1 & 2 \\ 
            \bullet &  3\\
            \bullet & \bullet &  4 & 5}  $
            } ;
            \node (m31) at (-1.2\xdist,-5\ydist) 
            {$\tableau{ 
            \bullet &\bullet & 1 & 3 \\ 
            \bullet &  2\\
            \bullet & \bullet &  4 & 5}  $
            } ;
            \node (m32) at (0.9\xdist,-5\ydist) 
            {$\tableau{ 
            \bullet &\bullet &1 & 2 \\ 
            \bullet &   4\\
            \bullet & \bullet &  3 & 5}  $
            } ;
            \node (m21) at (-2.5\xdist,-3.5\ydist) 
            {$\tableau{ 
            \bullet &\bullet & 1  & 4 \\ 
            \bullet &  2 \\
            \bullet & \bullet &  3 & 5}$ 
            };
            \node (m22) at (-1.2\xdist,-3.5\ydist) 
            {$\tableau{ 
            \bullet &\bullet & 2 & 3 \\ 
            \bullet &  1\\
            \bullet & \bullet &  4 & 5}  $
            } ;
            \node (m23) at (0.9\xdist,-3.5\ydist) 
            {$\tableau{ 
            \bullet &\bullet &1 & 2 \\ 
            \bullet &   5\\
            \bullet & \bullet &  3 & 4}  $
            } ;
            \node (m24) at (2\xdist,-3.5\ydist) 
            {$\tableau{ 
            \bullet &\bullet & 1 & 3 \\ 
            \bullet &   4\\
            \bullet & \bullet &  2 & 5}  $
            } ;
            \node (m11) at (-2.5\xdist,-2\ydist) 
            {$\tableau{ 
            \bullet &\bullet & 1  & 5 \\ 
            \bullet &  2 \\
            \bullet & \bullet &  3 & 4}$ 
            };
            \node (m12) at (-1.2\xdist,-2\ydist) 
            {$\tableau{ 
            \bullet &\bullet & 2 & 4 \\ 
            \bullet &  1\\
            \bullet & \bullet &  3 & 5}  $
            } ;
            \node (m13) at (-0.2\xdist,-2\ydist) 
            {$\tableau{ 
            \bullet &\bullet &1 & 4 \\ 
            \bullet &   3\\
            \bullet & \bullet &  2 & 5}  $
            } ;
            \node (m14) at (0.9\xdist,-2\ydist) 
            {$\tableau{ 
            \bullet &\bullet &1 & 3 \\ 
            \bullet &   5\\
            \bullet & \bullet &  2 & 4}  $
            } ;
            \node (m15) at (2\xdist,-2\ydist) 
            {$\tableau{ 
            \bullet &\bullet & 2 & 3 \\ 
            \bullet &   4\\
            \bullet & \bullet &  1 & 5}  $
            } ;
            \node (m01) at (-3\xdist,-0.5\ydist) 
            {$\tableau{ 
                    \bullet &\bullet & 2  & 5 \\ 
                    \bullet &  1 \\
                    \bullet & \bullet &  3 & 4}$ 
            };
            \node (m02) at (-2\xdist,-0.5\ydist) 
            {$\tableau{ 
                    \bullet &\bullet & 3 & 4 \\ 
                    \bullet &  1\\
                    \bullet & \bullet &  2 & 5}  $
            } ;
            \node (m03) at (-1\xdist,-0.5\ydist) 
            {$\tableau{ 
                    \bullet &\bullet &2 & 4 \\ 
                    \bullet &   3\\
                    \bullet & \bullet &  1 & 5}  $
            } ;
            \node (m04) at (0\xdist,-0.5\ydist) 
            {$\tableau{ 
                    \bullet &\bullet &1 & 5 \\ 
                    \bullet &   3\\
                    \bullet & \bullet &  2 & 4}  $
            } ;
            \node (m05) at (1\xdist,-0.5\ydist) 
            {$\tableau{ 
                    \bullet &\bullet & 1 & 4 \\ 
                    \bullet &   5\\
                    \bullet & \bullet &  2 & 3}  $
            } ;
            \node (m06) at (2\xdist,-0.5\ydist) 
            {$\tableau{ 
                    \bullet &\bullet &   2&   3\\ 
                    \bullet & 5 \\
                    \bullet & \bullet &  1 & 4}  $
            } ;
            \node (n11) at (-2.8\xdist,1\ydist) 
            {$  S^{col}_{\alpha/\beta}=\tableau{ 
                    \bullet &\bullet & \textbf{\tcblue{3}}  & \textbf{\tcblue{ 5}} \\ 
                    \bullet & \textbf{\tcblue{ 1}} \\
                    \bullet & \bullet & \textbf{\tcblue{ 2}} &\textbf{\tcblue{ 4}} }$
            };
            \node (n112) at (-1.2\xdist,1\ydist) 
            {$\tableau{ 
                    \bullet &\bullet & 3 & 4 \\ 
                    \bullet &  2\\
                    \bullet & \bullet &  1 & 5}  $
            } ;
            \node (n13) at (-0.2\xdist,1\ydist) 
            {$\tableau{ 
                    \bullet &\bullet &\textbf{\tcblue{ 2}} &\textbf{\tcblue{ 5}} \\ 
                    \bullet &  \textbf {\tcblue{3}}\\
                    \bullet & \bullet & \textbf{\tcblue{ 1}} &\textbf{\tcblue{ 4}}}  $
            } ;
            \node (n122) at (0.9\xdist,1\ydist) 
            {$\tableau{ 
                    \bullet &\bullet & 1 & 5 \\ 
                    \bullet &   4\\
                    \bullet & \bullet &  2 & 3}  $
            } ;
            \node (n12) at (2\xdist,1\ydist) 
            {$\tableau{ 
                    \bullet &\bullet &\textbf{\tcblue{   2}}&\textbf {\tcblue{  4}}\\ 
                    \bullet &\textbf {\tcblue{5}} \\
                    \bullet & \bullet & \textbf{\tcblue{ 1}} &\textbf{\tcblue{ 3}}}  $
            } ;
            \node (n31) at (-1\xdist,2.5\ydist) 
            {$\tableau{ 
                    \bullet &\bullet &\textbf{\tcblue{ 3}} &\textbf{\tcblue{   5}} \\ 
                    \bullet &\textbf{\tcblue{ 2}} \\
                    \bullet & \bullet & \textbf{\tcblue{ 1}} &\textbf{\tcblue{ 4}}}  $
            } ;
            \node (n32) at (-2.5\xdist,2.5\ydist) 
            {$\tableau{ 
                    \bullet &\bullet & \textbf {\tcblue{ 4}} &\textbf{\tcblue{   5}} \\ 
                    \bullet &\textbf{\tcblue{   1}}\\
                    \bullet & \bullet & \textbf{\tcblue{ 2}} &\textbf{\tcblue{ 3}}}  $
            } ;
            \node (n33) at (2\xdist,2.5\ydist) 
            {$\tableau{ 
                    \bullet &\bullet &\textbf{\tcblue{  3}} &\textbf{\tcblue{ 4}} \\ 
                    \bullet &\textbf{\tcblue{5}} \\
                    \bullet & \bullet & \textbf{\tcblue{ 1}} &\textbf{\tcblue{ 2}}}  $
            } ;
            \node (n34) at (1\xdist,2.5\ydist) 
            {$\tableau{ 
                    \bullet &\bullet &\textbf{\tcblue{ 2}} &\textbf {\tcblue{ 5}} \\ 
                    \bullet &\textbf{\tcblue{ 4}} \\
                    \bullet & \bullet & \textbf{\tcblue{ 1}} &\textbf{\tcblue{ 3}}}  $
            } ;
            \node (n71) at (-1.5\xdist,4\ydist) 
            {$\tableau{ 
                    \bullet &\bullet & \textbf{\tcblue{  4}} &\textbf{\tcblue{   5}} \\ 
                    \bullet &\textbf{\tcblue{ 2}} \\
                    \bullet & \bullet & \textbf{\tcblue{ 1}} &\textbf{\tcblue{ 3}}}  $
            } ;
            \node (n72) at (1.5\xdist,4\ydist) 
            {$\tableau{ 
                    \bullet &\bullet &\textbf{\tcblue{ 3}} &\textbf{\tcblue{   5}} \\ 
                    \bullet &\textbf {\tcblue{4}} \\
                    \bullet & \bullet & \textbf{\tcblue{ 1}} &\textbf{\tcblue{ 2}}}  $
            } ;
            \node (n8) at (-0.25\xdist,5.5\ydist) 
            {  ${S^{row}_{\alpha/\beta} =
                    \tableau{ 
                        \bullet &\bullet & \textbf{\tcblue{  4}} &\textbf{\tcblue{   5}} \\ 
                        \bullet &\textbf{\tcblue{3}} \\
                        \bullet & \bullet & \textbf{\tcblue{ 1}} &\textbf{\tcblue{ 2}}} }  $ 
            }; 
            
            
            \draw [thick, ->] (n71) -- (n8) node [near start, right] 
            {$\bf \pi_{2}$}; 
            \draw [thick, ->] (n72) -- (n8) node [near start, left] 
            {$\bf \pi_{3}$};

            \draw [thick, ->] (n11) -- (n31) node [near start, left] 
            {$\bf \pi_{1}$}; 
            \draw [thick, ->] (n11) -- (n32) node [near start, right] 
            {$\bf \pi_{3}$};
            
            \draw [thick, ->] (n12) -- (n33) node [near start, right] 
            {$\bf \pi_{2}$}; 
            \draw [thick, ->] (n12) -- (n34) node [near start, left] 
            {$\bf \pi_{4}$}; 
            \draw [thick, ->] (n13) -- (n31) node [near start, left] 
            {$\bf \pi_{2}$}; 
            \draw [thick, ->] (n13) -- (n34) node [near start, right] 
            {$\bf \pi_{3}$}; 
            
            \draw [thick, ->] (n31) -- (n71) node [near start, left] 
            {$\bf \pi_{3}$}; 
            \draw [thick, ->] (n32) -- (n71) node [near start, right] 
            {$\bf \pi_{1}$}; 
            
            \draw [thick, ->] (n33) -- (n72) node [near start, left] 
            {$\bf \pi_{4}$}; 
            \draw [thick, ->] (n34) -- (n72) node [near start, right] 
            {$\bf \pi_{2}$};

            \draw [thick, ->] (n112) -- (n31) node [near start, right] 
            {$\bf \pi_{4}$}; 
            \draw [thick, ->] (n122) -- (n34) node [near start, left] 
            {$\bf \pi_{1}$};

             \draw [thick, ->] (m01) -- (n11) node [near start, right] 
            {$\bf \pi_{2}$}; 
            \draw [thick, ->] (m02) -- (n11) node [near start, left] 
            {$\bf \pi_{4}$};      
             \draw [thick, ->] (m02) -- (n112) node [near start, right] 
            {$\bf \pi_{1}$}; 
            \draw [thick, ->] (m03) -- (n112) node [near start, left] 
            {$\bf \pi_{2}$};      
             \draw [thick, ->] (m03) -- (n13) node [near start, right] 
            {$\bf \pi_{4}$}; 
            \draw [thick, ->] (m04) -- (n13) node [near start, left] 
            {$\bf \pi_{1}$};      
             \draw [thick, ->] (m04) -- (n122) node [near start, right] 
            {$\bf \pi_{3}$}; 
            \draw [thick, ->] (m05) -- (n122) node [near start, left] 
            {$\bf \pi_{4}$};      
             \draw [thick, ->] (m05) -- (n12) node [near start, right] 
            {$\bf \pi_{1}$}; 
            \draw [thick, ->] (m06) -- (n12) node [near start, left] 
            {$\bf \pi_{3}$};      
            
             \draw [thick, ->] (m11) -- (m01) node [near start, right] 
            {$\bf \pi_{1}$}; 
            \draw [thick, ->] (m11) -- (m04) node [midway, 
            left] {$\bf \pi_{2}$};      
            \draw [thick, ->] (m12) -- (m01) node [midway, left] 
            {$\bf \pi_{4}$}; 
            \draw [thick, ->] (m12) -- (m02) node [midway, left] 
            {$\bf \pi_{2}$};      
            \draw [thick, ->] (m13) -- (m03) node [midway, right] 
            {$\bf \pi_{1}$}; 
            \draw [thick, ->] (m13) -- (m04) node [midway, left] 
            {$\bf \pi_{4}$};      
            \draw [thick, ->] (m14) -- (m05) node [midway, right] 
            {$\bf \pi_{3}$}; 
            \draw [thick, ->] (m14) -- (m06) node [midway, left] 
            {$\bf \pi_{1}$};      
            \draw [thick, ->] (m15) -- (m03) node [midway,
            right] {$\bf \pi_{3}$}; 
            \draw [thick, ->] (m15) -- (m06) node [near start, left] 
            {$\bf \pi_{4}$};

             \draw [thick, ->] (m21) -- (m11) node [near start, right] 
            {$\bf \pi_{4}$}; 
            \draw [thick, ->] (m21) -- (m12) node [near start, left] 
            {$\bf \pi_{1}$};      
            \draw [thick, ->] (m21) -- (m13) node [near start, right] 
            {$\bf \pi_{2}$}; 
            \draw [thick, ->] (m22) -- (m12) node [near start, left] 
            {$\bf \pi_{3}$};      
            \draw [thick, ->] (m23) -- (m14) node [near start, left] 
            {$\bf \pi_{2}$}; 
            \draw [thick, ->] (m24) -- (m13) node [near start, right] 
            {$\bf \pi_{3}$};  
            \draw [thick, ->] (m24) -- (m14) node [near start, right] 
            {$\bf \pi_{4}$};     
            \draw [thick, ->] (m24) -- (m15) node [near start, right] 
            {$\bf \pi_{1}$}; 
             
             \draw [thick, ->] (m31) -- (m21) node [near start, left] 
             {$\bf \pi_{3}$};      
             \draw [thick, ->] (m31) -- (m22) node [near start, left] 
             {$\bf \pi_{1}$}; 
             \draw [thick, ->] (m32) -- (m23) node [near start, right] 
             {$\bf \pi_{4}$};      
             \draw [thick, ->] (m32) -- (m24) node [near start, right] 
             {$\bf \pi_{2}$}; 
             
             \draw [thick, ->] (m41) -- (m31) node [near start, left] 
             {$\bf \pi_{2}$};      
             \draw [thick, ->] (m41) -- (m32) node [near start, right] 
             {$\bf \pi_{3}$};

        \end{tikzpicture}	
   }			
\end{center}
\caption{\svw{The poset $P\rdI_{\alpha/\beta}$} for $\alpha=(4,2,4), \beta=(2,1,2)$.  The subposet \svw{whose elements are} $\SET(\alpha/\beta)$,  with three minimal elements, is  \svw{in bold} blue.}\label{fig:SkewSETPoset}
\end{figure}

\begin{example}\label{ex:Scol-not-minimal-for-SET1} Let $\alpha=(2,3,2)$, $\beta=(1,2,1)$. Then 

\[\SET(\alpha/\beta)=\left\{S^{col}_{\alpha/\beta}=\tableau{\bullet  & 2\\ \bullet & \bullet &3\\ \bullet &1},\quad T_1=\tableau{\bullet  &3\\ \bullet &\bullet &1\\ \bullet &2}, \quad T_2= S^{row}_{\alpha/\beta}=\tableau{\bullet  &3\\ \bullet &\bullet &2\\ \bullet &1}\right \}.\]
Note that $\pi_1(T_1)=T_2=\pi_2(S^{col}_{\alpha/\beta})$, but $T_1$ and $S^{col}_{\alpha/\beta}$ are incomparable and are thus minimal elements of 
$\SET(\alpha/\beta)$.

\end{example}

The next example shows that, although $S^{row}_{\alpha/\beta}$ is always the maximal element, 
 $S^{col}_{\alpha/\beta}$ need not be a minimal element of $\SET(\alpha/\beta)$.

\begin{example}\label{ex:Scol-not-minimal-for-SET2} Let $\alpha=(2,3)$, $\beta=(1,2)$. Then 
\[\SIT(\alpha/\beta)=\SET(\alpha/\beta)=\left\lbrace T=\tableau{\bullet &\bullet & 1\\ \bullet &2}, \quad S^{col}_{\alpha/\beta}=S^{row}_{\alpha/\beta}=\tableau{\bullet &\bullet &2\\ \bullet &1}\right\rbrace. \]

Here $\pi_1(T)=S^{col}_{\alpha/\beta}=S^{row}_{\alpha/\beta}$ and so $T$ is the minimal element. 

\end{example}

In analogy with \cite[Definition 7.14]{NSvWVW2024}, we make the following definition. 

\begin{definition}\label{def:skew-NSET} Let $\alpha, \beta$ be compositions with $\beta \subseteq \alpha$ and $\mathrm{NSET}(\alpha/\beta)$ be the subset of all skew standard  tableaux  of shape $\alpha/\beta$ in which all  rows increase left to right, but at least one {column} does NOT increase from bottom to top. 
\end{definition}

We now observe that, as in \cite[Proposition 7.25]{NSvWVW2024},  the poset \svw{$P\rdI _{\alpha/\beta}$} also gives a quotient module of  $\mathcal{W}_{\alpha/\beta}$ with characteristic equal to the extended Schur function $\mathcal{E}_{\alpha/\beta}$. 
In  Figure~\ref{fig:SkewSETPoset}, by reversing the arrows, one sees that  the tableaux that are NOT in $\SET(\alpha/\beta)$ form a 
  closed subset under the $\dI$-action.

\begin{proposition}\label{prop:skew-shin-module} Let $\alpha, \beta$ be compositions with $\beta \subseteq \alpha$. Then $\NSET(\alpha/\beta)\cap\SIT(\alpha/\beta)$ is a basis for a  \svw{submodule} $\mathcal{Y}_{\alpha/\beta}$ of $\mathcal{W}_{\alpha/\beta}$ for the $\dI$-action.   The resulting quotient module 
$\mathcal{W}_{\alpha/\beta}/\mathcal{Y}_{\alpha/\beta}$ has basis of cosets represented by the set $\SET(\alpha/\beta)$,  and is cyclically generated by (the coset represented by) $S^{row}_{\alpha/\beta}$.  A similar statement holds for the $\bA^*$-action. Their quasisymmetric characteristics are respectively $\mathcal{E}_{\alpha/\beta}$ and $\bA ^* _{\alpha/\beta}$.
\end{proposition}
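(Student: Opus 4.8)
The plan is to make the submodule $\mathcal{Y}_{\alpha/\beta}$ explicit and then reduce the only nontrivial claim to a closure result already in hand. Since every $T\in\SIT(\alpha/\beta)$ either lies in $\SET(\alpha/\beta)$ (all columns increase) or has at least one non-increasing column, the proposed basis $\NSET(\alpha/\beta)\cap\SIT(\alpha/\beta)$ is exactly $\SIT(\alpha/\beta)\setminus\SET(\alpha/\beta)$. Thus I set $\mathcal{Y}_{\alpha/\beta}=\spam\{T:T\in\SIT(\alpha/\beta)\setminus\SET(\alpha/\beta)\}$, and the whole statement rests on showing this subspace is stable under the $\dI$-action. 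Because each generator sends a basis tableau $T$ to $0$, to $T$, or to $s_i(T)$, the only case to check is $\pi^{\dI}_i(T)=s_i(T)=:S\in\SIT(\alpha/\beta)$ with $S\ne T$, and I must show $S\notin\SET(\alpha/\beta)$.

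The key step is the poset translation. A nonzero, non-identity move $S=\pi^{\dI}_i(T)$ is by definition a cover $S\prec_{\dI_{\alpha/\beta}}T$, and by Lemma~\ref{lem:sameposet-skew} this is the same relation as $S\prec_{\rdI_{\alpha/\beta}}T$; hence $T=\pi^{\rdI}_j(S)$ for some $j$. Now suppose for contradiction that $S\in\SET(\alpha/\beta)$. By Proposition~\ref{prop:skew-SET-module}, $\SET(\alpha/\beta)$ is closed under the $\rdI$-action, so $T=\pi^{\rdI}_j(S)\in\SET(\alpha/\beta)$, contradicting $T\notin\SET(\alpha/\beta)$. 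Therefore $S\in\SIT(\alpha/\beta)\setminus\SET(\alpha/\beta)$, and $\mathcal{Y}_{\alpha/\beta}$ is a $\dI$-submodule of $\mathcal{W}_{\alpha/\beta}$. Equivalently, on reversing the arrows of $P\rdI_{\alpha/\beta}$ the complement of $\SET(\alpha/\beta)$ is downward closed, as anticipated in the remark preceding the proposition.

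Next I would read off the quotient. Since $\SIT(\alpha/\beta)=\SET(\alpha/\beta)\sqcup\bigl(\SIT(\alpha/\beta)\setminus\SET(\alpha/\beta)\bigr)$, the cosets $\{T+\mathcal{Y}_{\alpha/\beta}:T\in\SET(\alpha/\beta)\}$ form a basis of $\mathcal{W}_{\alpha/\beta}/\mathcal{Y}_{\alpha/\beta}$. By Theorem~\ref{thm-dI-skew}, $\mathcal{W}_{\alpha/\beta}$ is cyclic with generator $S^{row}_{\alpha/\beta}$; applying the surjective quotient map shows the coset of $S^{row}_{\alpha/\beta}$ (a legitimate basis vector, as $S^{row}_{\alpha/\beta}\in\SET(\alpha/\beta)$) generates the quotient. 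For the characteristic I would use additivity of $\chr$ along $0\to\mathcal{Y}_{\alpha/\beta}\to\mathcal{W}_{\alpha/\beta}\to\mathcal{W}_{\alpha/\beta}/\mathcal{Y}_{\alpha/\beta}\to0$: since $\chr(\mathcal{W}_{\alpha/\beta})=\sum_{T\in\SIT(\alpha/\beta)}F_{\comp(\Des_{\dI}(T))}$ by Theorem~\ref{the:4skewmodules}, and $\chr(\mathcal{Y}_{\alpha/\beta})=\sum_{T\notin\SET(\alpha/\beta)}F_{\comp(\Des_{\dI}(T))}$ by Proposition~\ref{prop:poset-to-composition-serieS}(1), the difference is $\sum_{T\in\SET(\alpha/\beta)}F_{\comp(\Des_{\dI}(T))}=\mathcal{E}_{\alpha/\beta}$ by Theorem~\ref{thm:8flavours-skew-tableaux}(3).

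The $\bA^*$-action is handled verbatim: by Remark~\ref{rem:dI-action and barA-action} it determines the same poset $P\dI_{\alpha/\beta}$, so a nonzero move $S=\pi^{\bA^*}_i(T)$ again gives $S\prec_{\rdI_{\alpha/\beta}}T$ and the identical contradiction applies, producing the submodule and a quotient with basis $\SET(\alpha/\beta)$ cyclically generated by $S^{row}_{\alpha/\beta}$, of characteristic $\sum_{T\in\SET(\alpha/\beta)}F_{\comp(\Des_{\bA^*}(T))}=\bA^*_{\SET(\alpha/\beta)}$ by Theorem~\ref{thm:8flavours-skew-tableaux}(8). I expect the one genuinely delicate point to be the translation in the second paragraph: one must be sure that a nonzero, non-identity application of $\pi^{\dI}_i$ (resp.\ $\pi^{\bA^*}_i$) is exactly a cover relation, and that Lemma~\ref{lem:sameposet-skew} legitimately lets me pass from a downward $\dI$-move to an upward $\rdI$-move on the \emph{same} poset. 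Once that identification is secured, closure of $\SET(\alpha/\beta)$ under the $\rdI$-action from Proposition~\ref{prop:skew-SET-module} does all the remaining work, and everything else is bookkeeping.
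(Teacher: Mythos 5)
Your proposal is correct and takes essentially the same route as the paper: the heart of both arguments is that a nonzero, non-identity $\dI$-move reverses an $\rdI$-move (\Cref{lem:sameposet-skew}), so closure of $\SET(\alpha/\beta)$ under the $\rdI$-action (\Cref{prop:skew-SET-module}) forces closure of its complement $\NSET(\alpha/\beta)\cap\SIT(\alpha/\beta)$ under the $\dI$-action. The bookkeeping differs only mildly: for cyclicity the paper identifies the coset of $S^{row}_{\alpha/\beta}$ as the unique minimal element of the $\SET(\alpha/\beta)$ subposet in the dual order (via \Cref{prop:top-elt-skew}), whereas you use that a quotient of the cyclic module $\mathcal{W}_{\alpha/\beta}$ (\Cref{thm-dI-skew}) is cyclic, and you spell out the characteristic computation that the paper defers to the non-skew reference; both substitutions are valid. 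One genuine finding in your write-up: your computation yields the $\bA^*$-quotient characteristic $\bA^*_{\SET(\alpha/\beta)}=\sum_{T\in\SET(\alpha/\beta)}F_{\comp(\Des_{\bA^*}(T))}$, which matches the non-skew pattern in \Cref{table:All4Imm}, and which is \emph{not} the function $\bA^*_{\alpha/\beta}$ named in the proposition statement --- by \Cref{thm:8flavours-skew-tableaux} that is a sum over all of $\SIT(\alpha/\beta)$, so it has too many terms. The statement's second characteristic appears to be a typo for $\bA^*_{\SET(\alpha/\beta)}$, and your derivation gives the correct function.
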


\begin{proof} First  recall  from \Cref{lem:sameposet-skew} that  the $\dI$-action is \svw{obtained from} the $\rdI$-action in the 0-Hecke poset \svw{by reversing the arrows.}
Proposition~\ref{prop:skew-SET-module} tells us that  the subspace spanned by $\NSET(\alpha/\beta)$ is closed under the $\dI$-action, since 
$T=\pi _i ^{\dI}(S) \iff S=\pi_i^{\rdI}(T) $; hence if $S\notin \SET(\alpha/\beta)$, then \svw{$T\notin \SET(\alpha/\beta)$.} 
Now the key observation  is that by \Cref{prop:top-elt-skew}, the subposet of $P\rdI_{\alpha/\beta}$ consisting of $\SET(\alpha/\beta)$ has the unique maximal element $S^{row}_{\alpha/\beta }$ by its definition, which is thus the unique minimal element for the dual poset $P\dI_{\alpha/\beta }$.  This is therefore the cyclic generator of the quotient module for the $\dI$-action. 
The proof then follows as in  \cite[Proposition~7.25]{NSvWVW2024}.
\end{proof}

We conclude with the analogue of Theorem~\ref{thm:Skew-branch-all4flavours} for $\SET(\alpha)$.

\begin{theorem}\label{thm:Skew-branch-SET-all4flavours} Let $\alpha\vDash n$, and  let $\mathcal{Z}_\alpha$ denote the $\hn$-module defined by any one of the four actions  on the  set $\SET(\alpha)$ arising from  the four descent sets $\Des_{\dI}$, $\Des_{\rdI}$, $\Des_{\mathcal{A}^*}$, $\Des_{\bA^*}$, where for $\Des_{\dI}$, $\Des_{\bA^*}$ they are quotient actions.
 Then for fixed $m\leq n$ we have 
\[\mathcal{Z}_{\alpha}\big\downarrow^{\hn}_{H_m(0)\,\otimes\, H_{n-m}(0)} = 
\bigoplus_{\substack{\beta\vDash m  \\ \beta\subseteq\alpha}} \,
\mathcal{Z}_\beta \otimes \mathcal{Z}_{\alpha/\beta}.
\quad\text{ In particular, }\quad
\mathcal{Z}_\alpha\big\downarrow^{\hn}_{H_{n-1}(0)} \cong \bigoplus_{\substack{\beta\vDash n-1  \\ \beta\subset\alpha}} \mathcal{Z}_\beta . \]
\end{theorem}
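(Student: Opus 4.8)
The plan is to mirror the proof of \Cref{thm:Skew-branch-all4flavours}, with $\SET$ in place of $\SIT$ throughout. Fix $m\le n$. I would restrict $\mathcal{Z}_\alpha$ to $H_{m,n-m}(0)$ and look for a set-level decomposition $\SET(\alpha)=\bigsqcup_{\beta\vDash m,\ \beta\subseteq\alpha} X^{\SET}_{\alpha,\beta}$, where $X^{\SET}_{\alpha,\beta}=\{T\in\SET(\alpha):\mathrm{shape}(T_{>m})=\alpha/\beta\}$, together with an $H_m(0)\otimes H_{n-m}(0)$-module isomorphism $\mathrm{span}\,X^{\SET}_{\alpha,\beta}\cong \mathrm{span}\,\SET(\beta)\otimes \mathrm{span}\,\SET(\alpha/\beta)$ induced by the splitting $\theta(T)=T_{\le m}\otimes T_{>m}$ of \eqref{eqn:Straight-to-skew-iso}. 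Summing over $\beta$ and reading off characteristics from \Cref{thm:8flavours-skew-tableaux} would yield the restriction formula, and the case $m=n-1$ would drop out from $H_1(0)\cong\mathbb{C}$.

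Two ingredients transfer essentially verbatim. For equivariance of $\theta$: a generator $\pi_i$ with $i\le m-1$ moves only entries $\le m$ and so acts within $T_{\le m}$, while $\pi_{m+j}$ moves only entries $>m$ and acts within $T_{>m}$; since $\SET$ is closed under the $\rdI$- and $\mathcal{A}^*$-actions by \Cref{prop:skew-SET-module}, the argument of \Cref{Hecke-Subgp-action} applies unchanged to show $\theta$ is $H_{m,n-m}(0)$-equivariant. The $\dI$- and $\bA^*$-cases, being quotient actions, would then follow by reversing the arrows, i.e.\ by passing to the quotients $\mathcal{W}_\alpha/\mathcal{Y}_\alpha$ as in \Cref{prop:skew-shin-module} (equivalently, by applying $\psi$).

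The main obstacle is the set-level decomposition, and precisely the surjectivity of the gluing that recombines $(U,V)\in\SET(\beta)\times\SET(\alpha/\beta)$ into a filling of $\alpha$. The map $T\mapsto(T_{\le m},T_{>m})$ is always well defined and injective, giving an embedding $\SET(\alpha)\hookrightarrow\bigsqcup_{\beta}\SET(\beta)\times\SET(\alpha/\beta)$ and hence $\dim\mathcal{Z}_\alpha\le\sum_\beta \dim(\mathcal{Z}_\beta\otimes\mathcal{Z}_{\alpha/\beta})$; the real content is that this is an equality. Gluing $U$ beneath $V$ lands in $\SET(\alpha)$ exactly when, in every column, the $\beta$-cells (holding the entries $\le m$) lie below the $(\alpha/\beta)$-cells (holding the entries $>m$), so that the cut respects the bottom-to-top column order. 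This is automatic when $\beta$ is a partition, in particular whenever $\alpha$ is a partition, where the statement collapses to the skew-Schur coproduct compatibly with \Cref{cor:skewextendeds}. For a genuine composition it can break down: if $\beta$ has an ascent $\beta_i<\beta_{i+1}$ with $\alpha_i>\beta_i$, then a $\beta$-cell sits directly above an $(\alpha/\beta)$-cell in some column and the glued filling fails the column-increase condition. For instance, for $\alpha=(3,3)$, $m=3$, $\beta=(1,2)$ the pair $\left(\tableau{2&3\\1},\ \tableau{\bullet&\bullet&3\\ \bullet&1&2}\right)$ has no preimage, so a sum over all $\beta\subseteq\alpha$ overcounts. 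I therefore expect the decisive step to be identifying the correct index set --- the compositions $\beta$ realized as $\mathrm{shape}(T_{\le m})$ for some $T\in\SET(\alpha)$ --- and establishing an extended analogue of the two-variable identity \eqref{eqn:sheila-dI-two-vars} supported on exactly those $\beta$; over that index set the bijection, and with it the module decomposition, would follow as above.
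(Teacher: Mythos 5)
Your proposal correctly reconstructs the proof the paper intends --- the paper in fact gives \emph{no} argument for \Cref{thm:Skew-branch-SET-all4flavours}; it is asserted as the analogue of \Cref{thm:Skew-branch-all4flavours}, whose own proof is in turn deferred to that of \Cref{thm:Skew-branch} --- and the obstruction you located is genuine. But it is a gap in the theorem, not in your reasoning: the statement as printed is false. Your example already shows this at the level of dimensions. For $\alpha=(3,3)$ and $m=3$, the left side has dimension $|\SET((3,3))|=5$, while the right side, summed over $\beta\in\{(3),(2,1),(1,2)\}$, has dimension $1\cdot 1+2\cdot 2+1\cdot 1=6$: no $T\in\SET((3,3))$ has $T_{\le 3}$ of shape $(1,2)$, since placing $2,3$ in cells $(2,1),(2,2)$ forces the entry of cell $(1,2)$ to exceed $3$, so column $2$ cannot increase. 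An even smaller counterexample is $\alpha=(2,2)$, $m=3$, where the two sides have dimensions $2$ and $2\cdot 1+1\cdot 1=3$; since $m=n-1$ there, this also refutes the ``in particular'' clause. Hence no isomorphism of the stated form exists for any of the four actions, and the index set must be restricted, exactly as you suspected.

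Where you are too pessimistic is the final step: no ``extended two-variable identity'' is needed, because the gluing obstruction depends only on the pair of shapes $(\alpha,\beta)$ and not on the fillings. Say $(\alpha,\beta)$ is \emph{column-compatible} if no cell of $\alpha/\beta$ lies strictly below a cell of $\beta$ in the same column; equivalently, $\beta_i\ge\min(\alpha_i,\beta_{i'})$ for all $i<i'\le\ell(\beta)$. If this holds, then gluing \emph{any} $U\in\SET(\beta)$ with \emph{any} shifted $V\in\SET(\alpha/\beta)$ lands in $\SET(\alpha)$, since every column of the glued filling consists of $\beta$-cells with increasing entries $\le m$ lying below $(\alpha/\beta)$-cells with increasing entries $>m$; if it fails, then $X^{\SET}_{\alpha,\beta}=\emptyset$ by the argument above. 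So on each nonempty stratum your map $\theta$ is a bijection, your equivariance argument applies verbatim (via \Cref{prop:skew-SET-module} for the $\rdI$- and $\mathcal{A}^*$-actions, and by passing to the quotients of \Cref{prop:skew-shin-module} for the $\dI$- and $\bA^*$-actions), and one obtains the corrected branching rule
\[
\mathcal{Z}_{\alpha}\big\downarrow^{\hn}_{H_m(0)\,\otimes\, H_{n-m}(0)} \;=\;
\bigoplus_{\substack{\beta\vDash m,\ \beta\subseteq\alpha\\ (\alpha,\beta)\ \text{column-compatible}}}
\mathcal{Z}_\beta \otimes \mathcal{Z}_{\alpha/\beta},
\]
together with the corresponding restriction for $m=n-1$: one keeps only those $\beta$ for which the unique cell of $\alpha/\beta$ has no $\beta$-cell above it in its column (note that $\alpha=(2,2)$, $\beta=(1,2)$ violates this). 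With that amendment to the index set, your outline closes into a complete proof with no further input.
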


\noindent
\textbf{Acknowledgments.} This project was initiated in January 2024 at  the   \textit{Community in Algebraic and Enumerative Combinatorics} workshop, held at the Banff International Research Station (BIRS).   The authors gratefully acknowledge the  support of BIRS, as well as partial support from the Natural Sciences and Engineering Research Council of Canada. \svw{This work was partially supported by NSF grant DMS–2153998.}

\bibliographystyle{plain}
\bibliography{BIRS2024PAPER-FINAL}
\end{document}